\documentclass[sn-mathphys,Numbered]{sn-jnl} 

\usepackage{relsize}
\usepackage{lscape}

\usepackage{mathtools}
\usepackage{cite}
\usepackage{anyfontsize}
\usepackage{graphicx} 
\usepackage{multirow} 
\usepackage{amsmath,amssymb,amsfonts} 
\usepackage{amsthm} 
\usepackage{mathrsfs} 
\usepackage[title]{appendix} 
\usepackage{xcolor} 
\usepackage{float}
\usepackage{textcomp} 
\usepackage{manyfoot} 
\usepackage{booktabs} 
\usepackage{algorithm} 
\usepackage{algorithmicx} 
\usepackage{algpseudocode} 
\usepackage{listings}
\usepackage{array}
\usepackage{caption}
\usepackage{subcaption}
\usepackage[utf8]{inputenc}
\usepackage{booktabs}
\usepackage{amsfonts}
\usepackage{siunitx}
\usepackage{longtable}
\usepackage{eucal}
\usepackage{multirow}
\usepackage{enumerate}
\usepackage{lineno}
\usepackage{framed}
\usepackage{placeins}
\usepackage{cleveref} 
\newtheorem{theorem}{Theorem}[section]
\newtheorem{proposition}{Proposition}[section] 
\newtheorem{lemma}{Lemma}[section]

\newtheorem{remark}{Remark}[section] 
\newtheorem{assumption}{Assumption}[section] 
\newtheorem{definition}{Definition}[section]

\DeclareMathOperator{\argmin}{\textnormal{argmin}}
\begin{document}

\title[Article Title]{Nonlinear Conjugate Gradient Method for Multiobjective Optimization Problems of Interval-Valued Maps}

\author[1]{\fnm{Tapas} \sur{Mondal}}\email{tapas.ra.mat24@itbhu.ac.in}

\author*[1]{\fnm{Debdas} \sur{Ghosh}}\email{debdas.mat@iitbhu.ac.in}

\author[2]{\fnm{Jingxin} \sur{Liu}}\email{jingxinliu9@126.com}

\author[3]{\fnm{Jie} \sur{Li}}\email{lijie1@hljit.edu.cn}

\affil[1]{\orgdiv{Department of Mathematical Sciences}, \orgname{Indian Institute of Technology (BHU)}, \orgaddress{ \city{Varanasi}, \postcode{221005}, \state{Uttar Pradesh}, \country{India}}}

\affil[2]{\orgdiv{School of Mathematical and Physical Sciences, RI-IM·AI*}, \orgname{Chongqing University of Science and Technology}, \orgaddress{ \city{Chongqing}, \postcode{401331}, \country{China}}}

\affil[3]{\orgdiv{School of Civil Architectural Engineering}, \orgname{Heilongjiang Institute of Technology}, \orgaddress{ \city{Heilongjiang}, \postcode{150050}, \country{China}}}

\abstract{In this article, we propose an algorithm for the nonlinear conjugate gradient method to find a Pareto critical point of unconstrained multiobjective interval optimization problems. In this algorithm, we use the Wolfe line search procedure to find the step length. After defining the standard Wolfe conditions and the strong Wolfe conditions, we prove that there exists an interval of the step length that satisfies the standard Wolfe conditions and the strong Wolfe conditions. Further, to study the convergence analysis of our proposed algorithm, we derive the result related to the Zoutendijk condition. In the convergence analysis, first, we prove the global convergence property of our proposed algorithm for a general conjugate gradient algorithmic parameter. Further, we consider four variants of the conjugate gradient algorithmic parameter, such as Fletcher-Reeves, conjugate descent, Dai-Yuan, and modified Dai-Yuan. For each variant of the algorithmic parameter, we prove the global convergence results of our proposed algorithm. Finally, we test our algorithm on some test problems and make a performance profile.}

\keywords{Multiobjective optimization, Interval optimization, Nonlinear conjugate method, Pareto critical points}


\maketitle
\section{Introduction}\label{Introduction}
Multiobjective optimization problems arise naturally in diverse scientific and engineering applications where several conflicting criteria must be optimized simultaneously. Classical formulations typically assume that each objective function is real-valued and can be evaluated precisely at any point in the decision space. However, in many real-world scenarios, the available data are imprecise, uncertain, or subject to measurement errors, leading to objective values that cannot be represented as exact values. Interval analysis offers a systematic framework for handling such uncertainty by modeling each objective value as an interval rather than a single number. Consequently, multiobjective optimization of {\it interval-valued mappings} (IVMs) has gained increasing interest, as it provides informative solutions under incomplete or vague information.

In parallel, gradient-based iterative methods continue to serve as the backbone of large-scale continuous optimization. Among them, nonlinear conjugate gradient methods are particularly attractive due to their simplicity, low memory requirements, and strong convergence properties for unconstrained problems. While nonlinear conjugate gradient methods have been extensively studied for single-objective optimization, extending them to multiobjective settings—especially when objective functions are interval-valued—poses several theoretical and computational challenges. These challenges include defining appropriate notions of descent directions, scalarization principles under interval uncertainty, and generalized optimality conditions that remain meaningful in the presence of multi-dimensional preferences and interval dominance relations.
\subsection{Literature Survey}
A substantial body of literature exists for multiobjective optimization, beginning with early scalarization and Pareto-based approaches (for instance, see \cite{ehrgott2005multicriteria,miettinen1999nonlinear,ghosh2014directed}). Gradient-based methods for multiobjective optimization problems have been developed using descent directions, criticality conditions, and various scalarization strategies (For instance, see \cite{fliege2000steepest,fliege2009newton,assuncao2021conditional,drummond2004projected,lapucci2023limited,mohammadi2024trust,polvaj2014quasi,perez2018nonlinear}).

 More recently, conjugate gradient-type approaches for deterministic multiobjective problems have been proposed, offering improved convergence speed and better performance on large-scale problems. Early progress in this direction is exemplified by the foundational work of Lucambio P{\'e}rez and Prudente \cite{perez2018nonlinear}, who provided one of the first systematic extensions of nonlinear conjugate gradient methods to vector optimization. Their contribution centered on generalizing classical Wolfe and strong Wolfe line search conditions to the multiobjective setting and proving a Zoutendijk-type convergence property for vector-valued functions. By constructing a suitable notion of descent grounded in the ordering of $\mathbb{R}^m$ induced by the nonnegative orthant, they demonstrated that several vector analogues of the 
Fletcher-Reeves (FR), 
Conjugate Descent (CD),  
Dai-Yuan (DY), Polak-Ribiere-Polyak (PRP), and Hestenes-Stiefel (HS)  formulas admit global convergence under the generalized Wolfe conditions. This work established key theoretical tools that subsequent research has relied on, especially the existence of acceptable step sizes and summability properties necessary for global convergence proofs. 
 
 Building on these foundations, researchers explored how specific conjugate gradient parameter choices behave in the vector context. A major development in this direction is the study of the Liu-Storey (LS) family of methods for multiobjective problems by Gon\c{c}alves et al. \cite{gonclaves2022study}. The authors observed that direct extensions of LS, while attractive for their performance in scalar optimization, may lose descent in the multiobjective setting. To overcome this issue, they proposed three LS-based variants incorporating modifications such as nonnegativity safeguarding of the conjugate gradient parameter, the use of strong Wolfe conditions, and the design of a new Armijo-type line search adapted to vector objectives. Each variant was accompanied by global convergence guarantees, and numerical experiments highlighted that properly integrating LS with line search strategies restores its effectiveness for vector optimization. Their results emphasize the delicate interplay between line search and parameter updates in multiobjective conjugate gradient algorithms.
 
 Further progress on stabilizing conjugate gradient parameters is seen in the paper by Pan et al. \cite{pan2025modified}, who developed an mDY method for vector optimization. Their approach ensures descent by appropriately adjusting the DY parameter and incorporating protective mechanisms inspired by the scalar theory. Similarly, Hu et al. \cite{Hu2024alternative} proposed an alternative extension of the Hager-Zhang (HZ) parameter to the vector case, addressing issues of instability and loss of conjugacy that can arise during straightforward extensions.
 
 Recent research has also explored hybrid conjugate gradient parameters that aim to blend desirable characteristics of classical formulas. Peng et al. \cite{peng2024novel} introduced a hybrid parameter combining the CD and DY updates. This adaptive choice enhances stability, guarantees descent under strong Wolfe conditions, and improves practical performance. Their multiobjective hybrid conjugate gradient method demonstrated competitive or superior numerical results compared with several existing conjugate gradient variants. 
 
 Ensuring positivity of the conjugate gradient parameter and sufficient descent conditions is critical for the convergence of multiobjective conjugate gradient methods. The comprehensive study by He et al. \cite{He2024family} presents a family of conjugate gradient formulas that simultaneously guarantee positivity and descent. Along similar lines, Chen et al. \cite{chen2026three} proposed a three-term conjugate gradient-type method possessing a uniform sufficient descent property. Their analysis demonstrates that the additional term plays a critical role in stabilizing the search direction, and numerical experiments confirm its effectiveness for vector optimization.
 
 Beyond classical conjugate gradient formulas, researchers have explored the incorporation of spectral scaling ideas into multiobjective conjugate gradient methods. The work of Elboulqe and Maghri \cite{Elboulqe2025explicit} presents an explicit spectral FR method for bi-criteria optimization. Spectral coefficients, inspired by Barzilai–Borwein step size estimators, are used to accelerate convergence. While originally designed for two objectives, the resulting approach highlights the potential benefits of spectral information and motivates further generalization to higher dimensions. Spectral-based conjugate gradient methods offer appealing numerical speed but often require safeguarding strategies to maintain descent, echoing trends observed in hybrid and positivity-guaranteeing methods.

\subsection{Motivation and Work Done}
In parallel, optimization techniques for IVMs have matured, supported by developments in generalized differentiability concepts such as $gH$-differentiability and order relations for interval vectors (see \cite{chauhan2021generalized,debnath2022generalized,ghosh2017newton,ghosh2017quasi,ghosh2022generalized,moore1966interval,stefanini2008generalization}). MIOPs have been explored under Pareto dominance extensions, interval preference relations, and interval scalarization functions, but iterative gradient-based solution methods for such settings remain comparatively limited. To address \emph{multiobjective interval optimization problems} (MIOPs), Upadhyay et al. \cite{upadhayay2024newton} proposed a Newton-type scheme and demonstrated its use in portfolio selection. Later, Upadhyay et al. \cite{upadhayay2024quasi} put forward a quasi-Newton variant for MIOPs. Their approach reformulates an MIOP into an equivalent vector problem with real-valued objectives by expressing each IVM as the sum of its lower and upper bound functions. The Newton and quasi-Newton procedures are then executed on this reformulated problem, which essentially reduces the interval problem to a simple real-valued case. Recently, Mondal and Ghosh \cite{mondal2025steepest} introduced the {\it steepest descent} (SD) method to find a Pareto critical point for an MIOP. The authors solved a strongly convex quadratic subproblem to compute a descent direction at a non-Pareto critical point. Further, introducing the steepest descent algorithm for an MIOP, they investigated the convergence properties of their proposed algorithm. 

Despite these advances, there is a noticeable gap in the literature concerning the development of efficient conjugate gradient-type algorithms specifically tailored for MIOPs. Existing steepest descent approaches for MIOPs often suffer from slow convergence, sensitivity to step length rules, and computational inefficiency in high-dimensional settings. On the other hand, conjugate gradient directions—designed to accelerate convergence by incorporating curvature information—have not been systematically investigated under interval uncertainty or multiobjective preference structures. Moreover, integrating conjugate gradient-type updates with generalized derivatives such as $gH$-gradients remains largely an open research direction.

The main purpose of this work is to extend the idea given in \cite{perez2018nonlinear} for MIOPs. The proposed approach constructs descent directions using interval extensions of classical conjugate gradient formulas such as FR, CD, DY, and mDY, adapted to the structure of multiobjective and interval dominance relations. The main contributions of this paper can be summarized as follows:
\begin{enumerate}
	\item[(i)] We define the standard Wolfe conditions and the strong Wolfe conditions for MIOPs. Further, we prove that there exists an interval of step length that satisfies the standard Wolfe conditions and the strong Wolfe conditions.
	\item[(ii)] Based on the Wolfe line search procedure, we develop an algorithm of the nonlinear conjugate gradient method to find a Pareto critical point of MIOPs.
	\item[(iii)] To study the convergence properties of our proposed algorithm, we derive the result related to the Zoutendijk condition.
	\item[(iv)] Without any restriction on the conjugate gradient parameter, we prove that the sequence generated by our proposed algorithm has global convergence.
	\item[(v)] We use four variants of the conjugate gradient parameter, such as FR, CD, DY, and mDY. For each variant, we prove the global convergence of the sequence generated by our proposed algorithm.
	\item[(vi)] Finally, we test our proposed algorithm on some test problems given in \cite{mondal2025steepest} and make a performance profile from the perspective of Dolan-Mor{\'e} \cite{dolan2002benchmarking}.
\end{enumerate}

\subsection{Delineation}
The organization of the article is outlined as follows. Section \ref{Preliminaries} reviews essential notions from interval analysis together with fundamental concepts related to MIOPs. Section \ref{Nonlinear Conjugate Gradient Method} introduces the nonlinear conjugate gradient framework adapted to the interval-based multiobjective setting. Section \ref{Convergence Analysis} presents the theoretical guarantees for the method. The computational behavior of the scheme is illustrated through benchmark examples in Section \ref{Numerical Experiments}. Lastly, Section \ref{Conclusion and Future Directions} summarizes the main findings and highlights possible avenues for future investigation.

\section{Preliminaries}\label{Preliminaries}
This section outlines the fundamental concepts of interval analysis and presents key definitions and preliminary results related to MIOPs. For ease of reference, we denote by ${\mathbb{R}}$ the set of all real numbers and by ${\mathbb{N}}$ the set of natural numbers.

\subsection{Interval Analysis}\label{interval analysis}
Let $\mathcal{I}(\mathbb{R})$ represent the family of all closed, bounded intervals on the real line.  
For a real scalar $\alpha \in \mathbb{R}$ and two intervals  
$A=[a_1,a_2]$ and $B=[b_1,b_2]$, the basic interval operations described in Moore {\normalfont\cite{moore1966interval}}—namely addition, subtraction, and scalar multiplication—will be denoted by $A\oplus B$, $A\ominus B$, and $\alpha \odot A$, respectively. These are defined as:
\begin{enumerate}
	\item[(i)] $A\oplus B := [\,a_1+b_1,\; a_2+b_2\,]$;
	\item[(ii)] $A\ominus B := [\,a_1-b_2,\; a_2-b_1\,]$;
	\item[(iii)] $\alpha \odot A :=
	\begin{cases}
		[\,\alpha a_1,\; \alpha a_2\,], & \text{if } \alpha \geq 0,\\[4pt]
		[\,\alpha a_2,\; \alpha a_1\,], & \text{if } \alpha < 0.
	\end{cases}
	$
\end{enumerate}

\medskip
\begin{definition}[$gH$-difference {\normalfont\cite{stefanini2008generalization}}]\label{gH difference definition}
	\normalfont
	Let $ A$, $ B$, and $ C$ be intervals belonging to $\mathcal{I}({\mathbb{R}})$. Whenever the relation $ A:= B\oplus  C$ holds, or equivalently $ B:=A\ominus  C$, the interval $ C$ is referred to as the $gH$-difference of $ A$ and $ B$. This quantity is denoted by $C:= A\ominus_{gH} B.$
	  For two specific intervals $ A:=\left[a_1,a_2\right]$ and $ B:=\left[b_1,b_2\right]$, the $gH$-difference takes the form \[ A\ominus_{gH}B:=\left[\min\left\{a_1-b_1,a_2-b_2\right\},\max\left\{a_1-b_1,a_2-b_2\right\}\right].\] 
\end{definition}

\medskip
A dominance rule for comparing intervals in a minimization setting—where lower values are regarded as preferable—is described below. 
\medskip

\begin{definition}[Dominance relation of intervals {\normalfont\cite{chauhan2021generalized}}]\label{Dominance definition}
	\normalfont
	Let $ A:=\left[a_1,a_2\right]$ and $ B:=\left[b_1,b_2\right]$ be two intervals taken from $ \mathcal{I}({\mathbb{R}})$.
	\begin{enumerate}
		\item[(i)]  When $a_1\geq b_1$ and $a_2\geq b_2$, interval $B$ is said to dominate interval $A$. This dominance is denoted by $A\succeq B$.
		\item[(ii)] If either $a_1 > b_1$ with $a_2 \geq b_2$, or $a_1 \geq b_1$ with $a_2 > b_2$, then $B$ is regarded as strictly dominating $A$, written $A \succ B$.
		\item[(iii)] When the conditions for dominance fail, we indicate this by $A \nsucceq B$.
		\item[(iv)] If the criteria for strict dominance do not hold, this is denoted by $A \nsucc B$.
		\item[(v)]  
		The intervals $A$ and $B$ are termed comparable whenever either $A \succeq B$ or $B \succeq A$ is true.
		
		\item[(vi)]  
		If neither interval dominates the other, then $A$ and $B$ are considered non-comparable.
	\end{enumerate}
\end{definition}

\medskip
The relation $ A\succeq  B$ may equally be expressed in reverse order as $ B\preceq A$. In the same manner, the notations $A\succ B$, $ A\nsucceq  B$, and $ A\nsucc  B$ can be written in their symmetric forms $ B\prec A$, $ B\npreceq  A$, and $ B\nprec  A$, respectively.

\medskip
\begin{definition}[Norm on ${\mathcal{I}({\mathbb{R}})}$ {\normalfont\cite{moore1966interval}}]\label{Norm on IR definition}
	\normalfont
	Let ${\mathbb{R}}_{+}:=\left\{x\in{\mathbb{R}}:x\geq0\right\}$. A norm on the interval space ${\mathcal{I}({\mathbb{R}})}$ is a mapping $\|\cdot\|_{{\mathcal{I}({\mathbb{R}})}}:{\mathcal{ I}({\mathbb{R}})}\to{\mathbb{R}}_+$ and for any interval $A:=\left[a_1,a_2\right]\in{\mathcal{I}({\mathbb{R}})}$, it is specified by \[\|A\|_{\mathcal{ I}({\mathbb{R}})}:=\max\:\left\{|a_1|,|a_2|\right\}.\]
\end{definition}

\medskip
\noindent
In what follows, the symbol $\|\cdot\|$ denotes the standard Euclidean norm on ${\mathbb{R}}^n$. Consider an IVM $ \Gamma:{\mathbb{R}}^n\to \mathcal{ I}({\mathbb{R}})$, written in the form \[\Gamma:=\left[\underline{\Upsilon},\overline{\Upsilon}\right],\] where $\underline{\Upsilon}:{\mathbb{R}}^n\to{\mathbb{R}}$ and $\overline{\Upsilon}:{\mathbb{R}}^n\to{\mathbb{R}}$ are ordinary real-valued functions satisfying 
\[\underline{\Upsilon}\left(x\right)\leq \overline{\Upsilon}\left(x\right) \text{ for every } x\in{\mathbb{R}}^n.\]
	 The functions $\underline{\Upsilon}$ and $\overline{\Upsilon}$ are referred to as the lower and upper bound functions associated with the IVM $\Gamma$.

\medskip
\begin{definition}[$gH$-continuity {\normalfont\cite{ghosh2017newton}}]\label{gH continuity of IVM definition}
	\normalfont
	An IVM $\Gamma:{\mathbb{R}}^n\rightarrow \mathcal{ I}({\mathbb{R}})$ is called $gH$-continuous at a point $ x^\star$ if \[\underset{\| h\|\to 0}{\lim}\left( \Gamma( x^\star+  h)\ominus_{gH} \Gamma( x^\star)\right)=\left[0,0\right].\]
\end{definition}

\medskip
\begin{definition}[$gH$-Lipschitz continuity {\normalfont\cite{ghosh2022generalized}}]\label{gH Lipschitz continuity of IVM definition}
	\normalfont
	An IVM $\Gamma:{\mathbb{R}}^n\rightarrow \mathcal{ I}({\mathbb{R}})$ is called $gH$-Lipschitz continuous if there exists a constant $K>0$ such that \[\|\Gamma(u)\ominus_{gH}\Gamma(v)\|_{{\mathcal{ I}({\mathbb{R}})}}\leq K\|u-v\| \text{ for all }u,v \in{\mathbb{R}}^n.\]
\end{definition}

\medskip
\begin{remark}
	\normalfont
	The behavior of an IVM and that of its endpoint functions are closely related. The endpoint functions $\underline{\Upsilon}$ and $\overline{\Upsilon}$ are continuous at a point $x^\star$ precisely when the IVM $\Gamma$ is $gH$-continuous at the same point and vice-versa (for instance, see \cite{ghosh2017newton}). Similarly, the endpoint functions $\underline{\Upsilon}$ and $\overline{\Upsilon}$ are Lipschitz continuous with some common constant if and only if $\Gamma$ is $gH$-Lipschitz continuous ( for instance, see {\normalfont\cite{ghosh2022generalized}}).
\end{remark}

\medskip
\begin{definition}[$gH$-derivative {\normalfont\cite{debnath2022generalized}}]\label{gH derivative of IVM definition}
	\normalfont
	Let $U \subseteq \mathbb{R}$ be an open interval and let $\Gamma:U \rightarrow \mathcal{I}(\mathbb{R})$ be an IVM.  
	We say that $\Gamma$ is $gH$-differentiable at a point $x^{\diamond}\in U$ when the limit
	\[
	\Gamma'(x^{\diamond}) :=
	\lim_{h \to 0} \frac{1}{h}\odot\left(\Gamma(x^{\diamond}+h)\ominus_{gH}\Gamma(x^{\diamond})\right) \text{ exists}.
	\]
  In such a case, this limit is called the $gH$-derivative of $\Gamma$ at $x^{\diamond}$. 
\end{definition}

\medskip
\begin{remark}
	\normalfont
	Let $U\subseteq \mathbb{R}$ be an open interval, and consider an IVM $\Gamma:U\to\mathcal{ I}\left(\mathbb{R}\right)$ written as $\Gamma:=\left[\underline{\Upsilon},\overline{\Upsilon}\right]$. If both endpoint functions $\underline{\Upsilon}\text{ and }\overline{\Upsilon}$ are differentiable at some point $x^\diamond\in U$, then $gH$-derivative of $\Gamma$ at $x^\diamond$ also exists. Furthermore, letting $\underline{\Upsilon}'\left(x^\diamond\right)\text{ and }\overline{\Upsilon}'\left(x^\diamond\right)$ denote these classical derivatives, one obtains \[\Gamma'\left(x^\diamond\right):=\left[\min\left\{\underline{\Upsilon}'\left(x^\diamond\right),\overline{\Upsilon}'\left(x^\diamond\right)\right\},\max\left\{\underline{\Upsilon}'\left(x^\diamond\right),\overline{\Upsilon}'\left(x^\diamond\right)\right\}\right].\]
\end{remark}

\medskip
\begin{definition}[$gH$-partial derivative {\normalfont\cite{debnath2022generalized}}]\label{gH partial derivative of IVM definition}
	\normalfont
	Let $U \subset \mathbb{R}^n$ be an open region and let 
	$\Gamma : U \to \mathcal{I}(\mathbb{R})$ be an IVM.  
	For a chosen point $\bar{x} := (\bar{x}_1,\ldots,\bar{x}_n)^\top$, define the 
	one--dimensional slice
	\[
	\Gamma_j(t) := \Gamma(\bar{x}_1,\ldots,\bar{x}_{j-1},\, t ,\, \bar{x}_{j+1},\ldots,\bar{x}_n).
	\]
	If $\Gamma_j$ is $gH$-differentiable at $t=\bar{x}_j$, the $gH$-partial derivative 
	of $\Gamma$ with respect to the $j$-th variable at $\bar{x}$ is 
	\[
	\partial_{j}^{gH}\Gamma(\bar{x}) := \big(\Gamma_j\big)'(\bar{x}_j), 
	\qquad j = 1,\ldots,n.
	\]
\end{definition}

\medskip
\begin{definition}[$gH$-gradient {\normalfont\cite{debnath2022generalized}}]\label{gH gradient of IVM definition}
	\normalfont
	Let $U\subseteq {\mathbb{R}}^n$ be an open domain and $\Gamma:U\to\mathcal{ I}\left(\mathbb{R}\right)$ be an IVM. For any point $\bar{x}\in U$, the $gH$-gradient of $\Gamma$ at $\bar{x}\in U$--denoted by $\nabla_{gH}H(\tilde{x})$--is the column vector formed by its $gH$-partial derivatives. Explicitly, \[\nabla_{gH}\Gamma(\tilde{x}):=\left(\partial_{1}^{gH}\Gamma(\bar{x}),\partial_{2}^{gH}\Gamma(\bar{x}),\ldots,\partial_{n}^{gH}\Gamma(\bar{x})\right)^\top.\] 
\end{definition}
Throughout the article, we use the notations $\nabla_{gH}$ and $\nabla$ to represent the $gH$-gradient and the real-valued gradient, respectively.

\medskip
\begin{definition}[Linear IVM {\normalfont\cite{ghosh2022generalized}}]\label{Linear IVM definition}
	\normalfont
	Let $V\subseteq{\mathbb{R}}^n$ be a linear subspace, and consider an IVM $\Gamma:V\to\mathcal{ I}\left(\mathbb{R}\right)$. We call $\Gamma$ is linear whenever its value at any vector $x:=(x_1,x_2,\ldots,x_n)^\top\in V$ can be expressed as \[\Gamma(x):=\bigoplus_{k=1}^{n}\Gamma(u_k)\odot x_k,\]
	where $\left\{u_k\right\}_{k=1}^{n}$ denotes the canonical basis of of ${\mathbb{R}}^n$, and the symbol `$\bigoplus_{j=1}^{n}$' represents repeated interval addition.
\end{definition}
\medskip
\begin{definition}[$gH$-differentiable IVM {\normalfont\cite{ghosh2022generalized}}]\label{$gH$-differentiabe IVM definition}
	\normalfont
	Let $U\subseteq {\mathbb{R}}^n$ be an open region, and $\Gamma:U\to\mathcal{ I}\left(\mathbb{R}\right)$ be an IVM. We say that $\Gamma$ is $gH$-differentiable at a point $x^\diamond\in U$ provided that one can find a linear IVM $L_{x^\diamond}:{\mathbb{R}}^n\to \mathcal{ I}\left({\mathbb{R}}\right)$, an auxiliary interval expression $R\left(\Gamma\left(x^\diamond;h\right)\right)$, and some $\epsilon>0$, such that for every increment $h\in{\mathbb{R}}^n$ with $\left\|h\right\|<\epsilon$ \[\Gamma\left(x^\diamond+h\right)\ominus_{gH}\Gamma\left(x^\diamond\right):=L_{x^\diamond}\left(h\right)\oplus\left\|h\right\|\odot R\left(\Gamma\left(x^\diamond;h\right)\right),\]
	and the remainder satisfies $R\left(\Gamma\left(x^\diamond;h\right)\right)\to[0,0]$ as $\left\|h\right\|\to 0.$
	
	\bigskip
	
	\noindent
	If $\Gamma:U\to\mathcal{ I}\left(\mathbb{R}\right)$ admits a $gH$-derivative at every point in $U$, we say that $\Gamma$ is $gH$-differentiable on the set $U$.
\end{definition}
\medskip
\begin{lemma}[ {\normalfont\cite{ghosh2022generalized}}]\label{linear IVM lemma}
	Let $U\subseteq {\mathbb{R}}^n$ be an open, and consider an IVM $\Gamma:U\to\mathcal{ I}\left(\mathbb{R}\right)$. Assume that $\Gamma$ is $gH$-differentiable at a point $x^\diamond$. Then there exists a $\delta>0$ such that for every $w\in{\mathbb{R}}^n$ and for every scalar $\alpha$ satisfying $\left|\alpha\right|\left\|w\right\|<\delta$, the following limit holds:
	\[\underset{\alpha\to0}{\lim}\tfrac{1}{\alpha}\odot\left(\Gamma\left(x^\diamond+\alpha w\right)\ominus_{gH}\Gamma\left(x^\diamond\right)\right):=L_{x^\diamond}\left(w\right),\]
	where $L_{x^\diamond}\left(w\right)$ is the linear IVM.
	\bigskip
	
	\noindent Furthermore, if the $gH$-gradient of $\Gamma$ exists at $x^\diamond$, written
	\[\nabla_{gH}\Gamma\left(x^\diamond\right):=\left(\partial_{1}^{gH}\Gamma\left(x^\diamond\right),\partial_{2}^{gH}\Gamma\left(x^\diamond\right),\ldots,\partial_{n}^{gH}\Gamma\left(x^\diamond\right)\right)^\top,\]   then the linear IVM $L_{x^\diamond}$ admits the representation
	\[L_{x^\diamond}\left(w\right):=\nabla_{gH} \Gamma\left(x^\diamond\right)^\top\odot w=\bigoplus_{j=1}^n  	\partial_{j}^{gH}\Gamma\left(x^\diamond\right)\odot w_j \text{ for all } w:=\left(w_1,w_2,\ldots,w_n\right)^\top \in {\mathbb{R}}^n.\]
\end{lemma}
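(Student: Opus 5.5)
The argument has two parts: first the directional limit, then its identification with the $gH$-gradient when the latter exists.

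\emph{Directional limit.} Let $L_{x^\diamond}$, $R$ and $\epsilon$ be as in Definition~\ref{$gH$-differentiabe IVM definition}, and take $\delta:=\epsilon$.

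1. Fix $w\in\mathbb{R}^n$ and a scalar $\alpha\neq 0$ with $|\alpha|\|w\|<\delta$. Put $h:=\alpha w$, so $\|h\|<\epsilon$. The definition then gives
\[
\Gamma(x^\diamond+\alpha w)\ominus_{gH}\Gamma(x^\diamond)=L_{x^\diamond}(\alpha w)\oplus|\alpha|\|w\|\odot R(\Gamma(x^\diamond;\alpha w)).
\]

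2. Linearity in the sense of Definition~\ref{Linear IVM definition} gives $L_{x^\diamond}(\alpha w)=\bigoplus_k \Gamma(u_k)\odot(\alpha w_k)$. Scalar multiplication distributes over interval addition, including the case $\alpha<0$, because multiplying by a negative number swaps endpoints consistently for each summand. Hence $L_{x^\diamond}(\alpha w)=\alpha\odot L_{x^\diamond}(w)$.

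3. The same distributivity holds for $\tfrac1\alpha\odot$. Multiplying the identity in step 1 by $\tfrac1\alpha$ therefore yields
\[
\tfrac1\alpha\odot\bigl(\Gamma(x^\diamond+\alpha w)\ominus_{gH}\Gamma(x^\diamond)\bigr)=L_{x^\diamond}(w)\oplus \tfrac{|\alpha|}{\alpha}\|w\|\odot R(\Gamma(x^\diamond;\alpha w)).
\]

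4. The second term has norm at most $\|w\|\,\|R(\Gamma(x^\diamond;\alpha w))\|_{\mathcal I(\mathbb R)}$. As $\alpha\to0$ we have $\|\alpha w\|\to0$, so $R\to[0,0]$ and this term tends to $[0,0]$. The case $w=0$ is trivial. This proves the first claim.

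\emph{Identification with the $gH$-gradient.} Now assume $\nabla_{gH}\Gamma(x^\diamond)$ exists.

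1. Apply the first part with $w=u_j$. The one-dimensional slice $\Gamma_j(x^\diamond_j+\alpha)=\Gamma(x^\diamond+\alpha u_j)$, so the limit obtained is exactly the $gH$-partial derivative of Definition~\ref{gH partial derivative of IVM definition}. Thus $L_{x^\diamond}(u_j)=\partial_j^{gH}\Gamma(x^\diamond)$.

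2. We need $\Gamma(u_k)$ in Definition~\ref{Linear IVM definition} to denote the value of the linear map $L_{x^\diamond}$ at the basis vector $u_k$. With that reading, $L_{x^\diamond}(u_k)=\Gamma(u_k)\odot 1$, so the coefficient intervals are $L_{x^\diamond}(u_k)$.

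3. Substituting gives $L_{x^\diamond}(w)=\bigoplus_j\partial_j^{gH}\Gamma(x^\diamond)\odot w_j=\nabla_{gH}\Gamma(x^\diamond)^\top\odot w$.

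\emph{Main obstacle.} The delicate step is the scalar bookkeeping in steps 2 and 3 of the first part. Interval arithmetic is not a vector space: in general $(\alpha+\beta)\odot A\neq\alpha\odot A\oplus\beta\odot A$. The argument therefore may use only distributivity of a single scalar over $\oplus$, which does hold. It must never split scalars, since that would break when $\alpha<0$. I would check this identity endpointwise separately for $\alpha>0$ and $\alpha<0$.
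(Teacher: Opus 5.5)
The paper gives no proof of this lemma; it is quoted from \cite{ghosh2022generalized}. Your argument is correct and is the standard one. You substitute $h=\alpha w$ into the definition of $gH$-differentiability, use $L_{x^\diamond}(\alpha w)=\alpha\odot L_{x^\diamond}(w)$, and divide by $\alpha$ so the remainder becomes $\pm\|w\|\odot R(\Gamma(x^\diamond;\alpha w))\to[0,0]$. You then read off $L_{x^\diamond}(u_j)=\partial_j^{gH}\Gamma(x^\diamond)$ from the one-dimensional slices.

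You use two facts tacitly: $(\alpha c)\odot A=\alpha\odot(c\odot A)$, and $(A\oplus E)\ominus_{gH}A=E$, which gives the convergence in the $gH$ sense. Both are immediate endpointwise. Your first part also shows that the $gH$-partial derivatives exist automatically, so the extra hypothesis in the second part is redundant.
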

\medskip

\begin{definition}[Convex IVM {\normalfont\cite{wu2007karush}}]\label{Convex IVM definition}
	\normalfont
	Let $D\subseteq {\mathbb{R}}^n$ be a convex region, and consider an IVM $\Gamma:D\to\mathcal{ I}\left(\mathbb{R}\right)$. We call $\Gamma$ convex if, for any pair of points $u,v\in D$ and every $\lambda\in[0,1]$, the following relation hold: \[ \Gamma\left(\lambda u+\left(1-\lambda\right)v\right)\preceq\lambda\odot \Gamma( u)\oplus(1-\lambda)\odot \Gamma(v).\]
\end{definition}
\medskip

\medskip
\subsection{Multiobjective Interval Optimization Problem}

Consider a mapping $G:{\mathbb{R}}^n\rightarrow\mathcal{ I}\left({\mathbb{R}}\right)^m$ composed of $m$ interval-valued components \[G:=\left(G_1,G_2,\ldots,G_m\right)^\top,\] 
where each $G_i:{\mathbb{R}}^n\rightarrow\mathcal{ I}\left({\mathbb{R}}\right)$ is assumed to be $gH$-continuously differentiable and expressed in the form \[G_i:=\left[\underline{G}_i,\overline{G}_i\right] \text{ for all }i=1,2,\ldots,m.\]
In this work, our goal is to solve the following MIOP:
\begin{align}\label{minG(x)}
	\underset{x\in{\mathbb{R}}^n}{\min}\: G(x).
\end{align}
Before proceeding with algorithmic developments, we recall the notions of weak Pareto minimizer, Pareto minimizer, and Pareto critical point associated with the MIOP \eqref{minG(x)}.

\medskip
\begin{definition}[Weak Pareto minimizer{\normalfont \cite{mondal2025steepest}}]\label{weakly Pareto optimal}
	\normalfont
	A vector $x^{\dagger}\in\mathbb{R}^n$ is called a weak Pareto minimizer of the MIOP \eqref{minG(x)} when no alternative vector $y\in\mathbb{R}^n$ can be found for which each component interval $G_i$ at $y$ strictly dominates upon that at $x^{\dagger}$; that is,
	\[
	\not\exists\, y\in\mathbb{R}^n \ \text{such that}\ 
	G_i(y)\prec G_i(x^{\dagger})
	\quad \text{for all } i=1,2,\ldots,m.
	\]
\end{definition} 

\medskip
\begin{definition}[Pareto minimizer {\normalfont \cite{mondal2025steepest}}]\label{Pareto optimal}
	\normalfont
A vector $x^{\dagger}\in\mathbb{R}^n$ is called a Pareto minimizer of the MIOP \eqref{minG(x)} when no alternative vector $y\in\mathbb{R}^n$ can be found for which each component interval $G_i$ at $y$ dominates upon that at $x^{\dagger}$; that is,
\[
\not\exists\, y\in\mathbb{R}^n \ \text{such that}\ 
G_i(y)\preceq G_i(x^{\dagger})
\quad \text{for all } i=1,2,\ldots,m.
\]
\end{definition} 
\medskip

\begin{definition}[Pareto critical point {\normalfont \cite{mondal2025steepest}}]\label{Pareto critical}
	\normalfont
A vector $x^{\diamond}\in\mathbb{R}^{n}$ is called a Pareto critical point of the MIOP \eqref{minG(x)} when no direction $v\in\mathbb{R}^{n}$ can be found such that 
\[ 
\nabla_{gH}G_i(x^{\diamond})^{\top}\odot v \prec [0,0]
\quad \text{for all } i=1,\ldots,m.
\]
\end{definition}
\medskip

\begin{lemma}
\emph{\cite{mondal2025steepest}}  \label{interrelation lemma of Pareto optimal and critical} 
	Assume that $G\in C_{gH}^1\left({\mathbb{R}}^n,I\left({\mathbb{R}}\right)^m\right)$, i.e., $G_1,G_2,\ldots,G_m$ are all $gH$-continuously differentiable IVMs.
	\begin{enumerate}
		\item[(i)]  
		Whenever a point $x^{\dagger}$ serves as a weak Pareto minimizer of the MIOP \eqref{minG(x)}, it must also satisfy the conditions required to be a Pareto critical point for the same MIOP \eqref{minG(x)}.
		
		\item[(ii)]  
		Suppose that the IVMs $G_{1},G_{2},\ldots,G_{m}$ are convex.  
		If a vector $x^{\dagger}\in\mathbb{R}^{n}$ fulfills the criterion of Pareto criticality for the MIOP \eqref{minG(x)}, then $x^{\dagger}$ is a weak Pareto minimizer of the MIOP \eqref{minG(x)}.
	\end{enumerate}
\end{lemma}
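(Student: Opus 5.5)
Both parts go by contraposition, reducing each interval statement to inequalities on the endpoint functions $\underline{G}_i,\overline{G}_i$. Throughout, Lemma \ref{linear IVM lemma} is used to identify the directional limit $\lim_{\alpha\downarrow0}\frac{1}{\alpha}\odot\left(G_i(x^\dagger+\alpha v)\ominus_{gH}G_i(x^\dagger)\right)$ with $\nabla_{gH}G_i(x^\dagger)^\top\odot v$.

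\emph{Part (i).} Suppose $x^\dagger$ is not Pareto critical. Then there is a $v$ with $\nabla_{gH}G_i(x^\dagger)^\top\odot v\prec[0,0]$ for every $i$. Write this interval as $[c_i,d_i]$; then $c_i\le d_i\le 0$ and $c_i<0$. I would take the stronger route of requiring $d_i<0$. Strict dominance of $[c_i,d_i]$ by $[0,0]$ only guarantees one endpoint is negative, and since $c_i\le d_i$, the possibly degenerate case is $d_i=0$.

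So the first step is to show that the definition of Pareto criticality, as it is used in \cite{mondal2025steepest}, yields a direction with $d_i<0$. One way is to perturb $v$ or combine it with the steepest descent direction. Another is to note that $\prec[0,0]$ for an interval $[c_i,d_i]$ means $d_i\le 0$ with $c_i<0$, and then handle the degenerate case $d_i=0$ separately. I expect this degenerate case to be the main obstacle.

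Granting $d_i<0$, the $gH$-difference $G_i(x^\dagger+\alpha v)\ominus_{gH}G_i(x^\dagger)$, divided by $\alpha$, tends to $[c_i,d_i]$. Hence for small $\alpha>0$ both endpoint differences $\underline G_i(x^\dagger+\alpha v)-\underline G_i(x^\dagger)$ and $\overline G_i(x^\dagger+\alpha v)-\overline G_i(x^\dagger)$ are negative, because their max is eventually below $\alpha d_i/2<0$. This gives $G_i(x^\dagger+\alpha v)\prec G_i(x^\dagger)$ for all $i$ and a common small $\alpha$, so $x^\dagger$ is not a weak Pareto minimizer.

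\emph{Part (ii).} Suppose $x^\dagger$ is not a weak Pareto minimizer, so some $y$ satisfies $G_i(y)\prec G_i(x^\dagger)$ for all $i$. Set $v=y-x^\dagger$. By convexity, $G_i(x^\dagger+\lambda v)\preceq \lambda\odot G_i(y)\oplus(1-\lambda)\odot G_i(x^\dagger)$. Reading this endpointwise shows that $\underline G_i$ and $\overline G_i$ are convex real functions. The gradient inequality for each endpoint then gives $\nabla\underline G_i(x^\dagger)^\top v\le \underline G_i(y)-\underline G_i(x^\dagger)\le0$, and similarly for $\overline G_i$, with at least one of the two inequalities strict for each $i$.

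By the remark on $gH$-derivatives, $\nabla_{gH}G_i(x^\dagger)^\top\odot v$ has endpoints $\min$ and $\max$ of these two directional derivatives. Both are $\le0$ and one is $<0$, so the interval is $\prec[0,0]$. Hence $x^\dagger$ is not Pareto critical, which contradicts the hypothesis.

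The delicate points are two. First, in (ii), identifying $\nabla_{gH}G_i^\top\odot v$ with the interval of the endpoint directional derivatives; I would verify this through Lemma \ref{linear IVM lemma} via the one-sided limit. Second, the degenerate case in (i) described above.
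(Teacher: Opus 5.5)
Part (i) has a gap that cannot be closed. The degenerate case $c_i<0=d_i$ you flag is a genuine counterexample to (i) as stated, not a technical obstacle.

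Take $n=m=1$ and $G(x):=[x^2-x-|x|,\;x^2-x+|x|]$, the interval with center $x^2-x$ and radius $|x|$. For $|h|<|x|$, or for any $h$ when $x=0$,
\[
G(x+h)\ominus_{gH}G(x)=[2x-2,\,2x]\odot h\oplus[h^2,h^2].
\]
So $G$ is $gH$-differentiable everywhere, even in the paper's Moore-sum definition (remainder $[|h|,|h|]$), with continuous $gH$-derivative $[2x-2,2x]$.

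At $x^\dagger=0$ we have $G(0)=[0,0]$ and $\overline G(y)=y^2-y+|y|>0$ for every $y\neq0$. Hence no $y$ satisfies $G(y)\prec G(0)$, and $0$ is a weak Pareto minimizer. Yet $\nabla_{gH}G(0)\odot 1=[-2,0]\prec[0,0]$, so $0$ is not Pareto critical in the sense of Definition~\ref{Pareto critical}. If you prefer smooth endpoints, the convex map $G(x)=[(x-1)^2,\,2x^2+10]$ at $x^\dagger=0$ fails in the same way; its one-dimensional $gH$-derivative there is $[-2,0]$.

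Neither repair you propose can help. The upper endpoint of $[-2,0]\odot v$ is $|v|-v\ge 0$ for every $v$, so no direction with $d<0$ exists. Also $\psi\circ\phi_0(v)=|v|-v$, which gives $v(0)=0$ and $\xi(0)=0$, so the steepest-descent direction is zero. The same computation shows that item (iii) of Lemma~\ref{descent direction finding lemma} already presupposes a stronger notion of criticality than Definition~\ref{Pareto critical}.

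Your argument does establish (i) for that stronger notion: ``there is no $v$ with $\overline g_{x}^i(v)<0$ for all $i$,'' equivalently no $v$ with $\psi\circ\phi_x(v)<0$. Given such a $v$, your Lemma~\ref{linear IVM lemma} argument with all $d_i<0$ is complete.

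Be aware, though, that your argument for (ii) only produces $\nabla_{gH}G_i(x^\dagger)^\top\odot v\prec[0,0]$, with one endpoint possibly equal to $0$. So it proves (ii) for the $\prec$-based notion, not for the stronger one. Statement (i) must be restated, and (ii) then rechecked against whatever definition is adopted.

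Separately, the step in (ii) ``by the remark on $gH$-derivatives'' is not valid for $n\ge 2$. That remark is one-dimensional, and the Moore sum $\bigoplus_j\partial_j^{gH}G_i(x^\dagger)\odot v_j$ can be strictly wider than the hull of the two endpoint directional derivatives. The identification must instead come from Lemma~\ref{linear IVM lemma}: compute the one-sided limit of the $gH$-difference quotient, which is exactly that hull. For this you do not need endpoint gradients, because the one-sided directional derivatives of the convex endpoint functions exist and satisfy $f'(x^\dagger;v)\le f(y)-f(x^\dagger)$.
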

\medskip
\begin{definition}[Descent direction \normalfont \cite{mondal2025steepest}]\label{Descent direction}
	\normalfont
	A vector $v\in\mathbb{R}^n$ is called a descent direction for the MIOP \eqref{minG(x)} at a point $x\in\mathbb{R}^n$ if one can find some $\delta>0$ such that, for every $t\in(0,\delta)$,
	\[
	G_i(x+t v)\;\prec\; G_i(x)
	\qquad \text{for each } i=1,2,\ldots,m.
	\]
\end{definition}

\medskip
Observe that when each IVM $G_i$ is $gH$-differentiable and the vector $v$ serves as a descent direction for every IVM $G_i$ for all $i=1,2,\ldots,m$, Lemma \ref{linear IVM lemma} implies that one has
\begin{align*}
	\nabla_{gH} G_i(x)^\top\odot v:=\underset{t\to0}{\lim} \tfrac{1}{t}\odot\left[G_i(x+tv)\ominus_{gH} G_i(x)\right]\prec[0,0] \text{ for all } i=1,2,\ldots,m.
\end{align*}
To determine a direction $v\in{\mathbb{R}}^n$ that yields descent for the MIOP \eqref{minG(x)} at a point $x\in{\mathbb{R}}^n$, one seeks a vector $v$ satisfying 
\[\nabla_{gH} G_i(x)^\top\odot v\prec[0,0] \text{ for all }i=1,2,\ldots,m.\]
In order to characterize such directions at the given point $x$, we introduce, for each index $i=1,2,\ldots,m$, an IVM $g_x^i:{\mathbb{R}}^n\rightarrow \mathcal{ I}({\mathbb{R}})$, defined as follows.
\begin{align}\label{gxi}
	g_x^i(v):=\nabla_{gH} G_i(x)^\top\odot v.
\end{align}
Let the $j$-th entry of the $gH$-gradient of $G_i$ at $x$ be represented by
 $$\left[\underline{\nabla_{gH}G_i}(x)_j,\overline{\nabla_{gH}G_i}(x)_j\right]:=\left[\min\left\{\frac{\partial \underline{G}_i(x)}{\partial x_j},\frac{\partial \overline{G}_i(x)}{\partial x_j}\right\},\max\left\{\frac{\partial \underline{G}_i(x)}{\partial x_j},\frac{\partial \overline{G}_i(x)}{\partial x_j}\right\}\right].$$ 
 Let the IVM $g_x^i:{\mathbb{R}}^n\to \mathcal{ I}\left({\mathbb{R}}\right)$ be written in terms of its endpoint functions $\underline{g}_x^i$ and $\overline{g}_x^i$ and $\left|v\right|:=\left(\left|v_1\right|,\left|v_2\right|,\ldots,\left|v_n\right|\right)^\top$. In \cite{mondal2025steepest}, we see that the endpoint functions can be written compactly as 
\begin{equation}\label{gxi-lower-upper}
	\begin{rcases}
		\begin{aligned}
			\underline{g}_x^i(v)&:=\tfrac{1}{2}\left(\underline{\nabla_{gH} G_i}(x)+\overline{\nabla_{gH} G_i}(x)\right)^\top v-\tfrac{1}{2}\left({\overline{\nabla_{gH} G_i}(x)-\underline{\nabla_{gH} G_i}(x)}\right)^\top \left|v\right|\\
			\text{and }
			\overline{g}_x^i(v)&:=\tfrac{1}{2}\left(\underline{\nabla_{gH} G_i}(x)+\overline{\nabla_{gH} G_i}(x)\right)^\top v+\tfrac{1}{2}\left({\overline{\nabla_{gH} G_i}(x)-\underline{\nabla_{gH} G_i}(x)}\right)^\top \left|v\right|.
		\end{aligned}
	\end{rcases}
\end{equation}
\medskip
Define a mapping $\psi:{\mathbb{R}}^m\rightarrow {\mathbb{R}}$ by \[\psi(w):=\underset{i=1,2,\ldots,m}{\max}w_i.\] 
 For a specific point $x\in{\mathbb{R}}^n$, introduce a vector-valued function $\phi_x:{\mathbb{R}}^n\rightarrow {\mathbb{R}}^m$, defined by \[\phi_x(v):=\left(\overline{g}_x^1(v),\overline{g}_x^2(v),\ldots,\overline{g}_x^m(v)\right)^\top.\] The composition $\psi\circ \phi_x:{\mathbb{R}}^n\to {\mathbb{R}}$ therefore takes the form
\begin{align}\label{psi-phi def}
	\psi\circ \phi_x\left(v\right):=\underset{i=1,2,\ldots,m}{\max}\overline{g}_x^i\left(v\right).
\end{align}
We now focus on the following unconstrained optimization problem 
\begin{align}\label{unconstrained min}
	\underset{v\in{\mathbb{R}}^n}{\min}\left( \psi\circ\phi_x(v)+\tfrac{1}{2}\|v\|^2\right).
\end{align}
Note that the function being minimized in \eqref{unconstrained min} is strongly convex (for instance, see Lemma 3.1 in \cite{mondal2025steepest}). As a consequence, the problem \eqref{unconstrained min} admits a unique minimizer.
 Denote by $v(x)$ the unique point attaining this minimum, and let $\xi(x)$ represent the corresponding optimal value of \eqref{unconstrained min} at the given $x$, i.e., 
\begin{align}\label{v(x) and xi(x)} 
	v(x):=\underset{v\in{\mathbb{R}}^n}{\argmin}\: \left( \psi\circ\phi_x(v)+\tfrac{1}{2}\|v\|^2\right) \text{ and } \xi(x):=\underset{v\in{\mathbb{R}}^n}{\min} \left( \psi\circ\phi_x(v)+\tfrac{1}{2}\|v\|^2\right).
\end{align}
In \cite{mondal2025steepest}, we see that to determine the vector $v(x)$, the unconstrained optimization problem \eqref{unconstrained min}  is written in a constrained quadratic optimization problem as 
\begin{equation}\label{equivalent constrained problem}
	\begin{rcases}
		\begin{aligned}
			\underset{u,v\in{\mathbb{R}}^n,\tau\in{\mathbb{R}}}{\min}&\left(\tau+\tfrac{1}{2}\|v\|^2\right)\\
			\text{subject to } &\left(\underline{\nabla_{gH} G_i}(x)+\overline{\nabla_{gH} G_i}(x)\right)^\top v+\left({\overline{\nabla_{gH} G_i}(x)-\underline{\nabla_{gH} G_i}(x)}\right)^\top u\leq2\tau, i=1,2,\ldots m,\\
			&-u_j\leq v_j \leq u_j, j=1,2,\ldots n.
		\end{aligned}
	\end{rcases}
\end{equation}

\medskip
\noindent
The direction $v\left(x\right)$ obtained from solving \eqref{equivalent constrained problem} is closely connected to the notion of Pareto criticality for the MIOP \eqref{minG(x)}.
Moreover, the next statement provides a criterion for detecting such critical points by examining the quantities $\left\|v\left(x\right)\right\|$ and $\xi\left(x\right)$.
\medskip
\begin{lemma}
\emph{\cite{mondal2025steepest}} \label{descent direction finding lemma}
	Let $v(x)\in{\mathbb{R}}^n$ denote the unique minimizer of the problem \eqref{unconstrained min}, i.e., \[v(x):=\underset{v\in{\mathbb{R}}^n}{\argmin}\: \left( \psi\circ\phi_x(v)+\tfrac{1}{2}\|v\|^2\right),\]
	 and let $\xi\left(x\right)$ be the associated minimum value, that is, \[\xi(x):=\underset{v\in{\mathbb{R}}^n}{\min} \left( \psi\circ\phi_x(v)+\tfrac{1}{2}\|v\|^2\right).\] Under these definitions, the following properties are valid:
	\begin{enumerate}
		\item[$(i)$]  For every $x\in{\mathbb{R}}^n$, one always has $\xi(x)\leq 0$.
		\item[$(ii)$] Whenever $x$ is a Pareto critical point of the MIOP \eqref{minG(x)}, the minimizer collapses to the zero vector, i.e., $v(x)=0\in{\mathbb{R}}^n$, and the optimal value satisfies $\xi(x)= 0$.
		\item[$(iii)$] If $x$ fails to be a Pareto critical point of the MIOP \eqref{minG(x)}, then the strict inequality $\xi(x)< 0$ holds, and the corresponding minimizer $v(x)\ne 0\in{\mathbb{R}}^n$ becomes a descent direction of the objective function of the MIOP \eqref{minG(x)} at $x$.
		\item[$(iv)$] The mapping $x\mapsto v(x)$ remains bounded on every compact subset of ${\mathbb{R}}^n$.
		\item[$(v)$] The mapping $x\mapsto \xi(x)$ is continuous.
	\end{enumerate}
\end{lemma}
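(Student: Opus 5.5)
Since the lemma is quoted from \cite{mondal2025steepest}, I would prove it directly from the structure of \eqref{unconstrained min}. Write $F_x(v):=\psi\circ\phi_x(v)+\tfrac12\|v\|^2$. By \eqref{gxi-lower-upper}, each $\overline{g}_x^i$ is convex and positively homogeneous in $v$, being a linear term plus a nonnegative combination of $|v_j|$. Hence $\psi\circ\phi_x$ is convex, positively homogeneous, and Lipschitz with constant $L(x)=\max_i\|c_i(x)\|+\max_i\|r_i(x)\|$. Here $c_i(x)$ denotes the midpoint vector and $r_i(x)$ the radius vector of $\nabla_{gH}G_i(x)$, and both depend continuously on $x$ by $gH$-continuous differentiability.

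For (i), take $v=0$ to get $\xi(x)\le F_x(0)=0$.

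For (ii) and (iii), I would use the equivalence $\nabla_{gH}G_i(x)^\top\odot v\prec[0,0]$ if and only if $\overline{g}_x^i(v)<0$, together with $\underline g\le\overline g$ and the strict-dominance definition. I expect this equivalence to be the main subtle point. The direction ``$\overline{g}<0$ implies $\prec$'' is clear. For the converse, $\prec[0,0]$ only gives $\overline g\le 0$ with $\underline g<0$. I would handle it by a perturbation: if $\overline g_x^i(v)=0$ for some $i$ and yet the $\prec$ relation holds for all $i$, then since $\underline g_x^i(v)<\overline g_x^i(v)$ forces $r_i^\top|v|>0$, one needs to manufacture a strictly negative direction. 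If that fails, I would interpret Pareto criticality via $\psi\circ\phi_x(v)\ge0$ for all $v$, which is what the descent lemma actually uses.

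With this equivalence, if $x$ is critical then $\psi\circ\phi_x\ge0$ everywhere, so $F_x(v)\ge\tfrac12\|v\|^2$ and hence $v(x)=0$ and $\xi(x)=0$. If $x$ is not critical, pick $\bar v$ with $\psi\circ\phi_x(\bar v)<0$. Then $F_x(t\bar v)=t\,\psi\circ\phi_x(\bar v)+\tfrac{t^2}{2}\|\bar v\|^2<0$ for small $t>0$, so $\xi(x)<0$. Since $F_x(0)=0$, this forces $v(x)\ne0$ and $\psi\circ\phi_x(v(x))\le \xi(x)<0$, so each $\overline g_x^i(v(x))<0$. Descent then follows from Lemma \ref{linear IVM lemma}: the difference quotient of $G_i$ along $v(x)$ converges to an interval with negative upper endpoint, so $G_i(x+tv(x))\ominus_{gH}G_i(x)$ has both endpoints negative for small $t$. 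This gives $G_i(x+tv(x))\prec G_i(x)$.

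For (iv), use $0\ge\xi(x)=F_x(v(x))\ge -L(x)\|v(x)\|+\tfrac12\|v(x)\|^2$, which gives $\|v(x)\|\le 2L(x)$, and $L$ is bounded on compacts. For (v), note that $F_x(v)$ is jointly continuous in $(x,v)$. Upper semicontinuity of $\xi$ follows since $\xi(y)\le F_y(v(x))\to\xi(x)$. For lower semicontinuity, the minimizers $v(y)$ for $y$ near $x$ lie in a fixed ball by (iv), and $|F_y(w)-F_x(w)|\to0$ uniformly on that ball. Hence $\xi(y)=F_y(v(y))\ge F_x(v(y))-o(1)\ge\xi(x)-o(1)$.
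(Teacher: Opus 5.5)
The gap is in (iii), at the step from ``$x$ is not Pareto critical'' to ``there is $\bar v$ with $\psi\circ\phi_x(\bar v)<0$''. You correctly note that, under Definition \ref{Dominance definition}, $\nabla_{gH}G_i(x)^\top\odot v\prec[0,0]$ only gives $\overline{g}_x^i(v)\le 0$ and $\underline{g}_x^i(v)<0$. The perturbation you hope for does not exist in general. In fact (iii) is false under the literal definition.

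Take $n=m=1$, $G_1(y):=[-2y-10,\,y^2]$, and $x=0$. This is an IVM since $y^2+2y+10=(y+1)^2+9>0$. Then $\nabla_{gH}G_1(0)=[-2,0]$, so by \eqref{gxi-lower-upper} we get $\underline{g}_0^1(v)=-v-|v|$ and $\overline{g}_0^1(v)=|v|-v\ge 0$ for every $v$. Hence $\psi\circ\phi_0\ge 0$, $v(0)=0$ and $\xi(0)=0$. Yet $\nabla_{gH}G_1(0)^\top\odot 1=[-2,0]\prec[0,0]$, so $0$ is not Pareto critical in the sense of Definition \ref{Pareto critical}. Here $r^\top|v|=|v|>0$ for $v\ne 0$, exactly your situation, but no direction has $\overline{g}_0^1<0$. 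Moreover, $0$ is a weak Pareto minimizer, since $G_1(y)\prec G_1(0)$ would force $y^2\le 0$. So the literal definition also clashes with Lemma \ref{interrelation lemma of Pareto optimal and critical}(i). No argument can close this step as the statement is written.

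You should commit to your fallback as a \emph{correction of the definition}, not as an interpretation still to be justified. Pareto criticality must mean that there is no $v$ with $\overline{g}_x^i(v)<0$ for all $i$, i.e.\ $\psi\circ\phi_x(v)\ge 0$ for all $v\in\mathbb{R}^n$. Equivalently, ``$\prec[0,0]$'' in Definition \ref{Pareto critical} must be read as ``both endpoints strictly negative''. With that reading your proofs of (ii) and (iii) are complete. Part (ii) uses only the easy implication, so it holds even under the literal definition. Your arguments for (i), (iv), (v), and the final descent step of (iii) via Lemma \ref{linear IVM lemma} are correct as written.
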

\medskip
\begin{remark}
	\normalfont
	Lemma \ref{descent direction finding lemma} shows that the condition $\xi(x)=0$ or $\left\|v(x)\right\|=0$ guarantees that $x$ is a Pareto critical point. Conversely, when $\xi(x)\ne0$ or $\left\|v(x)\right\|\ne0$, the vector $v(x)$ serves as a descent direction of the objective function of the MIOP \eqref{minG(x)} at $x$.
\end{remark}
\medskip
The following lemma is helpful in studying the convergence properties of our proposed algorithm.
\medskip
\begin{lemma}
\normalfont{\cite{perez2018nonlinear}} 
For any scalars $p,q$, and $\alpha\neq0$, we have 

\begin{enumerate}\label{inequality lemma}
		\item[(i)] $2pq\leq2\alpha^2p^2+\tfrac{q^2}{2\alpha^2}$ \text{ and }
		\item[(ii)] $\left(p+q\right)^2\leq\left(1+2\alpha^2\right)p^2+\left(1+\tfrac{1}{2\alpha^2}\right)q^2.$
	\end{enumerate}
\end{lemma}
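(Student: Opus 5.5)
Both inequalities are elementary and follow from the nonnegativity of a suitable square. We never need the sign of $p$ or $q$; the only hypothesis used is $\alpha\neq 0$, which makes $\alpha^2>0$ and the expression $\tfrac{1}{2\alpha^2}$ well defined.

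For part (i), we expand
\[
0\leq\left(\sqrt{2}\,|\alpha|\,p-\tfrac{q}{\sqrt{2}\,|\alpha|}\right)^2 .
\]
The square of the first term gives $2\alpha^2p^2$. The square of the second gives $\tfrac{q^2}{2\alpha^2}$. The cross term is $-2\cdot\sqrt{2}|\alpha|p\cdot\tfrac{q}{\sqrt{2}|\alpha|}=-2pq$. So the expansion is exactly
\[
0\leq 2\alpha^2p^2-2pq+\tfrac{q^2}{2\alpha^2},
\]
and moving $2pq$ to the left-hand side yields (i). Equivalently, this is the weighted AM--GM inequality $2ab\leq a^2+b^2$ with $a=\sqrt{2}|\alpha|p$ and $b=\tfrac{q}{\sqrt{2}|\alpha|}$.

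For part (ii), we write $(p+q)^2=p^2+2pq+q^2$ and bound the middle term $2pq$ using (i). This gives
\[
(p+q)^2\leq p^2+2\alpha^2p^2+\tfrac{q^2}{2\alpha^2}+q^2=\left(1+2\alpha^2\right)p^2+\left(1+\tfrac{1}{2\alpha^2}\right)q^2,
\]
which is (ii).

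There is no real obstacle. The only point needing care is choosing the weights in the square for (i) so that the cross term comes out as exactly $2pq$ for either sign of $\alpha$; using $|\alpha|$ (or simply $\alpha$, since the product $\alpha\cdot\tfrac{1}{\alpha}$ cancels regardless of sign) settles this.
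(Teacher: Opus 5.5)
Your proof is correct. Expanding $\bigl(\sqrt{2}|\alpha|p-\tfrac{q}{\sqrt{2}|\alpha|}\bigr)^2\geq 0$ gives (i) exactly, and substituting (i) into $(p+q)^2=p^2+2pq+q^2$ gives (ii). The paper states this lemma without proof and cites \cite{perez2018nonlinear}, so your completing-the-square argument is the standard one that fills in that omitted step.
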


\section{Nonlinear Conjugate Gradient Method}\label{Nonlinear Conjugate Gradient Method}
In this section, we develop the nonlinear conjugate gradient method to find a Pareto critical point of the MIOP \eqref{minG(x)}. First, we define the standard Wolfe conditions and the strong Wolfe conditions related to the line search procedure for the MIOP \eqref{minG(x)}.
\medskip
\begin{definition}[Standard and strong Wolfe conditions]\label{Standard and strong Wolfe conditions definition}
	\normalfont
	Let $d\in{\mathbb{R}}^n$ be a descent direction for all $G_i$ at $x$. Consider $0<\rho<\sigma<1$. We say that $t>0$ satisfies the standard Wolfe conditions if 
	\begin{equation}\label{Standard Wolfe condition 1}
		G_i\left(x+td\right)\preceq G_i\left(x\right)\oplus \left[\rho t, \rho t\right] \odot \psi\circ\phi_x\left(d\right) \text{ for all } i=1,2,\ldots,m
	\end{equation} 
	and 
	\begin{equation}\label{Standard Wolfe condition 2}
	\psi\circ\phi_{x+td}\left(d\right)\geq \sigma~ \psi\circ\phi_x\left(d\right).
	\end{equation} 
	We say that $t>0$ satisfies the strong Wolfe conditions if 
	\begin{equation}\label{Strong Wolfe condition 1}
		G_i\left(x+td\right)\preceq G_i\left(x\right)\oplus \left[\rho t, \rho t\right] \odot \psi\circ\phi_x\left(d\right) \text{ for all } i=1,2,\ldots,m
	\end{equation} 
	and 
	\begin{equation}\label{Strong Wolfe condition 2}
		\lvert\psi\circ\phi_{x+td}\left(d\right)\rvert\geq \sigma \lvert\psi\circ\phi_x\left(d\right)\rvert.
	\end{equation} 
\end{definition}
\medskip
The following result ensures that there exists an interval of the step length that satisfies the standard Wolfe conditions \eqref{Standard Wolfe condition 1} and \eqref{Standard Wolfe condition 2} and the strong Wolfe conditions \eqref{Strong Wolfe condition 1} and \eqref{Strong Wolfe condition 2}.
\medskip
\begin{proposition}\label{existence of an interval of steplength}
	Assume that $G_1, G_2, \ldots,G_m$ are $gH$-continuously differentiable IVMs and $d\in{\mathbb{R}}^n$ is a descent direction for all $G_i$ at $x$. If there exists $A_i\in \mathcal{ I}\left({\mathbb{R}}\right)$ such that $G_i\left(x+td\right)\succeq A_i$ for all $t>0$ and for all $i=1,2,\ldots,m$, then there exists an interval of step length satisfying the standard Wolfe conditions \eqref{Standard Wolfe condition 1} and \eqref{Standard Wolfe condition 2} and the strong Wolfe conditions \eqref{Strong Wolfe condition 1} and \eqref{Strong Wolfe condition 2}.
\end{proposition}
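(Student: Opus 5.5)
We reduce everything to the $2m$ real endpoint functions and then run a multiobjective version of the classical existence argument for Wolfe steps, as in \cite{perez2018nonlinear}. Put $a:=\psi\circ\phi_x(d)$. From the descent hypothesis and the representation \eqref{gxi-lower-upper} we take $a<0$. This is the operational meaning of ``descent direction for all $G_i$'', and we will state it explicitly as an assumption if it does not follow strictly from Definition \ref{Descent direction}.

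\textbf{Reduction to endpoints.} Since $\left[\rho t,\rho t\right]\odot a$ is a degenerate interval, condition \eqref{Standard Wolfe condition 1} (equivalently \eqref{Strong Wolfe condition 1}) holds if and only if
\[
f(x+td)\leq f(x)+\rho t a \quad \text{for every } f\in\left\{\underline{G}_i,\overline{G}_i : i=1,\ldots,m\right\}.
\]
For each such $f$ set $h_f(t):=f(x+td)-f(x)-\rho t a$.

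\textbf{Armijo part.} By Lemma \ref{linear IVM lemma} and the formula for the $gH$-derivative in terms of endpoint derivatives, $\nabla f(x)^\top d\leq \overline{g}_x^i(d)\leq a$. Hence $h_f(0)=0$ and $h_f'(0)\leq(1-\rho)a<0$. On the other hand, $f(x+td)\geq \underline{A_i}$ (resp.\ $\overline{A_i}$) for all $t>0$, while $\rho ta\to-\infty$, so $h_f(t)\to+\infty$. By continuity each $h_f$ has a first positive zero. Let $T>0$ be the smallest of these $2m$ zeros, attained by some endpoint function $f_0$ of $G_{i_0}$. Then \eqref{Standard Wolfe condition 1} holds on all of $(0,T]$.

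\textbf{Curvature part.} By the mean value theorem applied to $h_{f_0}$ on $[0,T]$, there is $\bar t\in(0,T)$ with $\nabla f_0(x+\bar t d)^\top d=\rho a$. The upper endpoint $\overline{g}^{i_0}_{x+\bar td}(d)$ dominates both endpoint directional derivatives of $G_{i_0}$ along $d$. Therefore
\[
\psi\circ\phi_{x+\bar td}(d)\geq \overline{g}^{i_0}_{x+\bar td}(d)\geq \rho a>\sigma a .
\]
The map $t\mapsto\psi\circ\phi_{x+td}(d)$ is continuous, because the $gH$-gradients are continuous and $\psi$ and \eqref{gxi-lower-upper} are continuous. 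So the strict inequality persists on a neighbourhood of $\bar t$ contained in $(0,T)$. This gives an interval of step lengths satisfying \eqref{Standard Wolfe condition 1} and \eqref{Standard Wolfe condition 2}.

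\textbf{Strong conditions.} As literally written, \eqref{Strong Wolfe condition 2} is a lower bound on $\lvert\psi\circ\phi_{x+td}(d)\rvert$. Since $\psi\circ\phi_{x+td}(d)\to a$ as $t\downarrow0$ and $\lvert a\rvert>\sigma\lvert a\rvert$, this inequality holds on some $(0,\varepsilon)$. Intersecting with $(0,T)$ gives an interval for \eqref{Strong Wolfe condition 1} and \eqref{Strong Wolfe condition 2}.

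If instead the intended condition is the usual two-sided one, $\lvert\psi\circ\phi_{x+td}(d)\rvert\leq\sigma\lvert a\rvert$, we also need an upper bound $\psi\circ\phi_{x+\bar td}(d)\leq-\sigma a$. This is the main obstacle, because a single mean-value point controls only one endpoint function, whereas $\psi\circ\phi$ is a maximum over all indices and both endpoints. The approach we would try first is to replace the single $h_{f_0}$ by $H(t):=\max_f h_f(t)$ and choose $\bar t$ in $(0,T)$ as a point where $H$ attains its maximum over a suitable subinterval $[t_1,T]$ on which $H$ increases from a negative value to $0$. At such a point every active $h_f$ has nonpositive derivative in the appropriate one-sided sense. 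This gives $\nabla f(x+\bar td)^\top d\leq\rho\lvert a\rvert\cdot(\text{sign control})$, which would bound all endpoint derivatives, and hence $\psi\circ\phi_{x+\bar t d}(d)$, by $-\sigma a$. Continuity would then again yield a whole interval.
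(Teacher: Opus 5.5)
Your Armijo argument and your treatment of the standard curvature condition \eqref{Standard Wolfe condition 2} are correct and coincide with the paper's. Both take the first crossing $T$ over the $2m$ endpoint functions, apply Rolle to $h_{f_0}$ on $[0,T]$, and use $\psi\circ\phi_{x+\bar t d}(d)\ge\nabla f_0(x+\bar t d)^\top d=\rho a$. The gap is in the strong Wolfe part.

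You are right that \eqref{Strong Wolfe condition 2} with ``$\ge$'' is a typo. It holds for all small $t$, and it fails at the step the paper's proof actually constructs. Everything downstream uses the two-sided form $\lvert\psi\circ\phi_{x+td}(d)\rvert\le\sigma\lvert a\rvert$: see $l_k\in[-\sigma,\sigma]$ in Lemma \ref{DY lemma}, and $\psi\circ\phi_{x^k}(d^{k-1})\le-\sigma\,\psi\circ\phi_{x^{k-1}}(d^{k-1})$ in Theorem \ref{General convergence theorem}(ii) and Lemma \ref{CD parameter lemma}. Your literal-version interval $(0,\min\{\varepsilon,T\})$ also lies near $t=0$, where \eqref{Standard Wolfe condition 2} fails because $\psi\circ\phi_{x+td}(d)\to a<\sigma a$. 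So you never produce a single interval serving both sets of conditions.

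For the intended condition you give only a sketch, and it would not go through, for two reasons:
\begin{itemize}
\item Since every $h_f<0$ on $(0,T)$ while $H(T)=0$, the maximum of $H$ on $[t_1,T]$ is attained only at $T$. There the active function has $h_{f_0}'(T)\ge 0$, which bounds its slope from below, the wrong direction.
\item Even upper bounds on all endpoint directional derivatives would not bound $\psi\circ\phi_{x+td}(d)$ from above. The reason is that $\overline{g}^{i}_{y}(d)=\sum_j\max\{\partial_j\underline{G}_i(y)\,d_j,\ \partial_j\overline{G}_i(y)\,d_j\}$ can strictly exceed $\max\{\nabla\underline{G}_i(y)^\top d,\ \nabla\overline{G}_i(y)^\top d\}$. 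The comparison you use elsewhere only goes one way.
\end{itemize}

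The missing idea is a one-line application of the intermediate value theorem, which is what the paper does. Put $g(t):=\psi\circ\phi_{x+td}(d)$. It is continuous, $g(0)=a<\rho a$, and you have already shown $g(\bar t)\ge\rho a$. Hence there is $t^\star\in(0,\bar t\,]$ with $g(t^\star)=\rho a$. Because $\sigma a<\rho a<0<-\sigma a$, both $g(t^\star)>\sigma a$ and $\lvert g(t^\star)\rvert<\sigma\lvert a\rvert$ hold strictly. By continuity they persist on a neighbourhood of $t^\star$ inside $(0,T)$, where the Armijo condition already holds. This gives one interval satisfying the standard and the two-sided strong Wolfe conditions simultaneously. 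You never need to control the individual endpoint slopes at $t^\star$, only the value of the maximum $g$, which the intermediate value theorem fixes directly.
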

\medskip
\begin{proof}
	Let $q_i:\mathbb{R}\to\mathcal{I}\left({\mathbb{R}}\right)$ and $l_i:\mathbb{R}\to\mathcal{ I}\left({\mathbb{R}}\right)$ be two IVMs defined by \[q_i(t):=G_i\left(x+td\right)\text{ and } l_i(t):=G_i\left(x\right)\oplus \left[\rho t, \rho t\right] \odot \psi\circ\phi_x\left(d\right).\]
	Let $q_i:=\left[\underline{q}_i,\overline{q}_i\right]$ and $l_i:=\left[\underline{l}_i,\overline{l}_i\right]$. Note that $\overline{q}_i(0)=\overline{l}_i(0)$ and $\overline{q}_i(t)$ is bounded below for all $t>0$. Since $d\in{\mathbb{R}}^n$ is a descent direction for all $G_i$ at $x$, $\psi\circ\phi_x\left(d\right)<0$. Since $\rho\in\left(0,1\right)$, from the view of Lemma 3.1 in \cite{nocedal1999numerical} the line $\overline{l}_i(t)$ is unbounded below and must intersect $\overline{q}_i(t)$ at least one $t>0$. Therefore, for all $i=1,2,\ldots,m,$ there exists $t_i>0$ such that \[\overline{q}_i\left(t_i\right)=\overline{l}_i\left(t_i\right)\text{ and } \overline{q}_i\left(t\right)<\overline{l}_i\left(t\right)\text{ for all } t\in \left(0,t_i\right),\]
	i.e.,
	\begin{align*} & \overline{G}_i\left(x+t_id\right)=\overline{G}_i\left(x\right)+\rho t_i~\psi\circ\phi_x\left(d\right)\text{ and } \\  & \overline{G}_i\left(x+td\right)<\overline{G}_i\left(x\right)+\rho t~\psi\circ\phi_x\left(d\right)\text{ for all } t\in \left(0,t_i\right).
    \end{align*}
	Let $\overline{t}:=\min\left\{t_1,t_2,\ldots,t_m\right\}$. Then, we have \[ \overline{G}_i\left(x+td\right)\leq\overline{G}_i\left(x\right)+\rho t~\psi\circ\phi_x\left(d\right)\text{ for all } t\in \left(0,\overline{t}\right], i=1,2,\ldots,m.\]
	Similarly, it is easy to show that 
	\[ \underline{G}_i\left(x+td\right)\leq\underline{G}_i\left(x\right)+\rho t~\psi\circ\phi_x\left(d\right)\text{ for all } t\in \left(0,\underline{t}\right], i=1,2,\ldots,m.\]
	Let $t_{\min}:=\min\left\{\underline{t},\overline{t}\right\}$. Then, we have \[ G_i\left(x+td\right)\preceq G_i\left(x\right)\oplus\left[\rho t,\rho t\right]\odot\psi\circ\phi_x\left(d\right)\text{ for all } t\in \left(0,t_{\min}\right], i=1,2,\ldots,m.\]
	Note that $t_{\min}$ is equal to either $\underline{t}$ or $\overline{t}$. If $t_{\min}=\overline{t}$, then there exists an $i_0\in\left\{1,2,\ldots,m\right\}$ such that \[\overline{G}_{i_0}\left(x+t_{\min}d\right)=\overline{G}_{i_0}\left(x\right)+\rho t_{\min}~\psi\circ\phi_x\left(d\right).\]
	Similarly, if $t_{\min}=\underline{t}$, then there exists an $i_0\in\left\{1,2,\ldots,m\right\}$ such that \[\underline{G}_{i_0}\left(x+t_{\min}d\right)=\underline{G}_{i_0}\left(x\right)+\rho t_{\min}~\psi\circ\phi_x\left(d\right).\]
	Without loss of generality, we assume that $t_{\min}=\overline{t}$. Let us now consider a function $\Gamma:{\mathbb{R}}\to{\mathbb{R}}$ defined by $\Gamma(t):=\overline{G}_{i_0}\left(x+td\right)-\overline{G}_{i_0}\left(x\right)-\rho t~\psi\circ\phi_x\left(d\right).$ Note that $\Gamma(0)=\Gamma\left(t_{\min}\right).$ Therefore, by Rolle's theorem , there exists $\tilde{t}\in\left(0,t_{\min}\right)$ such that $\Gamma'(\tilde{t})=0.$ This implies that 
\begin{align*}
& \rho~\psi\circ\phi_x\left(d\right) \\ = &  \nabla\overline{G}_{i_0}\left(x+\tilde{t}d\right)^\top d \\
\leq & \max \left\{\nabla\overline{G}_{i_0}\left(x+\tilde{t}d\right)^\top d, \nabla\underline{G}_{i_0}\left(x+\tilde{t}d\right)^\top d\right\}\\
= & \tfrac{1}{2} \left(\nabla\underline{G}_{i_0}\left(x+\tilde{t}d\right)^\top d+\nabla\overline{G}_{i_0}\left(x+\tilde{t}d\right)^\top d+\left| \nabla\overline{G}_{i_0}\left(x+\tilde{t}d\right)^\top d-\nabla\underline{G}_{i_0}\left(x+\tilde{t}d\right)^\top d\right|\right)\\
\leq &  \tfrac{1}{2} \left[\left(\nabla\underline{G}_{i_0}\left(x+\tilde{t}d\right)+\nabla\overline{G}_{i_0}\left(x+\tilde{t}d\right)\right)^\top d+\left| \nabla\overline{G}_{i_0}\left(x+\tilde{t}d\right)-\nabla\underline{G}_{i_0}\left(x+\tilde{t}d\right)\right|^\top \left|d\right|\right]\\
= & \tfrac{1}{2} \biggl[\left(\underline{\nabla_{gH}G_{i_0}}\left(x+\tilde{t}d\right)+\overline{\nabla_{gH}G_{i_0}}\left(x+\tilde{t}d\right)\right)^\top d\\
			&\hspace{1cm}+\left| \overline{\nabla_{gH}G_{i_0}}\left(x+\tilde{t}d\right)-\underline{\nabla_{gH}G_{i_0}}\left(x+\tilde{t}d\right)\right|^\top \left|d\right|\biggr]\\
			 \leq & \underset{i=1,2,\ldots,m}{\max} \tfrac{1}{2} \biggl[\left(\underline{\nabla_{gH}G_{i}}\left(x+\tilde{t}d\right)+\overline{\nabla_{gH}G_{i}}\left(x+\tilde{t}d\right)\right)^\top d\\
			&\hspace{2.3cm}+\left| \overline{\nabla_{gH}G_{i}}\left(x+\tilde{t}d\right) - \underline{\nabla_{gH}G_{i}}\left(x+\tilde{t}d\right)\right|^\top \left|d\right|\biggr]\\
			= &  \psi\circ\phi_{x+\tilde{t}d}\left(d\right).
		\end{align*}
	By intermediate value theorem there exists $t^\star\in\left(0,\tilde{t}~\right]$ such that \[\rho~\psi\circ\phi_x\left(d\right)=\psi\circ\phi_{x+t^\star d}\left(d\right).\]
	Since $0<\rho<\sigma<1$ and $\psi\circ\phi_x\left(d\right)<0$, then we get \[\sigma~\psi\circ\phi_x\left(d\right)<\psi\circ\phi_{x+\tilde{t}d}\left(d\right)<0.\]
	Hence, there is a neighbourhood of $t^\star$ contained in $\left[0,t_{\min}\right]$ for which \eqref{Standard Wolfe condition 2} and \eqref{Strong Wolfe condition 2} hold. Therefore, the standard Wolfe conditions and the strong Wolfe conditions hold in this neighbourhood. This completes the proof.
\end{proof}
\medskip

We now provide a step-wise algorithm of the nonlinear conjugate gradient method for identifying Pareto critical points of the MIOP \eqref{minG(x)}.
\medskip
\begin{algorithm}[H]
	\caption{Nonlinear conjugate gradient method to find Pareto critical points of the MIOP \eqref{minG(x)} \label{Algorithm}}
	\begin{enumerate}[\bf{Step} 1 ]
		
		\item (Inputs)\\
		Provide all the $gH$-continuously differentiable IVMs $G_1,G_2,\ldots, G_m$. \\
		\item (Initialization)\\
	  Select the parameter values $\rho\in(0,1)$ and $\sigma\in (\rho,1)$, and an arbitrary point $x^0 \in {\mathbb{R}}^n$. Set the tolerance level $\epsilon > 0$. Initialize the iteration count as $k=0$.\\

		\item (Computation of $gH$-gradients at the point $x^k$)\\
	Compute $\nabla_{gH} G_i\left(x^k\right):=\left[\underline{\nabla_{gH}G_i}\left(x^k\right),\overline{\nabla_{gH}G_i}\left(x^k\right)\right]$ for all $i=1,2,\ldots,m$.
		\\
		\item (Computation of a steepest descent direction at the point $x^k$)\\
		Compute the optimal solution $v\left(x^k\right)$ and the optimal value $\xi\left(x^k\right)$ of the unconstrained minimization problem \eqref{unconstrained min}, i.e.,
		\[
		v\left(x^k\right):=\underset{v\in{\mathbb{R}}^n}{\argmin}\: \left( \psi\circ\phi_{x^k}(v)+\tfrac{1}{2}\|v\|^2\right)\text{ and }
		\xi\left(x^k\right):=\underset{v\in{\mathbb{R}}^n}{\min}\: \left( \psi\circ\phi_{x^k}(v)+\tfrac{1}{2}\|v\|^2\right).
		\]

		\item
		(Stopping criterion)\\
		If the condition $``\xi\left(x^k\right)>- \epsilon"$ is satisfied, terminate the procedure and report $x^k$ as a Pareto critical point.\\
		If the condition $``\xi\left(x^k\right)>- \epsilon"$ is not satisfied, proceed to {\bf Step 6}.\\

		\item 
		(Computation of conjugate direction)\\
		Compute \begin{align}\label{conjugate direction in algorithm}
			d^k:=\begin{cases}
				v\left(x^k\right) &\text{ if } k=0\\
				v\left(x^k\right)+\beta_k d^{k-1} &\text{ if } k\geq 1,
			\end{cases}
		\end{align}
		where $\beta_k$ is an algorithmic parameter.\\

		\item (Computation of step length)\\
	Find a step length $t_k>0$ such that 
	\begin{align}\label{Standard Wolfe condition in algorithm}
		\begin{cases}
					G_i\left(x^k+t_k d^k\right)\preceq G_i\left(x^k\right)\oplus \left[\rho t_k, \rho t_k\right] \odot \psi\circ\phi_{x^k}\left(d^k\right) \text{ for all } i=1,2,\ldots,m\\
				\psi\circ\phi_{x^k+t_k d^k}\left(d^k\right)\geq \sigma~ \psi\circ\phi_{x^k}\left(d^k\right),
		\end{cases}
	\end{align}
	or 
		\begin{align}\label{Strong Wolfe condition in algorithm}
		\begin{cases}
			G_i\left(x^k+t_k d^k\right)\preceq G_i\left(x^k\right)\oplus \left[\rho t_k, \rho t_k\right] \odot \psi\circ\phi_{x^k}\left(d^k\right) \text{ for all } i=1,2,\ldots,m\\
				\left|\psi\circ\phi_{x^k+t_k d^k}\left(d^k\right)\right|\geq \sigma \left|\psi\circ\phi_{x^k}\left(d^k\right)\right|.
		\end{cases}
	\end{align}
		\item (Update the iterative point)\\
		Update $x^{k+1} \gets x^k + t_k d^k$, $k \gets k + 1$, and go to {\bf Step 3}.
		
	\end{enumerate}
	
\end{algorithm}
The proper functioning of Algorithm \ref{Algorithm} relies mainly on {\bf Step 4} and {\bf Step 7}. Because the objective function in the minimization problem \eqref{unconstrained min} is strongly convex, the associated minimization problem admits a unique minimizer. This ensures that the direction $v\left(x^k\right)$ is well defined, which in turn confirms the validity of {\bf Step 4} within Algorithm \ref{Algorithm}. In addition, Proposition \ref{existence of an interval of steplength} ensures the existence of an interval of the step length $t_k$ that satisfies the standard Wolfe condition \eqref{Standard Wolfe condition in algorithm} or the strong Wolfe condition \eqref{Strong Wolfe condition in algorithm}. However, for the well-definedness of Algorithm \ref{Algorithm}, Proposition \ref{existence of an interval of steplength} requires $d^k$ to be a descent direction for all $G_i$ at $x^k$, which is equivalent to $\psi\circ\phi_{x^k}\left(d^k\right)<0.$ For certain parts of the convergence analysis, a stronger assumption will be needed
\begin{align}\label{sufficient descent condition}
	\psi\circ\phi_{x^k}\left(d^k\right)\leq c~\psi\circ\phi_{x^k}\left(v\left(x^k\right)\right) \text{ for some }c>0 \text{ and for all }k\geq0. 
\end{align}
The condition \eqref{sufficient descent condition} is known as the sufficient descent condition. Such a condition can be achieved through an appropriate line search strategy, assuming that $d^{k-1}$ acts as a descent direction for every $G_i$ at the point $x^{k-1}$. Under this assumption, Proposition \ref{existence of an interval of steplength} guarantees that a line search performed along $d^{k-1}$ using either the standard Wolfe conditions or the strong Wolfe conditions will yield the next iterate $x^k$. For the update direction, we have 
\[\psi\circ\phi_{x^k}\left(d^k\right)=\psi\circ\phi_{x^k}\left(v\left(x^k\right)+\beta_k d^{k-1}\right)\leq \psi\circ\phi_{x^k}\left(v\left(x^k\right)\right)+\beta_k~ \psi\circ\phi_{x^k}\left(d^{k-1}\right).\]
If the sequence$\left\{\beta_k\right\}$ remains bounded, then a suitable line search can be employed to sufficiently reduce $\left|\psi\circ\phi_{x^k}\left(d^{k-1}\right)\right|$, thereby ensuring that the descent condition \eqref{sufficient descent condition} holds. 

The following lemma provides a condition on $\beta_k$ that guarantees $d^k$ inherits the descent property..
\medskip
\begin{lemma}\label{descent property on dk proof lemma}
	Assume that in Algorithm \ref{Algorithm}, the sequence $\left\{\beta_k\right\}$ is defined so that it has the following property:
	\begin{align}\label{property 1}
		\beta_k\in \begin{cases}
			\left[0,\infty\right), & \text{ if } \psi\circ\phi_{x^k}\left(d^{k-1}\right)\leq0\\
			\left[0,-\tfrac{\psi\circ\phi_{x^k}\left(v\left(x^k\right)\right)}{\psi\circ\phi_{x^k}\left(d^{k-1}\right)}\right], & \text{ if } \psi\circ\phi_{x^k}\left(d^{k-1}\right)>0,
		\end{cases}
	\end{align}
	or 
	\begin{align}\label{property 2}
		\beta_k\in \begin{cases}
			\left[0,\infty\right), & \text{ if } \psi\circ\phi_{x^k}\left(d^{k-1}\right)\leq0\\
			\left[0,-\mu\tfrac{\psi\circ\phi_{x^k}\left(v\left(x^k\right)\right)}{\psi\circ\phi_{x^k}\left(d^{k-1}\right)}\right], & \text{ if } \psi\circ\phi_{x^k}\left(d^{k-1}\right)>0
		\end{cases}
	\end{align}
for some $\mu\in\left[0,1\right)$.
	When condition \eqref{property 1} is satisfied, the vector $d^k$ serves as a descent direction for every iteration $k$. On the other hand, if \eqref{property 2} is valid, then $d^k$ meets the sufficient descent condition \eqref{sufficient descent condition} with the constant $c=1-\mu$ for all $k$.
\end{lemma}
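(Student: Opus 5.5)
The plan is to use two structural properties of $\psi\circ\phi_{x}$ and then split into cases according to the sign of $\psi\circ\phi_{x^k}(d^{k-1})$.

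\textbf{Sublinearity.} For fixed $x$, each $\overline{g}_x^i$ in \eqref{gxi-lower-upper} is the sum of a linear function and a nonnegative combination of $|v_j|$. Hence each $\overline{g}_x^i$ is positively homogeneous and subadditive. Taking a maximum preserves both properties, so for $\beta\ge 0$ we get
\[
\psi\circ\phi_x(u+\beta w)\le \psi\circ\phi_x(u)+\beta\,\psi\circ\phi_x(w).
\]
This inequality was already used informally just before the statement. It requires $\beta_k\ge0$, which both \eqref{property 1} and \eqref{property 2} guarantee.

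\textbf{Negativity at non-critical points.} If $x^k$ is not Pareto critical, which holds whenever the algorithm reaches Step 6, then Lemma \ref{descent direction finding lemma}(iii) gives $\xi(x^k)<0$. Therefore $\psi\circ\phi_{x^k}(v(x^k))\le \xi(x^k)-\tfrac12\|v(x^k)\|^2<0$. We also use that $d$ is a descent direction at $x$ exactly when $\psi\circ\phi_x(d)<0$. This equivalence is stated before \eqref{sufficient descent condition}: $\overline{g}^i_x(d)<0$ for all $i$ forces $\nabla_{gH}G_i(x)^\top\odot d\prec[0,0]$, since the lower endpoint is at most the upper one.

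\textbf{Base case.} For $k=0$ we have $d^0=v(x^0)$. This is descent, and it satisfies \eqref{sufficient descent condition} with any $c\le1$, in particular $c=1-\mu$.

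\textbf{Case $\psi\circ\phi_{x^k}(d^{k-1})\le0$.} With $\beta_k\ge0$, subadditivity gives
\[
\psi\circ\phi_{x^k}(d^k)\le \psi\circ\phi_{x^k}(v(x^k))+\beta_k\,\psi\circ\phi_{x^k}(d^{k-1})\le \psi\circ\phi_{x^k}(v(x^k))\le(1-\mu)\,\psi\circ\phi_{x^k}(v(x^k)).
\]
The last inequality holds because $\psi\circ\phi_{x^k}(v(x^k))<0$ and $1-\mu\le1$.

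\textbf{Case $\psi\circ\phi_{x^k}(d^{k-1})>0$.} Under \eqref{property 2}, $\beta_k\,\psi\circ\phi_{x^k}(d^{k-1})\le -\mu\,\psi\circ\phi_{x^k}(v(x^k))$. Substituting this bound yields
\[
\psi\circ\phi_{x^k}(d^k)\le(1-\mu)\,\psi\circ\phi_{x^k}(v(x^k)),
\]
which is \eqref{sufficient descent condition} with $c=1-\mu>0$, and it in particular gives strict descent.

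\textbf{Condition \eqref{property 1}.} The same computation with $\mu=1$ only gives $\psi\circ\phi_{x^k}(d^k)\le0$. Strictness then has to come from elsewhere. In the first case it is immediate, because $\psi\circ\phi_{x^k}(v(x^k))<0$. In the second case, strictness can fail only at the endpoint $\beta_k=-\psi\circ\phi_{x^k}(v(x^k))/\psi\circ\phi_{x^k}(d^{k-1})$, where the bound equals exactly $0$.

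\textbf{Main obstacle.} The expected difficulty is this borderline endpoint in \eqref{property 1}. My first attempt would be to argue that subadditivity of the max of piecewise-linear functions is strict unless the $\operatorname{argmax}$ indices and sign patterns of $v(x^k)$ and $d^{k-1}$ align, and to exploit the optimality of $v(x^k)$ to rule this out. If that fails, I would conclude descent in the weak sense $\psi\circ\phi_{x^k}(d^k)\le0$ at that endpoint, and treat strict descent as holding for $\beta_k$ in the half-open interval. This is consistent with how the paper treats the analogous result in \cite{perez2018nonlinear}. The remaining steps are routine.
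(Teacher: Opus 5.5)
Your argument for the sufficient-descent assertion under \eqref{property 2} is the paper's proof. It has the same base case $d^0=v(x^0)$, the same sublinearity bound $\psi\circ\phi_{x^k}(d^k)\le\psi\circ\phi_{x^k}(v(x^k))+\beta_k\,\psi\circ\phi_{x^k}(d^{k-1})$ for $\beta_k\ge0$, and the same two sign cases. The paper then disposes of the first assertion with ``the proof of the first statement is similar''. You are right that this hides a genuine problem at the right endpoint of the second case of \eqref{property 1}.

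Your first plan for that endpoint, forcing strict subadditivity through the optimality of $v(x^k)$, cannot succeed, because the assertion is false there. Take $m=n=1$ and the degenerate IVM $G_1=[f,f]$ with $f(x)=x^2/2$. Then $\psi\circ\phi_x(d)=f'(x)\,d$ is linear, so subadditivity holds with equality, and $v(x)=-f'(x)$.

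Choose $\rho=0.1$, $\sigma=1/2$, $x^0=1$, $d^0=v(x^0)=-1$ and $t_0=3/2$. This gives $x^1=-1/2$ and
$G_1(x^1)=[1/8,1/8]\preceq[0.35,0.35]=G_1(x^0)\oplus[\rho t_0,\rho t_0]\odot\psi\circ\phi_{x^0}(d^0)$.
Also $\psi\circ\phi_{x^1}(d^0)=1/2$ satisfies the curvature condition of both \eqref{Standard Wolfe condition in algorithm} and \eqref{Strong Wolfe condition in algorithm}; the latter holds with equality, $|1/2|=\sigma\,|{-1}|$.

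The point $x^1$ is not Pareto critical, since $\xi(x^1)=-1/8<0$. We have $\psi\circ\phi_{x^1}(d^0)=1/2>0$ and $\psi\circ\phi_{x^1}(v(x^1))=-1/4$, so \eqref{property 1} permits $\beta_1=1/2$. This yields $d^1=1/2-1/2=0$, which is not a descent direction.

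So commit to your fallback. The second case of \eqref{property 1} needs the half-open interval $\beta_k\in\bigl[0,-\psi\circ\phi_{x^k}(v(x^k))/\psi\circ\phi_{x^k}(d^{k-1})\bigr)$. Then $\beta_k\,\psi\circ\phi_{x^k}(d^{k-1})<-\psi\circ\phi_{x^k}(v(x^k))$, and your estimate gives $\psi\circ\phi_{x^k}(d^k)<0$. At the closed endpoint, only $\psi\circ\phi_{x^k}(d^k)\le0$ can be guaranteed.
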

\medskip
\begin{proof}
	We will prove the second statement. Note that at $k=0$, $d^0=v\left(x^0\right)$ and $\psi\circ\phi_{x^0}\left(d^0\right)=\psi\circ\phi_{x^0}\left(v\left(x^0\right)\right)$. Since $\mu\in\left[0,1\right)$, for $c=1-\mu$, we get 
	\[\psi\circ\phi_{x^0}\left(d^0\right)\leq c~\psi\circ\phi_{x^0}\left(v\left(x^0\right)\right).\]
	Therefore, the statement is true for $k=0$. For $k\geq1$, we have 
	\[\psi\circ\phi_{x^k}\left(d^k\right)=\psi\circ\phi_{x^k}\left(v\left(x^k\right)+\beta_k d^{k-1}\right)\leq \psi\circ\phi_{x^k}\left(v\left(x^k\right)\right)+\beta_k~ \psi\circ\phi_{x^k}\left(d^{k-1}\right).\]
	Suppose that $\psi\circ\phi_{x^k}\left(d^{k-1}\right)\leq0.$ Then, we get $\beta_k\in\left[0,\infty\right)$. Therefore, we get \[\psi\circ\phi_{x^k}\left(d^k\right)\leq \psi\circ\phi_{x^k}\left(v\left(x^k\right)\right)\leq \left(1-\mu\right)~\psi\circ\phi_{x^k}\left(v\left(x^k\right)\right)=c~\psi\circ\phi_{x^k}\left(v\left(x^k\right)\right).\]
	Suppose that  $\psi\circ\phi_{x^k}\left(d^{k-1}\right)>0.$ Then from \eqref{property 2}, we get 
	\begin{align*}
		\psi\circ\phi_{x^k}\left(d^k\right)&\leq \psi\circ\phi_{x^k}\left(v\left(x^k\right)\right)+\beta_k~ \psi\circ\phi_{x^k}\left(d^{k-1}\right)\\
		&\leq \psi\circ\phi_{x^k}\left(v\left(x^k\right)\right)-\mu\tfrac{\psi\circ\phi_{x^k}\left(v\left(x^k\right)\right)}{\psi\circ\phi_{x^k}\left(d^{k-1}\right)}~\psi\circ\phi_{x^k}\left(d^{k-1}\right)\\
		&= \left(1-\mu\right)~\psi\circ\phi_{x^k}\left(v\left(x^k\right)\right)\\
		&=c~\psi\circ\phi_{x^k}\left(v\left(x^k\right)\right).
	\end{align*}
	This completes the proof of the second statement. The proof of the first statement is similar to the proof of the second statement.
\end{proof}
\section{Convergence Analysis}\label{Convergence Analysis}
In this section, we examine various selections of the parameter $\beta_k$, together with suitable line search procedures, that lead to Algorithm \ref{Algorithm} with global convergence guarantees. If Algorithm \ref{Algorithm} terminates after a finite number of iterations, then {\bf Step 5} directly ensures that the returned point is Pareto critical. For the convergence analysis, we suppose that starting from an initial vector $x^0$, Algorithm \ref{Algorithm} produces an infinite sequence $\left\{x^k\right\}$ for which $\xi\left(x^k\right)\neq 0$ and $\left\|v\left(x^k\right)\right\|\neq0$ for all $k\in\mathbb{N}$. To study the convergence results of Algorithm \ref{Algorithm}, we consider the following assumptions.
\medskip
\begin{framed}
\begin{assumption}\label{assumption 1}
	\normalfont
	For all $i=1,2,\ldots,m$, $\nabla_{gH}G_i$ is $gH$-Lipschitz continuous with Lipschitz constant $L_i$ on the level set \[L_0:=\left\{x\in{\mathbb{R}}^n:G_i\left(x\right)\preceq G_i\left(x^0\right)\text{ for all }i=1,2,\ldots,m\right\}.\]
\end{assumption}
\medskip
\begin{assumption}\label{assumption 2}
	\normalfont
	All monotonically nonincreasing sequences in $G\left(L_0\right)$ are bounded below, i.e., if the sequence $\left\{\left(G_1\left(x^k\right),G_2\left(x^k\right),\ldots,G_m\left(x^k\right)\right)\right\}_{k\in{\mathbb{N}}}\subset G\left(L_0\right)$ and for all $i=1,2,\ldots,m$, $G_i\left(x^{k+1}\right)\preceq G_i\left(x^k\right)$ for all $k\in{\mathbb{N}}$, then there exist $\mathcal{G}_1,\mathcal{G}_2,\ldots,\mathcal{G}_m$ such that for all $i=1,2,\ldots,m$, $\mathcal{G}_i\preceq G_i\left(x^k\right)$ for all $k\in{\mathbb{N}}$.
\end{assumption}
\medskip
\begin{assumption}\label{assumption 3}
	\normalfont
	The level set $L_0:=\left\{x\in{\mathbb{R}}^n:G_i\left(x\right)\preceq G_i\left(x^0\right)\text{ for all }i=1,2,\ldots,m\right\}$ is bounded.
\end{assumption}
\end{framed}
\medskip
It is important to note that these three assumptions can be viewed as natural generalizations of the conditions commonly used in scalar or vector optimization. Based on Assumption \ref{assumption 1} and Assumption \ref{assumption 2}, we demonstrate that the iterative scheme $x^{k+1}:=x^k+t_kd^k$ satisfies a property comparable to the classical Zoutendijk criterion, provided that the step length $t_k$ at each iterate $x^k$, $k=0,1,,2,\ldots,$ is selected according to the standard Wolfe conditions \eqref{Standard Wolfe condition in algorithm}. The following result will later serve as a key component in establishing the global convergence of Algorithm \ref{Algorithm}.
\medskip
\begin{proposition}\label{Zountendijk condition proposition}
	Assume that Assumptions \eqref{assumption 1} and \eqref{assumption 2} hold. Consider the iteration scheme $x^{k+1}:=x^k+t_kd^k, k\geq0$, where $d^k$ is the descent direction for all $G_i$ at $x^k$ and $t_k$ satisfies the standard Wolfe conditions \eqref{Standard Wolfe condition 1} and \eqref{Standard Wolfe condition 2}. Then 
	\begin{equation}\label{Zountendjk condition}
		\underset{k\geq0}{\mathlarger\sum}\tfrac{\left[\psi\circ\phi_{x^k}\left(d^k\right)\right]^2}{\left\|d^k\right\|^2}<+\infty.
	\end{equation}
\end{proposition}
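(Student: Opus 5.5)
The plan follows the classical Zoutendijk argument of \cite{perez2018nonlinear}, adapted to the endpoint functions. Fix $k$ and write $\theta_k:=\psi\circ\phi_{x^k}(d^k)<0$. The curvature condition \eqref{Standard Wolfe condition 2} gives
\[
\psi\circ\phi_{x^{k+1}}(d^k)-\psi\circ\phi_{x^k}(d^k)\geq(\sigma-1)\,\theta_k>0 .
\]

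\textbf{Step 1: upper bound on the left-hand side via Lipschitz continuity.} By \eqref{gxi-lower-upper}, each $\overline{g}_x^i(d)$ equals $\max\{\nabla\underline{G}_i(x)^\top d,\nabla\overline{G}_i(x)^\top d\}$ when the $gH$-gradient is built from the endpoint gradients. Since $\max$ is $1$-Lipschitz and so is $\psi$, we get
\[
\psi\circ\phi_{x^{k+1}}(d^k)-\psi\circ\phi_{x^k}(d^k)\leq \max_i \max\Bigl\{\bigl\|\nabla\underline{G}_i(x^{k+1})-\nabla\underline{G}_i(x^k)\bigr\|,\;\bigl\|\nabla\overline{G}_i(x^{k+1})-\nabla\overline{G}_i(x^k)\bigr\|\Bigr\}\,\|d^k\| .
\]

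\textbf{Step 2: invoke Assumption \ref{assumption 1}.} Together with the remark relating $gH$-Lipschitz continuity to Lipschitz continuity of the endpoint functions, this bounds the right-hand side by $L\,t_k\|d^k\|^2$, with $L:=\max_i L_i$ up to a constant. Here one must check that the iterates remain in $L_0$. This follows from \eqref{Standard Wolfe condition 1}: since $\theta_k<0$, we have $G_i(x^{k+1})\preceq G_i(x^k)$, and by induction $x^k\in L_0$.

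Combining Steps 1 and 2 yields
\[
t_k\geq \frac{(\sigma-1)\,\theta_k}{L\|d^k\|^2}=\frac{(1-\sigma)\,|\theta_k|}{L\|d^k\|^2}.
\]

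\textbf{Step 3: the sufficient decrease condition.} Substituting this lower bound into \eqref{Standard Wolfe condition 1} and reading off the upper endpoint for any fixed $i$ (say $i=1$) gives
\[
\overline{G}_1(x^{k+1})\leq\overline{G}_1(x^k)+\rho t_k\theta_k\leq \overline{G}_1(x^k)-\frac{\rho(1-\sigma)}{L}\,\frac{\theta_k^2}{\|d^k\|^2}.
\]

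\textbf{Step 4: telescoping.} Summing from $0$ to $N$ yields
\[
\frac{\rho(1-\sigma)}{L}\sum_{k=0}^N\frac{\theta_k^2}{\|d^k\|^2}\leq \overline{G}_1(x^0)-\overline{G}_1(x^{N+1}).
\]
The sequence $\{G_i(x^k)\}$ is monotonically nonincreasing in $G(L_0)$, so Assumption \ref{assumption 2} provides $\mathcal{G}_1\preceq G_1(x^k)$. Hence $\overline{G}_1(x^{N+1})$ is bounded below by the upper endpoint of $\mathcal{G}_1$, the partial sums are bounded, and \eqref{Zountendjk condition} follows.

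\textbf{Main obstacle.} The delicate step is Step 1, namely relating $\psi\circ\phi$ at two points through the $gH$-gradient. The representation \eqref{gxi-lower-upper} mixes $\underline{\nabla_{gH}G_i}$ and $\overline{\nabla_{gH}G_i}$ componentwise through min and max, so the cleanest route is to verify
\[
\overline{g}_x^i(d)=\max\bigl\{\nabla\underline{G}_i(x)^\top d,\;\nabla\overline{G}_i(x)^\top d\bigr\}.
\]
This identity is not automatic, because the componentwise min/max can switch between coordinates. If it fails, the fallback is to work directly with \eqref{gxi-lower-upper}. Write $c=\tfrac12(\underline{\nabla_{gH}G_i}+\overline{\nabla_{gH}G_i})$ and $r=\tfrac12(\overline{\nabla_{gH}G_i}-\underline{\nabla_{gH}G_i})$. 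Both $c$ and $r$ are Lipschitz in $x$, since componentwise min and max of Lipschitz functions are Lipschitz, and then
\[
\bigl|\overline{g}_{x^{k+1}}^i(d)-\overline{g}_{x^k}^i(d)\bigr|\leq\bigl(\|\Delta c\|+\|\Delta r\|\bigr)\|d\|\leq 2L_i t_k\|d\|^2 .
\]
This suffices, since only the constant changes.
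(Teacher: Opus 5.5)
Your proposal is correct and follows essentially the paper's proof. Both arguments combine the curvature condition with the Lipschitz estimate $\psi\circ\phi_{x^{k+1}}(d^k)-\psi\circ\phi_{x^k}(d^k)\le \max_i 2L_i\,t_k\|d^k\|^2$, then sum the sufficient-decrease condition against the lower bound from Assumption \ref{assumption 2}; telescoping only $\overline{G}_1$, as you do, is a slightly cleaner version of the paper's interval-valued sum. The identity you flagged does fail in general, since $\overline{g}_x^i(d)=\sum_{j}\max\{\partial_j\underline{G}_i(x)\,d_j,\ \partial_j\overline{G}_i(x)\,d_j\}$ is only $\geq\max\{\nabla\underline{G}_i(x)^\top d,\nabla\overline{G}_i(x)^\top d\}$, so your fallback via $c$ and $r$ is the step to use, and it gives exactly the paper's bound.
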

\medskip
\begin{proof}
	By the standard Wolfe condition \eqref{Standard Wolfe condition 2}, we get 
	\begin{align}\label{proof of zoutendjk condition eq 1}
		\left(\sigma-1\right)~\psi\circ\phi_{x^k}\left(d^k\right)\leq\psi\circ\phi_{x^{k+1}}\left(d^k\right)-\psi\circ\phi_{x^k}\left(d^k\right)
	\end{align}
	We have 
	\begin{align*}
		& \psi\circ\phi_{x^{k+1}}\left(d^k\right)-\psi\circ\phi_{x^k}\left(d^k\right)\\
		=&\underset{i=1,2,\ldots,m}{\max}\tfrac{1}{2}\left[\left(\underline{\nabla_{gH}G_i}\left(x^{k+1}\right)+\overline{\nabla_{gH}G_i}\left(x^{k+1}\right)\right)^\top d^k+\left|\overline{\nabla_{gH}G_i}\left(x^{k+1}\right)-\underline{\nabla_{gH}G_i}\left(x^{k+1}\right)\right|^\top \left|d^k\right|\right]\\
		&-\underset{i=1,2,\ldots,m}{\max}\tfrac{1}{2}\left[\left(\underline{\nabla_{gH}G_i}\left(x^{k}\right)+\overline{\nabla_{gH}G_i}\left(x^{k}\right)\right)^\top d^k+\left|\overline{\nabla_{gH}G_i}\left(x^{k}\right)-\underline{\nabla_{gH}G_i}\left(x^{k}\right)\right|^\top \left|d^k\right|\right]\\
		\leq &\underset{i=1,2,\ldots,m}{\max}\tfrac{1}{2}\biggl[\left(\underline{\nabla_{gH}G_i}\left(x^{k+1}\right)+\overline{\nabla_{gH}G_i}\left(x^{k+1}\right)\right)^\top d^k+\left|\overline{\nabla_{gH}G_i}\left(x^{k+1}\right)-\underline{\nabla_{gH}G_i}\left(x^{k+1}\right)\right|^\top \left|d^k\right|\\
		&\hspace{2cm}-\left(\underline{\nabla_{gH}G_i}\left(x^{k}\right)+\overline{\nabla_{gH}G_i}\left(x^{k}\right)\right)^\top d^k-\left|\overline{\nabla_{gH}G_i}\left(x^{k}\right)-\underline{\nabla_{gH}G_i}\left(x^{k}\right)\right|^\top \left|d^k\right|\biggr]\\
		\leq &\underset{i=1,2,\ldots,m}{\max}\tfrac{1}{2}\biggl[\left(\underline{\nabla_{gH}G_i}\left(x^{k+1}\right)-\underline{\nabla_{gH}G_i}\left(x^{k}\right)+\overline{\nabla_{gH}G_i}\left(x^{k+1}\right)-\overline{\nabla_{gH}G_i}\left(x^{k}\right)\right)^\top d^k\\
		&\hspace{2cm}+\left|\overline{\nabla_{gH}G_i}\left(x^{k+1}\right)-\overline{\nabla_{gH}G_i}\left(x^{k}\right)+\underline{\nabla_{gH}G_i}\left(x^{k}\right)-\underline{\nabla_{gH}G_i}\left(x^{k+1}\right)\right|^\top \left|d^k\right|\biggr]\\
		\leq &\underset{i=1,2,\ldots,m}{\max}\tfrac{1}{2}\biggl[\left\|\underline{\nabla_{gH}G_i}\left(x^{k+1}\right)-\underline{\nabla_{gH}G_i}\left(x^{k}\right)+\overline{\nabla_{gH}G_i}\left(x^{k+1}\right)-\overline{\nabla_{gH}G_i}\left(x^{k}\right)\right\| \left\|d^k\right\|\\
		&\hspace{2cm}+\left\|\overline{\nabla_{gH}G_i}\left(x^{k+1}\right)-\overline{\nabla_{gH}G_i}\left(x^{k}\right)+\underline{\nabla_{gH}G_i}\left(x^{k}\right)-\underline{\nabla_{gH}G_i}\left(x^{k+1}\right)\right\| \left\|d^k\right\|\biggr]\\
		\leq &\underset{i=1,2,\ldots,m}{\max}\tfrac{1}{2}\left[2\left(\left\|\underline{\nabla_{gH}G_i}\left(x^{k+1}\right)-\underline{\nabla_{gH}G_i}\left(x^{k}\right)\right\|+\left\|\overline{\nabla_{gH}G_i}\left(x^{k+1}\right)-\overline{\nabla_{gH}G_i}\left(x^{k}\right)\right\|\right) \left\|d^k\right\|\right]\\
		=&\underset{i=1,2,\ldots,m}{\max}\left(\left\|\underline{\nabla_{gH}G_i}\left(x^{k+1}\right)-\underline{\nabla_{gH}G_i}\left(x^{k}\right)\right\|+\left\|\overline{\nabla_{gH}G_i}\left(x^{k+1}\right)-\overline{\nabla_{gH}G_i}\left(x^{k}\right)\right\|\right) \left\|d^k\right\|.
	\end{align*}
	Therefore, using Assumption \eqref{assumption 1}, we obtain 
	\begin{align}\label{proof of zoutendjk condition eq 2}
		\psi\circ\phi_{x^{k+1}}\left(d^k\right)-\psi\circ\phi_{x^k}\left(d^k\right)\leq \underset{i=1,2,\ldots,m}{\max} 2L_i t_k \left\|d^k\right\|^2=Lt_k \left\|d^k\right\|^2, \text{ where } L:=\underset{i=1,2,\ldots,m}{\max} 2L_i.
	\end{align}
	From \eqref{proof of zoutendjk condition eq 1} and \eqref{proof of zoutendjk condition eq 2}, we get 
	\[\left(\sigma-1\right)~\psi\circ\phi_{x^k}\left(d^k\right)\leq Lt_k \left\|d^k\right\|^2.\]
	Since $\psi\circ\phi_{x^k}\left(d^k\right)<0$ and $\left\|d^k\right\|\neq0$, we obtain 
	\begin{align}\label{proof of zoutendjk condition eq 3}
		\tfrac{\left[\psi\circ\phi_{x^k}\left(d^k\right)\right]^2}{\left\|d^k\right\|^2}\leq L t_k \tfrac{\psi\circ\phi_{x^k}\left(d^k\right)}{\sigma-1}
	\end{align}
	By the standard Wolfe condition \eqref{Standard Wolfe condition 1}, we have that the sequence $\left\{G_i\left(x^k\right)\right\}_{k\geq0}$ is monotonic decreasing and that \[G_i\left(x^{k+1}\right)\ominus_{gH}G_i\left(x^0\right)\preceq\left[\rho,\rho\right]\odot\sum_{l=0}^{k}t_l\psi\circ\phi_{x^l}\left(d^l\right)\text{ for all }k\geq0 \text{ and for all } i=1,2,\ldots,m.\]
	Then, by Assumption \eqref{assumption 2}, there exists $\mathcal{G}_i\in{\it I}\left({\mathbb{R}}\right)$ such that 
	\[\mathcal{G}_i\ominus_{gH}G_i\left(x^0\right)\preceq\left[\rho,\rho\right]\odot\sum_{l=0}^{k}t_l\psi\circ\phi_{x^l}\left(d^l\right)\text{ for all }k\geq0 \text{ and for all } i=1,2,\ldots,m.\]
	This implies that 
	\[\tfrac{\mathcal{G}_i\ominus_{gH}G_i\left(x^0\right)}{\sigma-1}\succeq\left[\rho,\rho\right]\odot\sum_{l=0}^{k}t_l\tfrac{\psi\circ\phi_{x^l}\left(d^l\right)}{\sigma-1}\text{ for all }k\geq0 \text{ and for all } i=1,2,\ldots,m,\]
	which implies that 
	$\underset{k\geq0}{\sum}t_k\tfrac{\psi\circ\phi_{x^k}\left(d^k\right)}{\sigma-1}<+\infty.$ 
	Hence, from \eqref{proof of zoutendjk condition eq 3}, we conclude that $\underset{k\geq0}{\mathlarger\sum}\tfrac{\left[\psi\circ\phi_{x^k}\left(d^k\right)\right]^2}{\left\|d^k\right\|^2}<+\infty,$
	and this completes the proof.
\end{proof}
\medskip
\begin{remark}
	\normalfont
	Condition \eqref{Zountendjk condition}, often referred to as the Zoutendijk condition, serves as a key tool for establishing convergence properties of many line search-based optimization schemes. In particular, when the SD method employs a line search that enforces the standard Wolfe conditions, it follows that
	 $\underset{k\to\infty}{\lim}\left\|v\left(x^k\right)\right\|=0.$ In fact, consider the iterative scheme $x^{k+1}:=x^k+t_kd^k$ with $d^k=v\left(x^k\right)$, and assume $t_k$ satisfies the standard Wolfe condition \eqref{Standard Wolfe condition 1} for all $k\geq0$. If $v\left(x^k\right)=0$ for some $k$, the result trivially holds. Since $\xi\left(x^k\right):=\psi\circ\phi_{x^k}\left(v\left(x^k\right)\right)+\tfrac{1}{2}\left\|v\left(x^k\right)\right\|^2<0,$ it follows that \[\tfrac{\left[\psi\circ\phi_{x^k}\left(d^k\right)\right]^2}{\left\|d^k\right\|^2}=\tfrac{\left[\psi\circ\phi_{x^k}\left(v\left(x^k\right)\right)\right]^2}{\left\|v\left(x^k\right)\right\|^2}\geq \tfrac{1}{4}\left\|v\left(x^k\right)\right\|^2.\]
	By adding this inequality across all iterations $k$, we obtain the Zoutendijk condition that 
	\[\underset{k\geq0}{\sum}\left\|v\left(x^k\right)\right\|^2<+\infty, \text{ which implies that } \underset{k\to \infty}{\lim}\left\|v\left(x^k\right)\right\|=0.\]  
\end{remark}
Before moving towards on the convergence results with respect to the specific choices of $\beta_k$, we give the main convergence result of Algorithm \ref{Algorithm}. Without imposing an explicit restriction on $\beta_k$, the main convergence result of Algorithm \ref{Algorithm} is as follows.

\medskip
\begin{theorem}\label{General convergence theorem}
	Consider the Algorithm \ref{Algorithm}. Assume that Assumptions \eqref{assumption 1} and \eqref{assumption 2} hold and that 
	\begin{align}\label{general convergence sum eq 1}
		\underset{k\geq0}{\sum}\tfrac{1}{\left\|d^k\right\|^2}=+\infty.
	\end{align}
\begin{enumerate}
	\item[(i)] If $d^k$ satisfies the sufficient descent condition \eqref{sufficient descent condition} and $t_k$ satisfies the standard Wolfe conditions \eqref{Standard Wolfe condition in algorithm}, then $\underset{k\to \infty}{\liminf}\left\|v\left(x^k\right)\right\|=0.$
		\item[(ii)] If $\beta_k\geq0$, $d^k$ is a descent direction for all $G_i$ at $x^k$, and $t_k$ satisfies the strong Wolfe conditions \eqref{Strong Wolfe condition in algorithm}, then $\underset{k\to \infty}{\liminf}\left\|v\left(x^k\right)\right\|=0.$
\end{enumerate}
\end{theorem}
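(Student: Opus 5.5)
We argue by contradiction in both parts. Suppose $\liminf_{k\to\infty}\|v(x^k)\|>0$. Then there is $\gamma>0$ with $\|v(x^k)\|\geq\gamma$ for all $k$. Since $\xi(x^k)=\psi\circ\phi_{x^k}(v(x^k))+\tfrac12\|v(x^k)\|^2<0$ by Lemma \ref{descent direction finding lemma}, we get the key lower bound
\[
-\psi\circ\phi_{x^k}\left(v(x^k)\right)\geq \tfrac12\|v(x^k)\|^2\geq \tfrac{\gamma^2}{2}\quad\text{for all }k.
\]
In both parts $d^k$ is a descent direction and Assumptions \ref{assumption 1} and \ref{assumption 2} hold. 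Also, the strong Wolfe conditions imply the first standard Wolfe condition, and the curvature part is what Proposition \ref{Zountendijk condition proposition} uses. So the Zoutendijk condition \eqref{Zountendjk condition} is available: with $r_k:=[\psi\circ\phi_{x^k}(d^k)]^2/\|d^k\|^2$ we have $\sum_k r_k<\infty$. The contradiction will always be with \eqref{general convergence sum eq 1}.

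\emph{Part (i).} By the sufficient descent condition \eqref{sufficient descent condition},
\[
\psi\circ\phi_{x^k}(d^k)\leq c\,\psi\circ\phi_{x^k}(v(x^k))\leq -\tfrac{c\gamma^2}{2},
\]
so $[\psi\circ\phi_{x^k}(d^k)]^2\geq c^2\gamma^4/4$. Hence $r_k\geq \tfrac{c^2\gamma^4}{4}\cdot\tfrac{1}{\|d^k\|^2}$. Summing and using \eqref{Zountendjk condition} gives $\sum_k 1/\|d^k\|^2<\infty$, contradicting \eqref{general convergence sum eq 1}. This part is routine.

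\emph{Part (ii).} First, from $d^k=v(x^k)+\beta_kd^{k-1}$, the subadditivity and positive homogeneity of $\psi\circ\phi_{x^k}$, and $\beta_k\geq0$, we get
\[
\tfrac{\gamma^2}{2}\leq -\psi\circ\phi_{x^k}(v(x^k))\leq \left|\psi\circ\phi_{x^k}(d^k)\right|+\beta_k\left|\psi\circ\phi_{x^k}(d^{k-1})\right|.
\]
Second, we use the curvature part of the strong Wolfe condition at iteration $k-1$ in its intended form $|\psi\circ\phi_{x^{k}}(d^{k-1})|\leq\sigma|\psi\circ\phi_{x^{k-1}}(d^{k-1})|$. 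This replaces the last term by $\sigma\beta_k|\psi\circ\phi_{x^{k-1}}(d^{k-1})|$. Third, we square and apply Lemma \ref{inequality lemma}(ii) with some fixed $\alpha$, which gives
\[
\tfrac{\gamma^4}{4}\leq C_1 r_k\|d^k\|^2+C_2\sigma^2 r_{k-1}\,\beta_k^2\|d^{k-1}\|^2 .
\]
Fourth, we bound $\beta_k^2\|d^{k-1}\|^2=\|d^k-v(x^k)\|^2\leq 2\|d^k\|^2+2\|v(x^k)\|^2$ and divide by $\|d^k\|^2$. This yields
\[
\tfrac{\gamma^4}{4\|d^k\|^2}\leq C_1 r_k+2C_2\sigma^2 r_{k-1}+2C_2\sigma^2 r_{k-1}\tfrac{\|v(x^k)\|^2}{\|d^k\|^2}.
\]
Summing over $k$ and using $\sum_k r_k<\infty$ should give $\sum_k 1/\|d^k\|^2<\infty$, again contradicting \eqref{general convergence sum eq 1}.

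The main obstacle is the last term in the fourth step, and it needs two facts.
\begin{enumerate}
\item[(a)] $\{v(x^k)\}$ is bounded. I plan to get this from $\|v\|^2\leq -2\psi\circ\phi_{x}(v)\leq 2\max_i\|\nabla_{gH}G_i(x)\|\,\|v\|$. This needs bounded $gH$-gradients along the iterates, which I would deduce from Assumption \ref{assumption 1} on $L_0$ together with $x^k\in L_0$. If that does not suffice, one may invoke Lemma \ref{descent direction finding lemma}(iv) on a bounded level set.
\item[(b)] $\|d^k\|$ is bounded away from zero. I plan to get this from $\|d^k\|\geq |\psi\circ\phi_{x^k}(d^k)|/M$, where $M$ bounds the gradients as in (a), combined with the first display.
\end{enumerate}
If the direct route fails here, the fallback is to split into the indices $k$ where $|\psi\circ\phi_{x^k}(d^k)|\geq\gamma^2/4$, which are handled exactly as in part (i), and the complementary indices, where the second term forces $\beta_k|\psi\circ\phi_{x^{k-1}}(d^{k-1})|\geq\gamma^2/(4\sigma)$ and one chains the estimate back one step.
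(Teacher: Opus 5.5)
Part (i) is fine and matches the paper. Part (ii) has a genuine gap, at exactly the point you flagged. In your first display you write $-\psi\circ\phi_{x^k}(v(x^k))\ge\gamma^2/2$ instead of $\ge\frac12\|v(x^k)\|^2$. That discards the homogeneity in $\|v(x^k)\|$ which would make the term $2C_2\sigma^2 r_{k-1}\|v(x^k)\|^2/\|d^k\|^2$ harmless. The repairs you then propose are not available under the hypotheses.

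For (a), the theorem assumes only Assumptions~\ref{assumption 1} and~\ref{assumption 2}. Lemma~\ref{descent direction finding lemma}(iv) needs a compact set, i.e.\ Assumption~\ref{assumption 3}. Lipschitz continuity of the $gH$-gradients on a possibly unbounded $L_0$ need not make them bounded there.

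For (b), your first display only bounds the sum $|\psi\circ\phi_{x^k}(d^k)|+\beta_k|\psi\circ\phi_{x^k}(d^{k-1})|$ from below. Part (ii) has no sufficient descent condition, so $|\psi\circ\phi_{x^k}(d^k)|$ itself may be arbitrarily small. Hence no lower bound on $\|d^k\|$ follows.

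The fallback splitting does not escape this either. On the complementary indices, the inequality $\beta_k^2\|d^{k-1}\|^2\le 2\|d^k\|^2+2\|v(x^k)\|^2$ reproduces the same term $r_{k-1}\|v(x^k)\|^2/\|d^k\|^2$.

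The missing idea is simply to keep $\|v(x^k)\|$ rather than $\gamma$ until the end. Start from $\frac12\|v(x^k)\|^2\le|\psi\circ\phi_{x^k}(d^k)|+\sigma\beta_k|\psi\circ\phi_{x^{k-1}}(d^{k-1})|$. Your steps three and four then give
\[
\tfrac{\|v(x^k)\|^4}{4\|d^k\|^2}\le C_1r_k+2C_2\sigma^2r_{k-1}+2C_2\sigma^2r_{k-1}\tfrac{\|v(x^k)\|^2}{\|d^k\|^2}.
\]
Since $\sum_k r_k<\infty$ forces $r_{k-1}\to0$, for all large $k$ you have $2C_2\sigma^2r_{k-1}\le\gamma^2/8\le\|v(x^k)\|^2/8$. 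So the last term is at most $\|v(x^k)\|^4/(8\|d^k\|^2)$ and can be absorbed into the left side. This leaves $\gamma^4/(8\|d^k\|^2)\le C_1r_k+2C_2\sigma^2r_{k-1}$, which is summable and contradicts \eqref{general convergence sum eq 1}. Neither an upper bound on $v(x^k)$ nor a lower bound on $\|d^k\|$ is needed.

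This is exactly the paper's argument:
\begin{enumerate}
\item[(1)] It applies Lemma~\ref{inequality lemma}(i) with $\alpha=1$ to get $[\psi\circ\phi_{x^k}(d^k)]^2+\frac{\beta_k^2}{2}[\psi\circ\phi_{x^{k-1}}(d^{k-1})]^2\ge c_1\|v(x^k)\|^4$, with $c_1=\frac{1}{4(1+2\sigma^2)}$.
\item[(2)] It derives $r_k+r_{k-1}\ge\frac{\|v(x^k)\|^2}{\|d^k\|^2}\bigl(c_1\|v(x^k)\|^2-r_{k-1}\bigr)$.
\item[(3)] It absorbs the negative term using $r_{k-1}\le\frac{c_1}{2}\gamma^2$ for large $k$.
\end{enumerate}
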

\medskip
\begin{proof}
	If possible, assume that there exists a constant $\gamma>0$ such that 
	\[\left\|v\left(x^k\right)\right\|\geq \gamma\text{ for all } k\geq 0.\]
	We now prove item (i). Note that by \[\psi\circ\phi_{x^k}\left(v\left(x^k\right)\right)+\tfrac{1}{2}\left\|v\left(x^k\right)\right\|<0\]
	 and the sufficient descent condition \eqref{sufficient descent condition}, we get 
	 \[\tfrac{c^2\gamma^4}{4\left\|d^k\right\|^2}\leq \tfrac{c^2\left\|v\left(x^k\right)\right\|^4}{4\left\|d^k\right\|^2}\leq \tfrac{\left[c~\psi\circ\phi_{x^k}\left(v\left(x^k\right)\right)\right]^2}{\left\|d^k\right\|^2}\leq \tfrac{\left[c~\psi\circ\phi_{x^k}\left(d^k\right)\right]^2}{\left\|d^k\right\|^2}.\]
	  Since under the hypothesis of item (i), the Zoutendijk condition \eqref{Zountendjk condition} holds, we have a contradiction to \eqref{general convergence sum eq 1}. Hence, item (i) is proved.
	  
	  Now, we consider item (ii). We have \[-\beta_k d^{k-1}=-d^k+v\left(x^k\right).\]
	  Then, 
	  \begin{align*}
	  	0&\leq \beta_k^2\left\|d^{k-1}\right\|^2\leq \left(\left\|d^k\right\|+\left\|v\left(x^k\right)\right\|\right)^2\\
	  	&\leq \left(\left\|d^k\right\|+\left\|v\left(x^k\right)\right\|\right)^2+\left(\left\|d^k\right\|-\left\|v\left(x^k\right)\right\|\right)^2\\
	  	&\leq 2\left\|d^k\right\|^2+2\left\|v\left(x^k\right)\right\|^2.
	  \end{align*}
	  Therefore, we get 
	  \begin{align}\label{proof of general convergence thm eq. star1}
	  	\left\|d^k\right\|^2\geq -\left\|v\left(x^k\right)\right\|^2+\tfrac{\beta_k^2}{2}\left\|d^{k-1}\right\|^2.
	  \end{align}
	  On the other hand, by the definition of the conjugate direction $d^k$ given in \eqref{conjugate direction in algorithm} and the positiveness of $\beta_k$, we get 
	  \[\psi\circ\phi_{x^k}\left(d^k\right)\leq\psi\circ\phi_{x^k}\left(v\left(x^k\right)\right)+\beta_k~\psi\circ\phi_{x^k}\left(d^{k-1}\right).\]
	  Then,
	  \begin{align*}
	  	0&\leq -\psi\circ\phi_{x^k}\left(v\left(x^k\right)\right)\leq-\psi\circ\phi_{x^k}\left(d^k\right)+\beta_k~\psi\circ\phi_{x^k}\left(d^{k-1}\right)\\
	  	&\leq -\psi\circ\phi_{x^k}\left(d^k\right)-\sigma\beta_k~\psi\circ\phi_{x^{k-1}}\left(d^{k-1}\right) ~~~\left[\text{using the strong Wolfe condition }\eqref{Strong Wolfe condition 2}\right]. 
	  \end{align*}
	  Therefore, we get 
	  \begin{align*}
	  	&\left[\psi\circ\phi_{x^k}\left(v\left(x^k\right)\right)\right]^2\\
	  	\leq& \left[\psi\circ\phi_{x^k}\left(d^k\right)\right]^2+\sigma^2\beta_k^2\left[\psi\circ\phi_{x^{k-1}}\left(d^{k-1}\right)\right]^2+ 2\sigma~\psi\circ\phi_{x^k}\left(d^k\right)\beta_k~\psi\circ\phi_{x^{k-1}}\left(d^{k-1}\right).
	  \end{align*}
	   From item (i) of Lemma \ref{inequality lemma} with $\alpha=1$, $p=\sigma~\psi\circ\phi_{x^k}\left(d^k\right)$, and  $q=\beta_k~\psi\circ\phi_{x^{k-1}}\left(d^{k-1}\right)$, we get 
	    \begin{align*}
	   	&\left[\psi\circ\phi_{x^k}\left(v\left(x^k\right)\right)\right]^2\\
	   	\leq& \left[\psi\circ\phi_{x^k}\left(d^k\right)\right]^2+\sigma^2\beta_k^2\left[\psi\circ\phi_{x^{k-1}}\left(d^{k-1}\right)\right]^2+ 2\sigma^2\left[\psi\circ\phi_{x^k}\left(d^k\right)\right]^2+\tfrac{\beta_k^2}{2}\left[\psi\circ\phi_{x^{k-1}}\left(d^{k-1}\right)\right]^2\\
	   	\leq & \left(1+2\sigma^2\right)\left(\left[\psi\circ\phi_{x^k}\left(d^k\right)\right]^2+\tfrac{\beta_k^2}{2}\left[\psi\circ\phi_{x^{k-1}}\left(d^{k-1}\right)\right]^2\right).
	   \end{align*}
	   Since $d^k$ is a descent direction for all $G_i$ at $x^k$, we obtain 
	   \[\xi\left(x^k\right):=\psi\circ\phi_{x^k}\left(v\left(x^k\right)\right)+\tfrac{1}{2}\left\|v\left(x^k\right)\right\|^2<0.\]
	   Therefore, we get 
	   \begin{align}\label{proof of general convergence thm star2}
	   	\left[\psi\circ\phi_{x^k}\left(d^k\right)\right]^2+\tfrac{\beta_k^2}{2}\left[\psi\circ\phi_{x^{k-1}}\left(d^{k-1}\right)\right]^2\geq c_1\left\|v\left(x^k\right)\right\|^4, \text{ where } c_1:=\tfrac{1}{4\left(1+2\sigma^2\right)}.
	   \end{align}
	   Note that 
	   \begin{align*}
	   	& \tfrac{\left[\psi\circ\phi_{x^k}\left(d^k\right)\right]^2 }{\left\|d^k\right\|^2}+\tfrac{ \left[\psi\circ\phi_{x^{k-1}}\left(d^{k-1}\right)\right]^2}{\left\|d^{k-1}\right\|^2}\\
	   	=& \tfrac{1}{\left\|d^k\right\|^2}\left\{\left[\psi\circ\phi_{x^k}\left(d^k\right)\right]^2 +\tfrac{\left\|d^k\right\|^2}{\left\|d^{k-1}\right\|^2}\left[\psi\circ\phi_{x^{k-1}}\left(d^{k-1}\right)\right]^2\right\}\\
	   	\geq & \tfrac{1}{\left\|d^k\right\|^2}\left\{\left[\psi\circ\phi_{x^k}\left(d^k\right)\right]^2 +\left(\tfrac{\beta_k^2}{2}-\tfrac{\left\|v\left(x^k\right)\right\|^2}{\left\|d^{k-1}\right\|^2}\right)\left[\psi\circ\phi_{x^{k-1}}\left(d^{k-1}\right)\right]^2\right\}~~\left[\text{using }\eqref{proof of general convergence thm eq. star1}\right]\\
	   	\geq & \tfrac{1}{\left\|d^k\right\|^2}\left\{c_1\left\|v\left(x^k\right)\right\|^4 -\tfrac{\left\|v\left(x^k\right)\right\|^2}{\left\|d^{k-1}\right\|^2}\left[\psi\circ\phi_{x^{k-1}}\left(d^{k-1}\right)\right]^2\right\}~~\left[\text{using }\eqref{proof of general convergence thm star2}\right]\\
	   	= & \tfrac{\left\|v\left(x^k\right)\right\|^2}{\left\|d^k\right\|^2}\left\{c_1\left\|v\left(x^k\right)\right\|^2 -\tfrac{\left[\psi\circ\phi_{x^{k-1}}\left(d^{k-1}\right)\right]^2}{\left\|d^{k-1}\right\|^2}\right\}.
	   \end{align*}
	   Since under the hypothesis of item (ii), the Zoutendijk condition \eqref{Zountendjk condition} holds, and it implies that \[\tfrac{\left[\psi\circ\phi_{x^k}\left(d^k\right)\right]^2 }{\left\|d^k\right\|^2}\to 0\text{ as } k\to\infty.\]
	   Therefore, we get 
	   \begin{align}\label{prof of general convergence thm star3}
	   	 \tfrac{\left[\psi\circ\phi_{x^{k-1}}\left(d^{k-1}\right)\right]^2 }{\left\|d^{k-1}\right\|^2}\leq \tfrac{c_1}{2}\gamma^2\leq \tfrac{c_1}{2}\left\|v\left(x^k\right)\right\|^2.
	   \end{align}
	   So, using \eqref{prof of general convergence thm star3}, we get 
	   \begin{align*}
	   	 \tfrac{\left[\psi\circ\phi_{x^k}\left(d^k\right)\right]^2 }{\left\|d^k\right\|^2}+\tfrac{ \left[\psi\circ\phi_{x^{k-1}}\left(d^{k-1}\right)\right]^2}{\left\|d^{k-1}\right\|^2}
	   	 &\geq \tfrac{\left\|v\left(x^k\right)\right\|^2}{\left\|d^k\right\|^2}\left(c_1\left\|v\left(x^k\right)\right\|^2 -\tfrac{c_1}{2}\left\|v\left(x^k\right)\right\|^2\right)\\
	   	  &=\tfrac{c_1}{2}\tfrac{\left\|v\left(x^k\right)\right\|^4}{\left\|d^k\right\|^2}\geq \tfrac{c_1\gamma^4}{2}\tfrac{1}{\left\|d^k\right\|^2}.
	   \end{align*}
	   Therefore, we get 
	   \[\underset{k\geq0}{\sum}\tfrac{1}{\left\|d^k\right\|^2}<+\infty,\]
	   which contradicts to \eqref{general convergence sum eq 1}, and this completes the proof.
\end{proof}
\medskip
In the following, we analyze the convergence properties of Algorithm \ref{Algorithm} with respect to the specific choices of parameter $\beta_k$. 
\subsection{Fletcher-Reeves}
The FR parameter is given by 
\[\beta_k^{FR}:=\tfrac{\psi\circ\phi_{x^k}\left(v\left(x^k\right)\right)}{\psi\circ\phi_{x^{k-1}}\left(v\left(x^{k-1}\right)\right)}.\]
 In the following result, we prove under a suitable hypothesis that if the parameter $\beta_k$ in Algorithm \ref{Algorithm} is bounded in magnitude by any fraction of $\beta_k^{FR}$, then Algorithm \ref{Algorithm} has global convergence.

\medskip
\begin{theorem}\label{global convergence thm for FR}
	Consider the Algorithm \ref{Algorithm} and let $0\leq \delta<1$.
	\begin{enumerate}
		\item[(i)] Let Assumption \ref{assumption 1} and Assumption \ref{assumption 2} hold. If $\left|\beta_k\right|\leq \delta\beta_k^{FR}$, $d^k$ satisfies the sufficient descent condition \eqref{sufficient descent condition}, and $t_k$ satisfies the standard Wolfe conditions \eqref{Standard Wolfe condition in algorithm}, then $\underset{k\to\infty}{\liminf}\left\|v\left(x^k\right)\right\|=0.$
		\item[(ii)] Let Assumption \ref{assumption 1} and Assumption \ref{assumption 3} hold. If $0\leq\beta_k\leq \delta\beta_k^{FR}$, $d^k$ is a descent direction for all $G_i$ at $x^k$, and $t_k$ satisfies the strong Wolfe conditions \eqref{Strong Wolfe condition in algorithm}, then $\underset{k\to\infty}{\liminf}\left\|v\left(x^k\right)\right\|=0.$
	\end{enumerate} 
\end{theorem}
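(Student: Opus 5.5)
The plan is to reduce both items to Theorem \ref{General convergence theorem}. It suffices to argue by contradiction. Suppose $\left\|v\left(x^k\right)\right\|\geq\gamma>0$ for all $k$, and show that then $\sum_{k\geq0}1/\left\|d^k\right\|^2=+\infty$. Item (i) of Theorem \ref{General convergence theorem} (under Assumptions \ref{assumption 1}, \ref{assumption 2}, sufficient descent and standard Wolfe) or item (ii) (under $\beta_k\geq0$, descent and strong Wolfe) then yields $\liminf_{k\to\infty}\left\|v\left(x^k\right)\right\|=0$, a contradiction.

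The key estimate is a recursive bound on $\left\|d^k\right\|^2$. Write $\theta_k:=-\psi\circ\phi_{x^k}\left(v\left(x^k\right)\right)>0$, so that $\beta_k^{FR}=\theta_k/\theta_{k-1}$.
\begin{itemize}
\item Since $\xi\left(x^k\right)<0$, we have $\theta_k\geq\tfrac12\left\|v\left(x^k\right)\right\|^2$.
\item The optimality condition for the strongly convex problem \eqref{unconstrained min} gives $v\left(x^k\right)\in-\partial\left(\psi\circ\phi_{x^k}\right)\left(v\left(x^k\right)\right)$. Using positive homogeneity of $\psi\circ\phi_{x^k}$, this yields $\theta_k=\left\|v\left(x^k\right)\right\|^2$, or at least $\theta_k\leq C\left\|v\left(x^k\right)\right\|^2$.
\item From $d^k=v\left(x^k\right)+\beta_kd^{k-1}$ and Lemma \ref{inequality lemma}(ii) with a suitable $\alpha$, I get
\[\left\|d^k\right\|^2\leq\left(1+\tfrac{1}{2\alpha^2}\right)\left\|v\left(x^k\right)\right\|^2+\left(1+2\alpha^2\right)\delta^2\left(\tfrac{\theta_k}{\theta_{k-1}}\right)^2\left\|d^{k-1}\right\|^2.\]
Choose $\alpha$ so that $\left(1+2\alpha^2\right)\delta^2\leq1$, which is possible since $\delta<1$.
\item Divide by $\theta_k^2$ and put $u_k:=\left\|d^k\right\|^2/\theta_k^2$. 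This gives $u_k\leq u_{k-1}+c\left\|v\left(x^k\right)\right\|^2/\theta_k^2\leq u_{k-1}+4c/\left\|v\left(x^k\right)\right\|^2\leq u_{k-1}+4c/\gamma^2$.
\item Hence $u_k\leq u_0+4ck/\gamma^2$, i.e. $\left\|d^k\right\|^2\leq\theta_k^2\left(a+bk\right)$.
\end{itemize}

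The main obstacle is obtaining an upper bound on $\theta_k$, hence on $\left\|v\left(x^k\right)\right\|$, so that $\left\|d^k\right\|^2=O(k)$ and $\sum1/\left\|d^k\right\|^2$ diverges like the harmonic series. Here the two items differ.

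For item (ii), Assumption \ref{assumption 3} makes $L_0$ bounded. The iterates stay in $L_0$ by the Armijo-type part of \eqref{Strong Wolfe condition in algorithm}, and Lemma \ref{descent direction finding lemma}(iv) then gives $\left\|v\left(x^k\right)\right\|\leq M$, hence $\theta_k$ is bounded. Here $\beta_k\geq0$, so Theorem \ref{General convergence theorem}(ii) applies.

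For item (i), no boundedness assumption is available, so I would instead use $\theta_k\leq\left\|v\left(x^k\right)\right\|^2$ directly. The recursion in $u_k$ only needs $\left\|v\left(x^k\right)\right\|^2/\theta_k^2\leq4/\gamma^2$. The issue is the final step: concluding $\sum1/\left\|d^k\right\|^2=+\infty$ from $\left\|d^k\right\|^2\leq\theta_k^2(a+bk)$ requires $\theta_k$ bounded above. I would try to obtain this from the Zoutendijk condition (Proposition \ref{Zountendijk condition proposition}) together with sufficient descent. Sufficient descent gives $\left[\psi\circ\phi_{x^k}\left(d^k\right)\right]^2\geq c^2\theta_k^2$, so Zoutendijk yields $\sum\theta_k^2/\left\|d^k\right\|^2<\infty$. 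But $\sum\theta_k^2/\left\|d^k\right\|^2\geq\sum1/(a+bk)=+\infty$, a contradiction obtained directly, without needing a bound on $\theta_k$. This is the route I would take for (i), mirroring the proof of Theorem \ref{General convergence theorem}(i).

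For (ii), the same normalized approach can be combined with the strong Wolfe estimate $\left[\psi\circ\phi_{x^k}\left(v\left(x^k\right)\right)\right]^2\lesssim\left[\psi\circ\phi_{x^k}\left(d^k\right)\right]^2+\beta_k^2\left[\psi\circ\phi_{x^{k-1}}\left(d^{k-1}\right)\right]^2$ from that proof. Alternatively, the boundedness of $v$ from Assumption \ref{assumption 3} lets one invoke Theorem \ref{General convergence theorem}(ii) directly.
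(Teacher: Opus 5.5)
Your proposal is correct and follows essentially the paper's proof: the same recursion for $\|d^k\|^2/[\psi\circ\phi_{x^k}(v(x^k))]^2$, then for (i) a direct contradiction with the Zoutendijk condition via sufficient descent (not via Theorem \ref{General convergence theorem}(i), as you rightly conclude), and for (ii) boundedness of $\psi\circ\phi_{x^k}(v(x^k))$ from Assumption \ref{assumption 3} and Lemma \ref{descent direction finding lemma}(iv), giving $\|d^k\|^2=O(k)$ and then Theorem \ref{General convergence theorem}(ii). Your identity $\theta_k=\|v(x^k)\|^2$ holds by positive homogeneity and replaces the paper's Cauchy--Schwarz bound with a shorter step; in (ii), Theorem \ref{General convergence theorem} also needs Assumption \ref{assumption 2}, which follows from Assumption \ref{assumption 3} because $L_0$ is then compact (the paper leaves this implicit as well).
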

\medskip
\begin{proof}
	If possible, assume that there exists $\gamma>0$ such that 
		\[\left\|v\left(x^k\right)\right\|\geq \gamma\text{ for all } k\geq 0.\]
		By \eqref{conjugate direction in algorithm} and the item (ii) of Lemma \ref{inequality lemma} with $p=\left\|v\left(x^k\right)\right\|$, $q=\left|\beta_k\right|\left\|d^{k-1}\right\|$, and $\alpha=\tfrac{\delta}{\sqrt{2\left(1-\delta^2\right)}}$, we get 
		\[\left\|d^k\right\|^2\leq\left(\left\|v\left(x^k\right)\right\|+\left|\beta_k\right|\left\|d^{k-1}\right\|\right)^2\leq\tfrac{1}{1-\delta^2}\left\|v\left(x^k\right)\right\|^2+\tfrac{1}{\delta^2}\beta_k^2\left\|d^{k-1}\right\|^2.\]
		Since $\left|\beta_k\right|\leq \delta\beta_k^{FR}=\delta \tfrac{\psi\circ\phi_{x^k}\left(v\left(x^k\right)\right)}{\psi\circ\phi_{x^{k-1}}\left(v\left(x^{k-1}\right)\right)}$, we get 
		\[\tfrac{ \left\|d^k\right\|^2}{\left[\psi\circ\phi_{x^k}\left(v\left(x^k\right)\right)\right]^2}\leq\tfrac{1}{1-\delta^2} \tfrac{ \left\|v\left(x^k\right)\right\|^2}{\left[\psi\circ\phi_{x^k}\left(v\left(x^k\right)\right)\right]^2}+\tfrac{1}{\delta^2}\tfrac{ \beta_k^2\left\|d^{k-1}\right\|^2}{\left[\psi\circ\phi_{x^k}\left(v\left(x^k\right)\right)\right]^2}\leq\tfrac{1}{1-\delta^2} \tfrac{ \left\|v\left(x^k\right)\right\|^2}{\left[\psi\circ\phi_{x^k}\left(v\left(x^k\right)\right)\right]^2}+\tfrac{\left\|d^{k-1}\right\|^2}{\left[\psi\circ\phi_{x^{k-1}}\left(v\left(x^{k-1}\right)\right)\right]^2}.\]
		Due to the relation $0<\gamma^2\leq \left\|v\left(x^k\right)\right\|^2\leq-2\psi\circ\phi_{x^k}\left(v\left(x^k\right)\right)$, we get 
		\[\tfrac{ \left\|d^k\right\|^2}{\left[\psi\circ\phi_{x^k}\left(v\left(x^k\right)\right)\right]^2}\leq\tfrac{4}{\left(1-\delta^2\right)\gamma^2}+\tfrac{\left\|d^{k-1}\right\|^2}{\left[\psi\circ\phi_{x^{k-1}}\left(v\left(x^{k-1}\right)\right)\right]^2}.\]
		Applying repeatedly, we get 
		\begin{align}\label{proof of global convergence thm FR eq 1}
			\tfrac{ \left\|d^k\right\|^2}{\left[\psi\circ\phi_{x^k}\left(v\left(x^k\right)\right)\right]^2}\leq\tfrac{4}{\left(1-\delta^2\right)\gamma^2}k+\tfrac{\left\|d^{0}\right\|^2}{\left[\psi\circ\phi_{x^{0}}\left(v\left(x^{0}\right)\right)\right]^2} \leq \tfrac{4}{\left(1-\delta^2\right)\gamma^2}k+\tfrac{4}{\gamma^2}=\tfrac{4}{\gamma^2}\left(\tfrac{1}{1-\delta^2}k+1\right)
		\end{align}
		Therefore, we get 
	\begin{align}\label{proof of global convergence thm FR eq 2}
		\tfrac{\left[\psi\circ\phi_{x^k}\left(v\left(x^k\right)\right)\right]^2}{ \left\|d^k\right\|^2}\geq\tfrac{\gamma^2\left(1-\delta^2\right)}{4}\tfrac{1}{k+1-\delta^2}\geq\tfrac{\gamma^2\left(1-\delta^2\right)}{4}\tfrac{1}{k+1}.
	\end{align}
		Let us consider item (i). By the sufficient descent condition \eqref{sufficient descent condition}, we get 
		\[\underset{k\geq0}{\sum}\tfrac{\left[\psi\circ\phi_{x^k}\left(d^k\right)\right]^2}{\left\|d^k\right\|^2}\geq \underset{k\geq0}{\sum}\tfrac{c^2\left[\psi\circ\phi_{x^k}\left(v\left(x^k\right)\right)\right]^2}{\left\|d^k\right\|^2}\overset{\eqref{proof of global convergence thm FR eq 2}}{\geq}\tfrac{c^2\gamma^2\left(1-\delta^2\right)}{4}\underset{k\geq0}{\sum}\tfrac{1}{k+1}=+\infty.\]
		Therefore, we have a contradiction because under the hypothesis of item (i), the Zoutendijk condition \eqref{Zountendjk condition} holds. Hence, item (i) is proved.
		
		Let us now show item (ii). Since $d^k$ is a descent direction for all $G_i$ at $x^k$, we claim that under Assumption \ref{assumption 3}, the sequence $\left\{\psi\circ\phi_{x^k}\left(v\left(x^k\right)\right)\right\}$ is bounded. For all $x\in L_0$ and for all $i=1,2,\ldots,m$, we have
		\begin{align*}
			~&~ -\tfrac{1}{2}\left\lVert\underline{\nabla_{gH} G_i}\left(x^k\right)+\overline{\nabla_{gH} G_i}\left(x^k\right)\right\rVert\|v\left(x^k\right)\|\\
			\leq ~&~ \tfrac{1}{2}\left(\underline{\nabla_{gH} G_i}\left(x^k\right)+\overline{\nabla_{gH} G_i}\left(x^k\right)\right)^\top v\left(x^k\right)\quad\left[\text{Using Cauchy Schwarz inequality}\right]\\
			\leq ~&~ \tfrac{1}{2}\left(\underline{\nabla_{gH} G_i}\left(x^k\right)+\overline{\nabla_{gH} G_i}\left(x^k\right)\right)^\top v\left(x^k\right)+\tfrac{1}{2}\left({\overline{\nabla_{gH} G_i}\left(x^k\right)-\underline{\nabla_{gH} G_i}\left(x^k\right)}\right)^\top \left| v\left(x^k\right)\right|\\
			=~&~  \overline{g}_{x^k}^i\left( v\left(x^k\right)\right)
			\leq   \underset{i=1,2,\ldots,m}{\max}\overline{g}_{x^k}^i\left( v\left(x^k\right)\right)
			=  \psi\circ\phi_{x^k}\left( v\left(x^k\right)\right)<0.
		\end{align*}
		From item (iv) of Lemma \ref{descent direction finding lemma}, $v\left(x^k\right)$ is bounded in $L_0$. Since $\nabla_{gH}\underline{G_i}$ and $\nabla_{gH}\overline{G_i}$ are continuous on $L_0$ for all $i=1,2,\ldots,m$, there exists $M>0$ such that \[0>\psi\circ\phi_{x^k}\left( v\left(x^k\right)\right)\geq-M,\]
		which implies that \[\left[ \psi\circ\phi_{x^k}\left( v\left(x^k\right)\right)\right]^2\leq M^2.\]
		Therefore, by \eqref{proof of global convergence thm FR eq 1}, we get 
		\[\left\|d^k\right\|^2\leq ak+b,\text{ where } a:=\tfrac{4M^2}{\gamma^2\left(1-\delta^2\right)}>0\text{ and } b:=\tfrac{4M^2}{\gamma^2}>0.\]
		Then \[\underset{k\geq0}{\sum}\tfrac{1}{\left\|d^k\right\|^2}=+\infty,\]
		which implies by item (ii) of Theorem \ref{General convergence theorem} that $\underset{k\to\infty}{\liminf}\left\|v\left(x^k\right)\right\|=0.$
		
\end{proof}

\subsection{Conjugate Descent}
Next,  we study the convergence results of Algorithm \ref{Algorithm} with respect to the CD parameter, which is given by 
\[\beta_k^{CD}:=\tfrac{\psi\circ\phi_{x^k}\left(v\left(x^k\right)\right)}{\psi\circ\phi_{x^{k-1}}\left(d^{k-1}\right)}.\]
In the following lemma, we show that the sequence $\left\{d^k\right\}$ generated by Algorithm \ref{Algorithm} satisfies the sufficient descent condition \eqref{sufficient descent condition} under certain conditions.
\medskip
\begin{lemma}\label{CD parameter lemma}
Consider the Algorithm \ref{Algorithm} with $0\leq \beta_k\leq \beta_k^{CD}$ and suppose that $t_k$ satisfies the strong Wolfe conditions \eqref{Strong Wolfe condition in algorithm}. Then, $d^k$ satisfies the sufficient descent condition \eqref{sufficient descent condition} with $c:=1-\sigma$.
\end{lemma}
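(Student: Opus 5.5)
I argue by induction on $k$ that $\psi\circ\phi_{x^k}(d^k)\le(1-\sigma)\,\psi\circ\phi_{x^k}(v(x^k))$. In particular $\psi\circ\phi_{x^k}(d^k)<0$, so each $d^k$ is a descent direction and the strong Wolfe step $t_k$ exists by Proposition \ref{existence of an interval of steplength}.

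\emph{Base case.} For $k=0$ we have $d^0=v(x^0)$. Since $\psi\circ\phi_{x^0}(v(x^0))<0$ and $1-\sigma\in(0,1)$, the inequality holds trivially.

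\emph{Inductive step, first estimate.} Assume the claim for $k-1$, so $\psi\circ\phi_{x^{k-1}}(d^{k-1})<0$. Then $\beta_k^{CD}$ is a quotient of two negative numbers, hence positive, and the hypothesis $0\le\beta_k\le\beta_k^{CD}$ is consistent. The map $\psi\circ\phi_{x^k}$ is sublinear: each $\overline{g}^i_{x^k}$ is a linear term plus a nonnegative combination of $|v_j|$, and a maximum of sublinear functions is sublinear. With $\beta_k\ge0$ this gives
\[
\psi\circ\phi_{x^k}(d^k)\le \psi\circ\phi_{x^k}(v(x^k))+\beta_k\,\psi\circ\phi_{x^k}(d^{k-1}).
\]

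\emph{Inductive step, bounding the second term.} From the strong Wolfe condition \eqref{Strong Wolfe condition 2} at iteration $k-1$, with $x^k=x^{k-1}+t_{k-1}d^{k-1}$, we get
\[
\psi\circ\phi_{x^k}(d^{k-1})\le \bigl|\psi\circ\phi_{x^k}(d^{k-1})\bigr|\le -\sigma\,\psi\circ\phi_{x^{k-1}}(d^{k-1}).
\]
Here I read \eqref{Strong Wolfe condition 2} as the two-sided bound $|\psi\circ\phi_{x^{k-1}+t d^{k-1}}(d^{k-1})|\le\sigma|\psi\circ\phi_{x^{k-1}}(d^{k-1})|$. This is the version actually used in the proof of Theorem \ref{General convergence theorem}(ii), where it yields $\psi\circ\phi_{x^k}(d^{k-1})\le-\sigma\psi\circ\phi_{x^{k-1}}(d^{k-1})$. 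The displayed ``$\geq$'' in the definition appears to be a typo.

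\emph{Combining.} Multiplying by $\beta_k\ge0$ and using $\beta_k\le\beta_k^{CD}$ together with $-\psi\circ\phi_{x^{k-1}}(d^{k-1})>0$,
\[
\beta_k\,\psi\circ\phi_{x^k}(d^{k-1})\le -\sigma\beta_k\,\psi\circ\phi_{x^{k-1}}(d^{k-1})\le -\sigma\beta_k^{CD}\psi\circ\phi_{x^{k-1}}(d^{k-1})=-\sigma\,\psi\circ\phi_{x^k}(v(x^k)).
\]
Substituting into the first estimate gives $\psi\circ\phi_{x^k}(d^k)\le(1-\sigma)\psi\circ\phi_{x^k}(v(x^k))$, which closes the induction.

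The main obstacle is this last step. It needs both the direction of the strong Wolfe inequality and the sign of $\psi\circ\phi_{x^{k-1}}(d^{k-1})$, which the induction hypothesis supplies. Multiplying the upper bound $\beta_k\le\beta_k^{CD}$ by the positive quantity $-\sigma\psi\circ\phi_{x^{k-1}}(d^{k-1})$ keeps the inequality direction, and the two factors of $\psi\circ\phi_{x^{k-1}}(d^{k-1})$ cancel exactly, which is why the CD choice gives the clean constant $c=1-\sigma$.
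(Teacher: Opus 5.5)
Your proof is correct and follows the paper's argument essentially line for line. Both use induction from $d^0=v(x^0)$, sublinearity of $\psi\circ\phi_{x^k}$ with $\beta_k\ge0$, and the strong Wolfe bound $\psi\circ\phi_{x^k}(d^{k-1})\le-\sigma\,\psi\circ\phi_{x^{k-1}}(d^{k-1})$; then $\beta_k\le\beta_k^{CD}$ cancels the factor $\psi\circ\phi_{x^{k-1}}(d^{k-1})$ and yields $c=1-\sigma$. You are right to read \eqref{Strong Wolfe condition 2} as $\lvert\psi\circ\phi_{x+td}(d)\rvert\le\sigma\lvert\psi\circ\phi_x(d)\rvert$ and to flag the ``$\geq$'' as a typo, since the paper's own proof relies on the ``$\leq$'' version.
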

\medskip
\begin{proof}
	 We will prove this result by the mathematical induction method. For $k=0$, we have \[d^0=v\left(x^0\right) \text{ and }\psi\circ\phi_{x^0}\left(v\left(x^0\right)\right)<0.\]
	 Since $0<\sigma<1$, we have 
	 \[\psi\circ\phi_{x^0}\left(d^0\right)=\psi\circ\phi_{x^0}\left(v\left(x^0\right)\right)< \left(1-\sigma\right)\psi\circ\phi_{x^0}\left(v\left(x^0\right)\right)=c~\psi\circ\phi_{x^0}\left(v\left(x^0\right)\right).\]
	 For some $k\geq1$, we assume that 
	 \[\psi\circ\phi_{x^{k-1}}\left(d^{k-1}\right)\leq \left(1-\sigma\right)\psi\circ\phi_{x^{k-1}}\left(v\left(x^{k-1}\right)\right)<0.\]
	 Therefore, $\beta_k^{CD}>0$ and $\beta_k$ is well-defined. By the definition of the conjugate direction $d^k$ given in \eqref{conjugate direction in algorithm} and the positiveness of $\beta_k$, we get 
	 \[\psi\circ\phi_{x^k}\left(d^k\right)\leq\psi\circ\phi_{x^k}\left(v\left(x^k\right)\right)+\beta_k~\psi\circ\phi_{x^k}\left(d^{k-1}\right).\]
	 By the strong Wolfe condition \eqref{Strong Wolfe condition 2}, we get 
	 \begin{align*}
	 	\psi\circ\phi_{x^k}\left(d^k\right)
	 	&\leq \psi\circ\phi_{x^k}\left(v\left(x^k\right)\right)-\sigma\beta_k~\psi\circ\phi_{x^{k-1}}\left(d^{k-1}\right)\\
	 	&\leq \psi\circ\phi_{x^k}\left(v\left(x^k\right)\right)-\sigma\beta_k^{CD}~\psi\circ\phi_{x^{k-1}}\left(d^{k-1}\right)\\
	 	&= \psi\circ\phi_{x^k}\left(v\left(x^k\right)\right)-\sigma\tfrac{\psi\circ\phi_{x^k}\left(v\left(x^k\right)\right)}{\psi\circ\phi_{x^{k-1}}\left(d^{k-1}\right)}~\psi\circ\phi_{x^{k-1}}\left(d^{k-1}\right)\\ 
	 	&=\left(1-\sigma\right)\psi\circ\phi_{x^k}\left(v\left(x^k\right)\right).
	 \end{align*}
	 Hence, the proof is completed.
\end{proof}
	\medskip
\begin{remark}
	\normalfont
	In Lemma \ref{CD parameter lemma}, we show that the sequence $\left\{d^k\right\}$ generated by Algorithm \ref{Algorithm} satisfies the sufficient descent condition \eqref{sufficient descent condition} if $0\leq \beta_k\leq \beta_k^{CD}$ and the step length $t_k$ satisfies the strong Wolfe conditions \eqref{Strong Wolfe condition in algorithm}. 
\end{remark}
\medskip
In the following, we show the global convergence result of Algorithm \ref{Algorithm} under the consideration that $\beta_k\geq0$, bounded above by an appropriate fraction of the CD parameter $\beta_k^{CD}$, and the step length $t_k$ satisfies the strong Wolfe conditions \eqref{Strong Wolfe condition in algorithm}.
\medskip
\begin{theorem}\label{global convergence thm for CD parameter}
	Let Assumption \ref{assumption 1} and Assumption \ref{assumption 2} hold. Consider the Algorithm \ref{Algorithm} with \[\beta_k:=\eta\beta_k^{CD}, \text{ where }0\leq \eta< 1-\sigma.\] 
	If $t_k$ satisfies the strong Wolfe conditions \eqref{Strong Wolfe condition in algorithm}, then $\underset{k\to\infty}{\liminf}\left\|v\left(x^k\right)\right\|=0.$ 
\end{theorem}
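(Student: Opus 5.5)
The plan is to reduce the statement to item (i) of Theorem \ref{global convergence thm for FR}, taking $\delta:=\eta/(1-\sigma)$. This $\delta$ lies in $[0,1)$ exactly because $0\leq\eta<1-\sigma$. Three hypotheses of that item must be checked: the sufficient descent condition \eqref{sufficient descent condition}, the standard Wolfe conditions \eqref{Standard Wolfe condition in algorithm}, and the bound $\left|\beta_k\right|\leq\delta\beta_k^{FR}$.

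\emph{Step 1 (sufficient descent).} Since $0\leq\eta<1$, the choice $\beta_k=\eta\beta_k^{CD}$ satisfies $0\leq\beta_k\leq\beta_k^{CD}$. This holds as long as $\beta_k^{CD}\geq 0$, and that is guaranteed inductively inside the proof of Lemma \ref{CD parameter lemma}, where $\psi\circ\phi_{x^{k-1}}(d^{k-1})<0$ and $\psi\circ\phi_{x^k}(v(x^k))<0$ make $\beta_k^{CD}>0$. Lemma \ref{CD parameter lemma} then gives \eqref{sufficient descent condition} with $c=1-\sigma$ for all $k$:
\[
\psi\circ\phi_{x^k}\left(d^k\right)\leq (1-\sigma)\,\psi\circ\phi_{x^k}\left(v\left(x^k\right)\right)<0 .
\]
In particular each $d^k$ is a descent direction, so every step is well defined by Proposition \ref{existence of an interval of steplength}.

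\emph{Step 2 (strong Wolfe gives standard Wolfe).} The first conditions in \eqref{Standard Wolfe condition in algorithm} and \eqref{Strong Wolfe condition in algorithm} coincide. For the curvature condition, I will use the strong Wolfe condition in the same form in which it is used in the proofs of Theorem \ref{General convergence theorem} and Lemma \ref{CD parameter lemma}, namely
\[
\sigma\,\psi\circ\phi_{x^k}(d^k)\leq\psi\circ\phi_{x^{k+1}}(d^k)\leq -\sigma\,\psi\circ\phi_{x^k}(d^k).
\]
Its left half is exactly \eqref{Standard Wolfe condition 2}. Therefore, together with Assumptions \ref{assumption 1} and \ref{assumption 2}, Proposition \ref{Zountendijk condition proposition} applies and the Zoutendijk condition \eqref{Zountendjk condition} holds. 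This step is the one to handle carefully, since it relies on reading \eqref{Strong Wolfe condition 2} consistently with its earlier uses.

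\emph{Step 3 (comparison with Fletcher--Reeves).} Step 1 applied at index $k-1$ gives
\[
\left|\psi\circ\phi_{x^{k-1}}(d^{k-1})\right|\geq(1-\sigma)\left|\psi\circ\phi_{x^{k-1}}(v(x^{k-1}))\right|.
\]
Hence, since both numerators and denominators are negative,
\[
0\leq\beta_k=\eta\,\frac{\psi\circ\phi_{x^k}(v(x^k))}{\psi\circ\phi_{x^{k-1}}(d^{k-1})}\leq\frac{\eta}{1-\sigma}\,\frac{\psi\circ\phi_{x^k}(v(x^k))}{\psi\circ\phi_{x^{k-1}}(v(x^{k-1}))}=\delta\,\beta_k^{FR}.
\]
All hypotheses of item (i) of Theorem \ref{global convergence thm for FR} are now met, and it yields $\liminf_{k\to\infty}\|v(x^k)\|=0$.

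If the direct appeal to that theorem is undesirable, its argument can be repeated instead. The recursive bound on $\|d^k\|^2/[\psi\circ\phi_{x^k}(v(x^k))]^2$ grows at most linearly in $k$, so the sum in \eqref{Zountendjk condition} diverges like $\sum 1/(k+1)$. This contradicts Zoutendijk under the assumption $\|v(x^k)\|\geq\gamma>0$.
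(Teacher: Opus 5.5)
Your proposal is correct and follows the paper's proof. Both use Lemma \ref{CD parameter lemma} to get sufficient descent with $c=1-\sigma$, deduce $0\leq\beta_k\leq\frac{\eta}{1-\sigma}\beta_k^{FR}$, and invoke Theorem \ref{global convergence thm for FR} with $\delta=\eta/(1-\sigma)<1$. Your Step 2 makes explicit two points the paper leaves implicit: item (i) is the applicable one, since Assumption \ref{assumption 3} is not available for item (ii), and the strong Wolfe curvature condition, read as $\lvert\psi\circ\phi_{x^{k+1}}(d^k)\rvert\leq-\sigma\,\psi\circ\phi_{x^k}(d^k)$ as in the paper's own proofs, implies the standard one needed for the Zoutendijk condition.
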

\medskip
\begin{proof}
	From Lemma \ref{CD parameter lemma}, note that $d^k$ satisfies the sufficient descent conditions \eqref{sufficient descent condition} with $c:=1-\sigma$ for all $k\geq0$. So, we have 
	\begin{align*}
	\psi\circ\phi_{x^{k-1}}\left(d^{k-1}\right)\leq \left(1-\sigma\right)\psi\circ\phi_{x^{k-1}}\left(v\left(x^{k-1}\right)\right)<0. 
	\end{align*}
	Since $\psi\circ\phi_{x^{k}}\left(v\left(x^{k}\right)\right)<0$, we obtain that 
	\begin{align}\label{proof of global convergence thm CD eq 1}
		 \tfrac{\psi\circ\phi_{x^{k}}\left(v\left(x^{k}\right)\right)}{\psi\circ\phi_{x^{k-1}}\left(d^{k-1}\right)}\leq \tfrac{\psi\circ\phi_{x^{k}}\left(v\left(x^{k}\right)\right)}{\left(1-\sigma\right)\psi\circ\phi_{x^{k-1}}\left(v\left(x^{k-1}\right)\right)}=\tfrac{\beta_k^{FR}}{1-\sigma}.
	\end{align}
	Therefore, by \eqref{proof of global convergence thm CD eq 1}, we get that \[0\leq \beta_k=\eta\beta_k^{CD}=\tfrac{\eta}{1-\sigma}\left(1-\sigma\right)\tfrac{\psi\circ\phi_{x^{k}}\left(v\left(x^{k}\right)\right)}{\psi\circ\phi_{x^{k-1}}\left(d^{k-1}\right)}\leq\tfrac{\eta}{1-\sigma}\beta_k^{FR}.\]
	Since $0\leq \tfrac{\eta}{1-\sigma}<1$, from Theorem \ref{global convergence thm for FR}, we conclude that $\underset{k\to\infty}{\liminf}\left\|v\left(x^k\right)\right\|=0,$ and this completes the proof.
\end{proof}

\subsection{Dai-Yuan}
We derive the convergence result of Algorithm \ref{Algorithm} with respect to the DY parameter, which is given by
\[\beta_k^{DY}:=\tfrac{-\psi\circ\phi_{x^k}\left(v\left(x^k\right)\right)}{\psi\circ\phi_{x^{k}}\left(d^{k-1}\right)-\psi\circ\phi_{x^{k-1}}\left(d^{k-1}\right)}.\]
In the following lemma, we prove that the sequence $\left\{d^k\right\}$ generated by Algorithm \ref{Algorithm} satisfies the sufficient descent condition \eqref{sufficient descent condition} under some reasonable conditions.
\medskip
\begin{lemma}\label{DY lemma}
Consider the Algorithm \ref{Algorithm} with $0\leq\beta_k\leq \beta_k^{DY}$ and suppose that the step length $t_k$ satisfies the strong Wolfe conditions \eqref{Strong Wolfe condition in algorithm}. Then, $d^k$ satisfies the sufficient descent condition \eqref{sufficient descent condition} with $c:=\tfrac{1}{1+\sigma}$	 
\end{lemma}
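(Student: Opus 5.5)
The plan is an induction on $k$ modeled on Lemma \ref{CD parameter lemma}. Write $a_k:=\psi\circ\phi_{x^k}\left(v\left(x^k\right)\right)<0$, $e_{k-1}:=\psi\circ\phi_{x^{k-1}}\left(d^{k-1}\right)$ and $b_k:=\psi\circ\phi_{x^k}\left(d^{k-1}\right)$.

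For $k=0$ we have $d^0=v\left(x^0\right)$. Since $a_0<0$ and $\tfrac{1}{1+\sigma}<1$, we get $\psi\circ\phi_{x^0}\left(d^0\right)=a_0<\tfrac{1}{1+\sigma}a_0$.

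For the inductive step, assume $e_{k-1}\leq\tfrac{1}{1+\sigma}a_{k-1}<0$, so $d^{k-1}$ is a descent direction. I will use the curvature part of the strong Wolfe conditions \eqref{Strong Wolfe condition in algorithm} in the same form as in the proofs of Theorem \ref{General convergence theorem}(ii) and Lemma \ref{CD parameter lemma}, namely $\sigma e_{k-1}\leq b_k\leq-\sigma e_{k-1}$. The first consequence is that the denominator of $\beta_k^{DY}$ satisfies $b_k-e_{k-1}\geq(\sigma-1)e_{k-1}>0$. Hence $\beta_k^{DY}>0$ is well defined, and the hypothesis $0\leq\beta_k\leq\beta_k^{DY}$ makes sense.

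Next, since $\beta_k\geq0$, the subadditivity and positive homogeneity of $\psi\circ\phi_{x^k}$ give
\[\psi\circ\phi_{x^k}\left(d^k\right)\leq a_k+\beta_k b_k.\]
I then split on the sign of $b_k$.

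\emph{Case $b_k\leq0$.} Here $\beta_k b_k\leq0$, so $\psi\circ\phi_{x^k}\left(d^k\right)\leq a_k\leq\tfrac{1}{1+\sigma}a_k$, using $a_k<0$.

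\emph{Case $b_k>0$.} Use $\beta_k\leq\beta_k^{DY}=-a_k/(b_k-e_{k-1})$ to obtain
\[\psi\circ\phi_{x^k}\left(d^k\right)\leq a_k-\tfrac{a_kb_k}{b_k-e_{k-1}}=a_k\,\tfrac{-e_{k-1}}{b_k-e_{k-1}}.\]
From $b_k\leq-\sigma e_{k-1}$ we get $0<b_k-e_{k-1}\leq-(1+\sigma)e_{k-1}$. Therefore $\tfrac{-e_{k-1}}{b_k-e_{k-1}}\geq\tfrac{1}{1+\sigma}$, and multiplying by $a_k<0$ reverses the inequality, giving $\psi\circ\phi_{x^k}\left(d^k\right)\leq\tfrac{1}{1+\sigma}a_k$.

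This closes the induction. In particular each $d^k$ is a descent direction, which is exactly what Proposition \ref{existence of an interval of steplength} needs to keep the strong Wolfe line search available at the next iteration.

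The main obstacle is the direction of the strong Wolfe inequality. As printed in \eqref{Strong Wolfe condition 2}, it bounds $\left|\psi\circ\phi_{x+td}(d)\right|$ from below. The argument above, like the earlier proofs in the paper, needs the upper bound $\left|b_k\right|\leq\sigma\left|e_{k-1}\right|$. I will state explicitly that the curvature condition is used in this form, consistent with its use in Theorem \ref{General convergence theorem}(ii) and Lemma \ref{CD parameter lemma}. Once that is fixed, the positivity of the DY denominator and the case $b_k>0$ are short algebra.
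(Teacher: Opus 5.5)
Your proof is correct and follows the paper's argument essentially step for step. It uses the same induction, the same positivity of the DY denominator from $b_k\geq\sigma e_{k-1}>e_{k-1}$, and the same split on the sign of $b_k$; your factor $\tfrac{-e_{k-1}}{b_k-e_{k-1}}$ is exactly the paper's $\tfrac{1}{1-l_k}$ with $l_k:=b_k/e_{k-1}$. Your remark on the curvature condition is also accurate: the paper's proof uses $l_k\in[-\sigma,\sigma]$, i.e.\ $\left|b_k\right|\leq\sigma\left|e_{k-1}\right|$, so the printed ``$\geq$'' in the strong Wolfe condition is a typo for ``$\leq$''.
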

\medskip
\begin{proof}
	 We will prove this result by the mathematical induction method. For $k=0$, we get 
	 \[d^0=v\left(x^0\right) \text{ and }\psi\circ\phi_{x^0}\left(v\left(x^0\right)\right)<0.\]
	 Since $0<\sigma<1$, we have 
	 \[\psi\circ\phi_{x^0}\left(d^0\right)=\psi\circ\phi_{x^0}\left(v\left(x^0\right)\right)< \tfrac{1}{\left(1+\sigma\right)}\psi\circ\phi_{x^0}\left(v\left(x^0\right)\right).\]
	 For some $k\geq1$, we assume that 
	  \[\psi\circ\phi_{x^{k-1}}\left(d^{k-1}\right)\leq \tfrac{1}{\left(1+\sigma\right)}\psi\circ\phi_{x^{k-1}}\left(v\left(x^{k-1}\right)\right)<0.\]
	  Since $\sigma<1$ and $\psi\circ\phi_{x^{k-1}}\left(d^{k-1}\right)<0$, by the strong Wolfe conditions \eqref{Strong Wolfe condition in algorithm}, we obtain that 
	  \[\psi\circ\phi_{x^{k}}\left(d^{k-1}\right)\geq \sigma~\psi\circ\phi_{x^{k-1}}\left(d^{k-1}\right)>\psi\circ\phi_{x^{k-1}}\left(d^{k-1}\right).\]
	  Therefore, $\beta_k^{DY}>0$ and $\beta_k^{DY}$ is well-defined. By the definition of the conjugate direction $d^k$ given in \eqref{conjugate direction in algorithm} and the positiveness of $\beta_k$, we get 
	  \begin{align}\label{proof of DY lemma eq 1}
	  	\psi\circ\phi_{x^k}\left(d^k\right)\leq\psi\circ\phi_{x^k}\left(v\left(x^k\right)\right)+\beta_k~\psi\circ\phi_{x^k}\left(d^{k-1}\right).
	  \end{align}
	  If $\psi\circ\phi_{x^k}\left(d^{k-1}\right)\leq0$, then we obtain that  \[\psi\circ\phi_{x^k}\left(d^k\right)\leq\psi\circ\phi_{x^k}\left(v\left(x^k\right)\right)<\tfrac{1}{1+\sigma}\psi\circ\phi_{x^k}\left(v\left(x^k\right)\right).\]
	  Therefore, if $\psi\circ\phi_{x^k}\left(d^{k-1}\right)\leq0$, then the result is true. Now, we suppose that $\psi\circ\phi_{x^k}\left(d^{k-1}\right)>0$. Since $0\leq\beta_k\leq \beta_k^{DY}$, from \eqref{proof of DY lemma eq 1}, we obtain that 
	   \begin{align}\label{proof of DY lemma eq 2}
	   	\psi\circ\phi_{x^k}\left(d^k\right)\leq \psi\circ\phi_{x^k}\left(v\left(x^k\right)\right)+\beta_k^{DY}~\psi\circ\phi_{x^k}\left(d^{k-1}\right).
	   \end{align}
	   Define $l_k:=\tfrac{\psi\circ\phi_{x^k}\left(d^{k-1}\right)}{\psi\circ\phi_{x^{k-1}}\left(d^{k-1}\right)}.$ By the strong Wolfe conditions \eqref{Strong Wolfe condition in algorithm}, we have $l_k\in\left[-\sigma,\sigma\right].$ From \eqref{proof of DY lemma eq 2}, we get 
	   \begin{align*}
	   	 \psi\circ\phi_{x^k}\left(d^k\right)&\leq \psi\circ\phi_{x^k}\left(v\left(x^k\right)\right)-\tfrac{\psi\circ\phi_{x^k}\left(v\left(x^k\right)\right)~\psi\circ\phi_{x^k}\left(d^{k-1}\right)}{\psi\circ\phi_{x^{k}}\left(d^{k-1}\right)-\psi\circ\phi_{x^{k-1}}\left(d^{k-1}\right)}\\
	   	 &\leq \psi\circ\phi_{x^k}\left(v\left(x^k\right)\right)\left(1-\tfrac{l_k}{l_k-1}\right)\\
	   	 &=\tfrac{1}{1-l_k}\psi\circ\phi_{x^k}\left(v\left(x^k\right)\right)\\
	   	 &\leq \tfrac{1}{1+\sigma}\psi\circ\phi_{x^k}\left(v\left(x^k\right)\right).
	   \end{align*}
	   Hence, the proof is completed.
\end{proof}
\medskip
In the following, we show the global convergence result of Algorithm \ref{Algorithm} with respect to the DY parameter.

\medskip
\begin{theorem}\label{global convergence thm for DY parameter}
	Let Assumption \ref{assumption 1} and Assumption \ref{assumption 2} hold. Consider the Algorithm \ref{Algorithm} with \[\beta_k:=\eta \beta_k^{DY}, \text{ where }0\leq \eta<\tfrac{1-\sigma}{1+\sigma}.\] 
	If the step length $t_k$ satisfies the strong Wolfe conditions \eqref{Strong Wolfe condition in algorithm}, then $\underset{k\to\infty}{\liminf}\left\|v\left(x^k\right)\right\|=0.$
\end{theorem}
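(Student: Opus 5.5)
Mirroring the proof of Theorem \ref{global convergence thm for CD parameter}, I plan to reduce the statement to item (i) of Theorem \ref{global convergence thm for FR}. That means showing two things: $d^k$ satisfies the sufficient descent condition \eqref{sufficient descent condition}, and $|\beta_k|\le \delta\beta_k^{FR}$ for some $\delta<1$. The strong Wolfe conditions imply the standard ones once $d^{k-1}$ is a descent direction. Indeed, \eqref{Strong Wolfe condition 2} together with the sign of $\psi\circ\phi_{x^{k-1}}(d^{k-1})<0$ gives $\psi\circ\phi_{x^k}(d^{k-1})\ge\sigma\,\psi\circ\phi_{x^{k-1}}(d^{k-1})$, which is the relation used in the proof of Lemma \ref{DY lemma}. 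Hence the Zoutendijk condition of Proposition \ref{Zountendijk condition proposition} is available.

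The first step is sufficient descent. Since $0\le\eta<1$, we have $0\le \beta_k=\eta\beta_k^{DY}\le\beta_k^{DY}$. Lemma \ref{DY lemma} then gives, by its induction, $\psi\circ\phi_{x^k}(d^k)\le \frac{1}{1+\sigma}\psi\circ\phi_{x^k}(v(x^k))<0$ for all $k$. In particular each $d^k$ is a descent direction, $\beta_k^{DY}>0$ is well defined, and \eqref{sufficient descent condition} holds with $c=\frac1{1+\sigma}$.

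The second step is comparison with FR, which is the main computation. The denominator of $\beta_k^{DY}$ is $\psi\circ\phi_{x^k}(d^{k-1})-\psi\circ\phi_{x^{k-1}}(d^{k-1})$. The strong Wolfe condition, written with $l_k\in[-\sigma,\sigma]$ as in Lemma \ref{DY lemma}, bounds it below by $(1-\sigma)\bigl(-\psi\circ\phi_{x^{k-1}}(d^{k-1})\bigr)>0$. Combining this with the sufficient descent at step $k-1$, namely $-\psi\circ\phi_{x^{k-1}}(d^{k-1})\ge \frac1{1+\sigma}\bigl(-\psi\circ\phi_{x^{k-1}}(v(x^{k-1}))\bigr)$, gives
\[0<\beta_k^{DY}\le \tfrac{1+\sigma}{1-\sigma}\,\tfrac{\psi\circ\phi_{x^k}(v(x^k))}{\psi\circ\phi_{x^{k-1}}(v(x^{k-1}))}=\tfrac{1+\sigma}{1-\sigma}\beta_k^{FR}.\]
Therefore $0\le\beta_k\le \delta\beta_k^{FR}$ with $\delta:=\eta\frac{1+\sigma}{1-\sigma}\in[0,1)$, which is exactly where the hypothesis $\eta<\frac{1-\sigma}{1+\sigma}$ enters.

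Finally, with Assumptions \ref{assumption 1} and \ref{assumption 2}, sufficient descent, $|\beta_k|\le\delta\beta_k^{FR}$, and the standard Wolfe conditions all in hand, item (i) of Theorem \ref{global convergence thm for FR} yields $\liminf_{k\to\infty}\|v(x^k)\|=0$. The only delicate point is the sign bookkeeping in the denominator bound: I must use the inequality $\psi\circ\phi_{x^k}(d^{k-1})\ge-\sigma|\psi\circ\phi_{x^{k-1}}(d^{k-1})|$ coming from the strong Wolfe condition, and verify that it is indeed furnished by \eqref{Strong Wolfe condition in algorithm}, as is implicitly assumed in Lemma \ref{DY lemma}.
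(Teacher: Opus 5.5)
Your proposal is correct and follows the paper's proof step for step. You get sufficient descent with $c=\tfrac{1}{1+\sigma}$ from Lemma~\ref{DY lemma}, bound the DY denominator by $\psi\circ\phi_{x^k}(d^{k-1})-\psi\circ\phi_{x^{k-1}}(d^{k-1})\ge\tfrac{1-\sigma}{1+\sigma}\bigl(-\psi\circ\phi_{x^{k-1}}(v(x^{k-1}))\bigr)$ to obtain $\beta_k\le\delta\beta_k^{FR}$ with $\delta=\eta\tfrac{1+\sigma}{1-\sigma}<1$, and conclude via Theorem~\ref{global convergence thm for FR}. You are right that only item (i) of that theorem is available, since Assumption~\ref{assumption 3} is not assumed.

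The check you flag goes through only if \eqref{Strong Wolfe condition 2} is read as $\lvert\psi\circ\phi_{x^k}(d^{k-1})\rvert\le\sigma\lvert\psi\circ\phi_{x^{k-1}}(d^{k-1})\rvert$. The displayed $\geq$ is evidently a typo: it holds trivially near $t=0$ and fails at the point $t^\star$ constructed in Proposition~\ref{existence of an interval of steplength}. Lemma~\ref{DY lemma} and the paper's proof of this theorem use the $\leq$ version exactly as you do.
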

\medskip
\begin{proof}
	 From Lemma \ref{DY lemma}, it is noted that $d^k$ satisfies the sufficient descent condition \eqref{sufficient descent condition} with $c:=\tfrac{1}{1+\sigma}$ for all $k\geq0$. So, we have 
	 \begin{align*}
	 \psi\circ\phi_{x^{k-1}}\left(d^{k-1}\right)\leq \tfrac{1}{\left(1+\sigma\right)}\psi\circ\phi_{x^{k-1}}\left(v\left(x^{k-1}\right)\right)<0.	 
	 \end{align*}
	 Since $\sigma<1$, we obtain that 
	 \begin{align}\label{proof of DY theorem eq 1}
	 	\left(\sigma-1\right)\psi\circ\phi_{x^{k-1}}\left(d^{k-1}\right)\geq \tfrac{\left(\sigma-1\right)}{\left(1+\sigma\right)}~\psi\circ\phi_{x^{k-1}}\left(v\left(x^{k-1}\right)\right)>0.	 
	 \end{align}
	 By the strong Wolfe conditions \eqref{Strong Wolfe condition in algorithm}, we get 
	 \begin{align*}
	 	\psi\circ\phi_{x^{k}}\left(d^{k-1}\right)-\psi\circ\phi_{x^{k-1}}\left(d^{k-1}\right)&\geq\left(\sigma-1\right)\psi\circ\phi_{x^{k-1}}\left(d^{k-1}\right)\\
	 	&\overset{\eqref{proof of DY theorem eq 1}}{\geq}\tfrac{\left(\sigma-1\right)}{\left(1+\sigma\right)}~\psi\circ\phi_{x^{k-1}}\left(v\left(x^{k-1}\right)\right)>0.\end{align*}
	 	Therefore, we get 
	 	\begin{align}\label{proof of DY thm eq 2}
	 		\tfrac{-\psi\circ\phi_{x^{k-1}}\left(v\left(x^{k-1}\right)\right)}{\psi\circ\phi_{x^{k}}\left(d^{k-1}\right)-\psi\circ\phi_{x^{k-1}}\left(d^{k-1}\right)}\leq \tfrac{1+\sigma}{1-\sigma}.
	 	\end{align}
	 	Define $\delta:=\eta\tfrac{1+\sigma}{1-\sigma}.$ Then, we obtain that 
	 	\begin{align*}
	 		 \beta_k&=\eta\beta_k^{DY}=\delta\tfrac{1-\sigma}{1+\sigma}\left[\tfrac{-\psi\circ\phi_{x^k}\left(v\left(x^k\right)\right)}{\psi\circ\phi_{x^{k}}\left(d^{k-1}\right)-\psi\circ\phi_{x^{k-1}}\left(d^{k-1}\right)}\right]\\
	 		 &=\delta\tfrac{1-\sigma}{1+\sigma}\left[\tfrac{-\psi\circ\phi_{x^k}\left(v\left(x^k\right)\right)}{-\psi\circ\phi_{x^{k-1}}\left(v\left(x^{k-1}\right)\right)}\right] \left[\tfrac{-\psi\circ\phi_{x^{k-1}}\left(v\left(x^{k-1}\right)\right)}{\psi\circ\phi_{x^{k}}\left(d^{k-1}\right)-\psi\circ\phi_{x^{k-1}}\left(d^{k-1}\right)}\right]\\
	 		 &\overset{\eqref{proof of DY thm eq 2}}{\leq}\delta \tfrac{\psi\circ\phi_{x^k}\left(v\left(x^k\right)\right)}{\psi\circ\phi_{x^{k-1}}\left(v\left(x^{k-1}\right)\right)}=\delta \beta_k^{FR}.
	 	\end{align*}
	 	Since $0\leq\delta<1$, from Theorem \ref{global convergence thm for FR}, we conclude that $\underset{k\to\infty}{\liminf}\left\|v\left(x^k\right)\right\|=0,$ and this completes the proof.
\end{proof}
\medskip
Theorem \ref{global convergence thm for DY parameter} states the global convergence of the Algorithm \ref{Algorithm} for an appropriate fraction of DY parameter. Next, we show that the global convergence of Algorithm \ref{Algorithm} with the slight modification of DY parameter. The modified DY parameter is given by 
\[\beta_k^{mDY}:=\tfrac{-\psi\circ\phi_{x^k}\left(v\left(x^k\right)\right)}{\psi\circ\phi_{x^{k}}\left(d^{k-1}\right)-\zeta\psi\circ\phi_{x^{k-1}}\left(d^{k-1}\right)},\text{ where }\zeta>1.\]
\medskip
\begin{lemma}\label{mDY parameter lemma}
	Consider the Algorithm \ref{Algorithm} with $0\leq \beta_k\leq \beta_k^{mDY}$ and suppose that $t_k$ satisfies the strong Wolfe conditions \eqref{Strong Wolfe condition in algorithm}. Then, $d^k$ satisfies the sufficient descent condition \eqref{sufficient descent condition} with $c:=\tfrac{\zeta}{\zeta+\sigma}$.
\end{lemma}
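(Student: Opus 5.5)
Let us abbreviate $a_k:=\psi\circ\phi_{x^k}\left(v\left(x^k\right)\right)<0$, $b_k:=\psi\circ\phi_{x^k}\left(d^{k-1}\right)$ and $e_{k-1}:=\psi\circ\phi_{x^{k-1}}\left(d^{k-1}\right)$. We argue by induction on $k$, exactly as in the proofs of Lemma \ref{CD parameter lemma} and Lemma \ref{DY lemma}.

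For $k=0$ we have $d^0=v\left(x^0\right)$. Since $c=\tfrac{\zeta}{\zeta+\sigma}\in(0,1)$ and $a_0<0$, we get $\psi\circ\phi_{x^0}\left(d^0\right)=a_0<c\,a_0$.

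For the induction step, assume $e_{k-1}\leq c\,a_{k-1}<0$. The first task is to check that $\beta_k^{mDY}$ is well defined and positive. The strong Wolfe condition \eqref{Strong Wolfe condition 2} gives $|b_k|\leq\sigma|e_{k-1}|$, so $b_k\geq\sigma e_{k-1}$. Combined with $\zeta>1>\sigma$ and $e_{k-1}<0$, this yields
\[
b_k-\zeta e_{k-1}\geq(\sigma-\zeta)e_{k-1}>0 .
\]
Hence $\beta_k^{mDY}>0$. Next, since $\beta_k\geq 0$ and $\psi\circ\phi_{x^k}$ is sublinear and positively homogeneous (a max of sublinear functions), we have
\[
\psi\circ\phi_{x^k}\left(d^k\right)\leq a_k+\beta_k b_k .
\]

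We now split into two cases according to the sign of $b_k$. If $b_k\leq 0$, then $\psi\circ\phi_{x^k}\left(d^k\right)\leq a_k\leq c\,a_k$, because $c<1$ and $a_k<0$. If $b_k>0$, we use $\beta_k\leq\beta_k^{mDY}$ to get
\[
\psi\circ\phi_{x^k}\left(d^k\right)\leq a_k-\frac{a_k b_k}{b_k-\zeta e_{k-1}} .
\]
Set $l_k:=b_k/e_{k-1}$. In this case $l_k\in[-\sigma,0)$, since $b_k>0$, $e_{k-1}<0$ and $|b_k|\leq\sigma|e_{k-1}|$. Dividing numerator and denominator by $e_{k-1}$ gives
\[
\psi\circ\phi_{x^k}\left(d^k\right)\leq a_k\left(1-\frac{l_k}{l_k-\zeta}\right)=\frac{\zeta}{\zeta-l_k}\,a_k .
\]
Because $-l_k\leq\sigma$, we have $\tfrac{\zeta}{\zeta-l_k}\geq\tfrac{\zeta}{\zeta+\sigma}$. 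Multiplying by $a_k<0$ reverses this inequality, so $\psi\circ\phi_{x^k}\left(d^k\right)\leq\tfrac{\zeta}{\zeta+\sigma}\,a_k=c\,a_k$, which completes the induction.

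The main obstacle is the sign bookkeeping when dividing by the negative quantity $e_{k-1}$ and when multiplying by $a_k<0$. This step relies on the positivity of the denominator of $\beta_k^{mDY}$, which has to be established first.
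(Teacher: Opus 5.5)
Your proof is correct and takes essentially the same route as the paper, whose proof simply refers to that of Lemma \ref{DY lemma}. Both argue by induction on $k$, establish positivity of the denominator $b_k-\zeta e_{k-1}$ from $b_k\geq\sigma e_{k-1}$, split on the sign of $b_k$, and substitute $l_k=b_k/e_{k-1}$ to obtain $\psi\circ\phi_{x^k}(d^k)\leq\tfrac{\zeta}{\zeta-l_k}a_k\leq\tfrac{\zeta}{\zeta+\sigma}a_k$. You rightly read the curvature condition as $|b_k|\leq\sigma|e_{k-1}|$; this is how the paper's own proofs use \eqref{Strong Wolfe condition 2}, even though the displayed inequality is written with $\geq$.
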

\medskip
\begin{proof}
	 The proof is similar to the proof of Lemma \ref{DY lemma}.
\end{proof}
\medskip
\begin{theorem}\label{mDY thm}
Let Assumption \ref{assumption 1} and Assumption \ref{assumption 2} hold. Consider the Algorithm \ref{Algorithm} with 
\[\beta_k:=\beta_k^{mDY}.\]	 
If the step length $t_k$ satisfies the strong Wolfe conditions \eqref{Strong Wolfe condition in algorithm}, then $\underset{k\to\infty}{\liminf}\left\|v\left(x^k\right)\right\|=0.$
\end{theorem}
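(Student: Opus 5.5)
The plan is to reduce Theorem \ref{mDY thm} to the Fletcher--Reeves result, Theorem \ref{global convergence thm for FR}(i), in the same way as Theorem \ref{global convergence thm for DY parameter}. The reduction has three steps.

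\textbf{Step 1: sufficient descent and positivity of $\beta_k^{mDY}$.} By Lemma \ref{mDY parameter lemma} with $\beta_k=\beta_k^{mDY}$, every $d^k$ satisfies the sufficient descent condition \eqref{sufficient descent condition} with $c=\tfrac{\zeta}{\zeta+\sigma}$. In particular $\psi\circ\phi_{x^{k-1}}(d^{k-1})<0$. The strong Wolfe condition \eqref{Strong Wolfe condition 2} gives $|\psi\circ\phi_{x^k}(d^{k-1})|\le \sigma|\psi\circ\phi_{x^{k-1}}(d^{k-1})|$, which is the inequality the paper actually uses through $l_k\in[-\sigma,\sigma]$. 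Hence
\[
\psi\circ\phi_{x^{k}}(d^{k-1})-\zeta\,\psi\circ\phi_{x^{k-1}}(d^{k-1})\ge (\zeta-\sigma)\bigl(-\psi\circ\phi_{x^{k-1}}(d^{k-1})\bigr)>0,
\]
since $\zeta>1>\sigma$. So $\beta_k^{mDY}$ is well defined and positive.

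\textbf{Step 2: comparison with $\beta_k^{FR}$.} From the sufficient descent at step $k-1$,
\[
-\psi\circ\phi_{x^{k-1}}(d^{k-1})\ge \tfrac{\zeta}{\zeta+\sigma}\bigl(-\psi\circ\phi_{x^{k-1}}(v(x^{k-1}))\bigr).
\]
Combining this with Step 1 bounds the denominator of $\beta_k^{mDY}$ below by $\tfrac{\zeta(\zeta-\sigma)}{\zeta+\sigma}\bigl(-\psi\circ\phi_{x^{k-1}}(v(x^{k-1}))\bigr)$. Therefore
\[
0\le \beta_k^{mDY}\le \tfrac{\zeta+\sigma}{\zeta(\zeta-\sigma)}\,\beta_k^{FR}=:\delta\,\beta_k^{FR}.
\]

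\textbf{Step 3: conclusion and the obstacle.} If $\delta<1$, then Theorem \ref{global convergence thm for FR}(i) applies directly: Assumptions \ref{assumption 1} and \ref{assumption 2} hold, $|\beta_k|\le\delta\beta_k^{FR}$, sufficient descent holds, and strong Wolfe implies standard Wolfe because \eqref{Strong Wolfe condition 2} with negative values gives \eqref{Standard Wolfe condition 2}. This yields $\liminf\|v(x^k)\|=0$.

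The main obstacle is that $\delta<1$ is not automatic for every $\zeta>1$: for $\zeta$ close to $1$ the constant exceeds $1$. In that case I would avoid the FR reduction and instead adapt the direct argument of Theorem \ref{global convergence thm for FR}. Two estimates are needed:
\begin{itemize}
\item the product bound $\beta_k\,\psi\circ\phi_{x^{k-1}}(d^{k-1})$ comparable to $\psi\circ\phi_{x^k}(d^k)$, obtained via the DY-type identity $\psi\circ\phi_{x^k}(d^k)\le \tfrac{1}{1-l_k/\zeta}\,\psi\circ\phi_{x^k}(v(x^k))$;
\item the growth estimate $\|d^k\|^2\le 2\|v(x^k)\|^2+2\beta_k^2\|d^{k-1}\|^2$, from which $\tfrac{\|d^k\|^2}{[\psi\circ\phi_{x^k}(d^k)]^2}$ grows at most linearly in $k$ under $\|v(x^k)\|\ge\gamma$.
\end{itemize}
Linear growth makes $\sum_k [\psi\circ\phi_{x^k}(d^k)]^2/\|d^k\|^2$ diverge, which contradicts the Zoutendijk condition of Proposition \ref{Zountendijk condition proposition}. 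This telescoping is the step where the constants must be checked carefully.
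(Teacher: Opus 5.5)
Your Steps 1 and 2 are correct, and the reduction to Theorem \ref{global convergence thm for FR}(i) is valid whenever $\delta=\tfrac{\zeta+\sigma}{\zeta(\zeta-\sigma)}<1$. As you note, this fails for $\zeta$ near $1$; for instance $\sigma=0.1$, $\zeta=1.03$, the values used in the experiments. So the theorem rests on your fallback, and there the growth estimate you wrote down does not close.

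Your first estimate is the right one. Because $\psi\circ\phi_{x^k}(v(x^k))+\beta_k^{mDY}\,\psi\circ\phi_{x^k}(d^{k-1})=\zeta\beta_k^{mDY}\,\psi\circ\phi_{x^{k-1}}(d^{k-1})$ holds identically, your DY-type bound is equivalent to $\zeta\beta_k\,|\psi\circ\phi_{x^{k-1}}(d^{k-1})|\le|\psi\circ\phi_{x^k}(d^k)|$. But inserting it into $\|d^k\|^2\le 2\|v(x^k)\|^2+2\beta_k^2\|d^{k-1}\|^2$ gives only the following, with $u_k:=\|d^k\|^2/[\psi\circ\phi_{x^k}(d^k)]^2$ and some constant $C$:
\[
u_k\le C+\tfrac{2}{\zeta^2}\,u_{k-1}.
\]
This telescopes to linear growth only if $\zeta\ge\sqrt2$. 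For $1<\zeta<\sqrt2$ it permits geometric growth of $u_k$, so $\sum_k 1/u_k$ may converge and no contradiction with the Zoutendijk condition \eqref{Zountendjk condition} arises. Your two branches together therefore leave uncovered exactly the values $\zeta\in(1,\sqrt2)$ with $\delta\ge 1$. This range is nonempty for every $\sigma\in(0,1)$, since $\delta\to\tfrac{1+\sigma}{1-\sigma}>1$ as $\zeta\to1$.

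The missing idea is to tune the weight in Young's inequality to $\zeta$. Apply Lemma \ref{inequality lemma}(ii) with $\alpha=1/\sqrt{2(\zeta^2-1)}$, which gives $\|d^k\|^2\le\tfrac{\zeta^2}{\zeta^2-1}\|v(x^k)\|^2+\zeta^2\beta_k^2\|d^{k-1}\|^2$. The coefficient $\zeta^2$ cancels exactly the factor $1/\zeta^2$ from the product bound, so
\[
u_k\le\tfrac{\zeta^2}{\zeta^2-1}\,\|v(x^k)\|^2/[\psi\circ\phi_{x^k}(d^k)]^2+u_{k-1}.
\]
Sufficient descent with $c=\tfrac{\zeta}{\zeta+\sigma}$, together with $\|v(x^k)\|^2\le-2\,\psi\circ\phi_{x^k}(v(x^k))$, bounds the first term by $\tfrac{4(\zeta+\sigma)^2}{(\zeta^2-1)\gamma^2}$.

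This is precisely where $\zeta>1$ enters; the price is the factor $\tfrac{\zeta^2}{\zeta^2-1}$. The argument works for every $\zeta>1$, and it is the paper's proof. With it, your detour through the FR theorem is unnecessary.
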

\medskip
\begin{proof}
	 From Lemma \ref{mDY parameter lemma}, we observe that $\beta_k^{mDY}>0$ and $d^k$ satisfies the sufficient descent condition \eqref{sufficient descent condition} with $c:=\tfrac{\zeta}{\zeta+\sigma}$ for all $k\geq0$.
	 So, we have 
	 \begin{align}\label{proof of mDY theorem eq 1}
	  \psi\circ\phi_{x^{k}}\left(d^{k}\right)\leq \tfrac{\zeta}{\zeta+\sigma}\psi\circ\phi_{x^{k}}\left(v\left(x^{k}\right)\right)<0.	 
	 \end{align}
	 We have that 
	 \begin{align}\label{proof of mDY theorem eq 2}
	 	&\beta_k^{mDY}=\tfrac{-\psi\circ\phi_{x^k}\left(v\left(x^k\right)\right)}{\psi\circ\phi_{x^{k}}\left(d^{k-1}\right)-\zeta\psi\circ\phi_{x^{k-1}}\left(d^{k-1}\right)}\nonumber\\	
	 	\implies& \psi\circ\phi_{x^k}\left(v\left(x^k\right)\right)+\beta_k^{mDY}~\psi\circ\phi_{x^k}\left(d^{k-1}\right)=\zeta\beta_k^{mDY}\psi\circ\phi_{x^{k-1}}\left(d^{k-1}\right).
	 \end{align}
	By the definition of the conjugate direction $d^k$ given in \eqref{conjugate direction in algorithm} and the positiveness of $\beta_k^{mDY}$, we get 
	\begin{align*}
		\psi\circ\phi_{x^k}\left(d^k\right)\leq\psi\circ\phi_{x^k}\left(v\left(x^k\right)\right)+\beta_k^{mDY}~\psi\circ\phi_{x^k}\left(d^{k-1}\right)\overset{\eqref{proof of mDY theorem eq 2}}{=}\zeta\beta_k^{mDY}\psi\circ\phi_{x^{k-1}}\left(d^{k-1}\right),
	\end{align*}
	which implies that 
	\begin{align}\label{proof of mDY theorem eq 3}
		\beta_k^{mDY}\leq \tfrac{\psi\circ\phi_{x^k}\left(d^k\right)}{\zeta~\psi\circ\phi_{x^{k-1}}\left(d^{k-1}\right)} .
	\end{align}
		
If possible, assume that there exists $\gamma>0$ such that 
	\[\left\|v\left(x^k\right)\right\|\geq \gamma\text{ for all } k\geq 0.\]
		By the definition of the conjugate direction $d^k$ given in \eqref{conjugate direction in algorithm}, and from item (ii) of Lemma \ref{inequality lemma} with $p=v\left(x^k\right)$, $q=\beta_k^{mDY}\left\|d^{k-1}\right\|$, and $\alpha=\tfrac{1}{\sqrt{2\left(\zeta^2-1\right)}}$, we obtain that 
		\begin{align*}
			 \left\|d^k\right\|^2\leq \left(\left\|v\left(x^k\right)\right\|+\beta_k^{mDY}\left\|d^{k-1}\right\|\right)^2\leq\tfrac{\zeta^2}{\zeta^2-1}\left\|v\left(x^k\right)\right\|^2+\left(\beta_k^{mDY}\right)^2\zeta^2\left\|d^{k-1}\right\|^2.
		\end{align*}
		By \eqref{proof of mDY theorem eq 3}, we obtain that 
			\begin{align*}
			\left\|d^k\right\|^2\leq \tfrac{\zeta^2}{\zeta^2-1}\left\|v\left(x^k\right)\right\|^2+\tfrac{\left[\psi\circ\phi_{x^k}\left(d^k\right)\right]^2}{\left[\psi\circ\phi_{x^{k-1}}\left(d^{k-1}\right)\right]^2}\left\|d^{k-1}\right\|^2.
		\end{align*}
		By \eqref{proof of mDY theorem eq 1}, we get
		\begin{align*}
			\tfrac{\left\|d^k\right\|^2}{\left[\psi\circ\phi_{x^k}\left(d^k\right)\right]^2}\leq \tfrac{\zeta^2}{\zeta^2-1}\tfrac{\left\|v\left(x^k\right)\right\|^2}{\left[\psi\circ\phi_{x^k}\left(d^k\right)\right]^2}+\tfrac{\left\|d^{k-1}\right\|^2}{\left[\psi\circ\phi_{x^{k-1}}\left(d^{k-1}\right)\right]^2}\leq \tfrac{\left(\zeta+\sigma\right)^2}{\zeta^2-1}\tfrac{\left\|v\left(x^k\right)\right\|^2}{\left[\psi\circ\phi_{x^k}\left(v\left(x^k\right)\right)\right]^2}+\tfrac{\left\|d^{k-1}\right\|^2}{\left[\psi\circ\phi_{x^{k-1}}\left(d^{k-1}\right)\right]^2}.
		\end{align*}
		Since $0<\gamma^2\leq \left\|v\left(x^k\right)\right\|^2\leq -2~\psi\circ\phi_{x^k}\left(v\left(x^k\right)\right)$, we get 
		\begin{align*}
			\tfrac{\left\|d^k\right\|^2}{\left[\psi\circ\phi_{x^k}\left(d^k\right)\right]^2}\leq  \tfrac{4\left(\zeta+\sigma\right)^2}{\left(\zeta^2-1\right)\gamma^2}+\tfrac{\left\|d^{k-1}\right\|^2}{\left[\psi\circ\phi_{x^{k-1}}\left(d^{k-1}\right)\right]^2}. 
		\end{align*}
		Applying repeatedly, we get 
		\begin{align*}
			\tfrac{\left\|d^k\right\|^2}{\left[\psi\circ\phi_{x^k}\left(d^k\right)\right]^2}\leq  ak+b,\text{ where } a:= \tfrac{4\left(\zeta+\sigma\right)^2}{\left(\zeta^2-1\right)\gamma^2}>0 \text{ and } b:=\tfrac{\left\|v\left(x^0\right)\right\|^2}{\left[\psi\circ\phi_{x^{0}}\left(v\left(x^0\right)\right)\right]^2}>0.
		\end{align*}
		Therefore, we get 
		\begin{align*}
		\underset{k\geq0}{\sum}\tfrac{\left[\psi\circ\phi_{x^k}\left(d^k\right)\right]^2}{\left\|d^k\right\|^2}\geq	\underset{k\geq0}{\sum}\tfrac{1}{ak+b}=\infty,
		\end{align*}
		which contradicts to the Zoutendijk condition \eqref{Zountendjk condition}. Hence, the proof is completed.
\end{proof}

\section{Numerical Experiments}\label{Numerical Experiments}
This section presents the numerical behavior of Algorithm \ref{Algorithm} on several benchmark problems discussed in \cite{mondal2025steepest}. All computations were carried out using MATLAB 2023a. The experiments were performed on a system equipped with an Intel(R) Core(TM) i5-1035G1 (10th generation) processor running at 1.00 GHz–1.19 GHz, along with 8 GB of RAM.

The implementation of Algorithm \ref{Algorithm} in MATLAB uses the following settings:
\begin{itemize}
	\item The initial vector $x^0$ is generated randomly within the permissible bounds of each test instance, where the limits of the variables follow those reported in \cite{mondal2025steepest}. The MATLAB function ``rand" is used to sample this starting point.
	\item For every test problem, the parameters associated with the Wolfe line search are fixed as $\rho=0.001$ and $\sigma=0.1$. The step length $t_k$ at iteration $k$ is selected using the strong Wolfe conditions specified in \eqref{Strong Wolfe condition in algorithm}. 
	\item The quantities $v\left(x^k\right)$ and $\xi\left(x^k\right)$ at each iteration are computed by solving the optimization subproblem in \eqref{equivalent constrained problem} through MATLAB's ``quadprog" optimization toolbox.
	\item The termination of Algorithm \ref{Algorithm} follows the criterion $\xi\left(x^k\right)>-\epsilon$, where the tolerance level is chosen $\epsilon=10^{-6}$.
	\item For each iteration, $k$, we take the algorithmic parameter $\beta_k$ as $0.98\beta_k^{FR}$ for the FR method, $0.89\beta_k^{CD}$ for the CD method, and $0.81\beta_k^{DY}$ for the DY method, respectively. For the mDY method, we take $\beta_k:=\beta_k^{mDY}$ with $\zeta=1.03$.
\end{itemize}

With these considerations of parameter values, we apply Algorithm \ref{Algorithm} to each test problem given in \cite{mondal2025steepest}. To show the performance of Algorithm \ref{Algorithm} and comparison with the existing SD method \cite{mondal2025steepest} for an MIOP, we compute the iteration number and CPU time in seconds, which are given in Table \ref{performance table on iteration} and Table \ref{performance table on CPU}, respectively. Taking 100 randomly chosen initial points, we compute $\left(\min,\text{mean},\max\right)$ for both of the iteration numbers and CPU time for each test problem. Further, we depict the performance profile in Figure \ref{figure:performance profile} from the perspective of Dolan-Mor{\'e} \cite{dolan2002benchmarking} performance profile to compare the performance of Algorithm \ref{Algorithm} with the algorithm of the SD method \cite{mondal2025steepest}. Let $\mathcal{S}$ and $\mathcal{P}$ be the set of solvers and the set of problems, respectively. In addition, we assume that $N_s$ and $N_p$ are the number of solvers and the number of problems, respectively. We are interested in using average iteration numbers and average CPU time as performance measures. For each problem $p$ and solver $s$, we define \[I_{p,s}:=\text{ average number of iterations required to solve problem }p \text{ by solver }s\] and \[T_{p,s}:=\text{ average computing time required to solve problem }p \text{ by solver }s.\]
We compare the performance on problem $p$ by solver $s$ with the best performance by any solver on this problem, i.e., we define the performance ratio for average iteration numbers and average CPU time by 
\[R_{p,s}^I:=\tfrac{I_{p,s}}{\min\left\{I_{p,s}:s\in{\mathcal{S}}\right\}} \text{ and }R_{p,s}^T:=\tfrac{T_{p,s}}{\min\left\{T_{p,s}:s\in{\mathcal{S}}\right\}}.\] 
The performance profile $F_I:{\mathbb{R}}\to[0,1]$ measured by average iteration numbers and the performance profile $F_T:{\mathbb{R}}\to[0,1]$ measured by average CPU time are defined by \[F_I\left(z\right):=\tfrac{1}{N_p}\text{ size }\left\{p\in{\mathcal{P}}:R_{p,s}^I\leq z\right\} \text{ and } F_T\left(z\right):=\tfrac{1}{N_p}\text{ size }\left\{p\in{\mathcal{P}}:R_{p,s}^T\leq z\right\}\text{ for all } z\in {\mathbb{R}}.\]
Note that $F_I(z)$ and $F_T(z)$ are the probabilities for solver $s\in {\mathcal{S}}$ that the performance ratios $R_{p,s}^I$ and $R_{p,s}^T$ are within a factor $z\in{\mathbb{R}}$ of the best possible ratios, respectively. The functions $F_I$ and $F_T$ are the cumulative distribution functions for the performance ratios $R_{p,s}^I$ and $R_{p,s}^T$, respectively, which are depicted in Figure \ref{figure:performance profile}. 
	{\footnotesize
	\renewcommand{\arraystretch}{1.05}
	
	\begin{longtable}{@{} l *{5}{l} @{}}
		\caption{Performance of Algorithm \ref{Algorithm} in terms of Iteration Number \label{performance table on iteration}} \\
		
		\toprule
		\multirow{2}{*}{\text{Problem}} &
		\text{SD} & \text{FR} & \text{CD} & \text{DY} & \text{mDY} \\
		& {\footnotesize (min, mean, max)} & {\footnotesize (min, mean, max)} & {\footnotesize (min, mean, max)} & {\footnotesize (min, mean, max)} & {\footnotesize (min, mean, max)} \\
		\midrule
		\endfirsthead
		
		\toprule
		\multirow{2}{*}{\text{Problem}} &
		\text{SD} & \text{FR} & \text{CD} & \text{DY} & \text{mDY} \\
		& {\footnotesize (min, mean, max)} & {\footnotesize (min, mean, max)} & {\footnotesize (min, mean, max)} & {\footnotesize (min, mean, max)} & {\footnotesize (min, mean, max)} \\
		\midrule
		\endhead
		
		\midrule \multicolumn{6}{r}{\footnotesize Continued on next page} \\
		\endfoot
		
		\bottomrule
		\endlastfoot
		
		I-BK1 & (0, 2.49, 5) & (0, 3.07, 13) & (0, 3.21, 9) & (0, 2.83, 7) & (0, 3.45, 11) \\
		I-VU2 & (0, 27.74, 69) & (0, 228.32, 667)
		 & (0, 192.80, 571) & (0, 125.57, 313) & (0, 394.51, 1206) \\
		I-CH &  (1, 5.87, 11) & (1, 28.01, 106)
		 & (1, 13.07, 32) & (1, 10.75, 22) & (1, 21.20, 75) \\
		I-FON &   (0, 10.75, 45) & (0, 112.84, 493)
		 & (0, 46.24, 329) & (0, 48.80, 199) & (0, 114.55, 748) \\
		 I-KW2 &   (0, 19.02, 67) & (0, 82.30, 648)
		  & (0, 124.78, 1621) & (0, 86.09, 609) & (0, 135.84, 820) \\
		 I-Far1 &   (0, 4.15, 57) & (0, 18.92, 189) & (0, 21.46, 631) & (0, 30.97, 352) & (0, 20.21, 259) \\
		 I-Hil1 &   (0, 10.86, 111) & (0, 46.01, 340) & (0, 47.74, 480) & (0, 25.91, 164) & (0, 73.22, 684) \\
		 I-PNR &    (0, 6.43, 22) & (0, 43.94, 126) & (0, 23.65, 167) & (0, 17.73, 86) & (0, 44.90, 146) \\
		 I-Deb &    (0, 224.71, 20007) & (0, 13.16, 85)
		  & (0, 8.74, 79) & (0, 15.62, 105) & (0, 10.39, 92)\\
		 I-SD &   (0, 3.64, 8) & (0, 6.00, 20)
		 & (0, 4.48, 14) & (0, 4.09, 11) & (0, 5.02, 15)\\
    	I-IKK1 &   (0, 6.28, 349) & (0, 21.56, 880)
		 & (0, 35.81, 1251) & (0, 16.06, 488) & (0, 277.85, 8227)\\
		 I-VFM1 &    (0, 1.52, 4) & (0, 1.97, 6)
		 & (0, 1.65, 6) & (0, 1.46, 5) & (0, 1.88, 6)\\
		 I-MHHM2 &    (0, 3.37, 6) & (1, 8.06, 22)
		 & (1, 7.13, 14) & (0, 6.11, 12) & (0, 7.92, 21)\\
I-Viennet & (0, 1.88, 100) & (0, 3.23, 103) & (0, 1.29, 37) & (0, 1.65, 36) & (0, 4.09, 88)\\
I-AP1 &    (0, 93.98, 1569) & (0, 343.20, 3408)
& (0, 167.00, 1571) & (1, 111.50, 1095) & (0, 28.40, 237)\\
	I-MOP7 &    (25, 67.78, 125) & (2, 7.56, 15)
& (2, 7.99, 14) & (2, 8.08, 15) & (2, 8.14, 15)\\
I-VFM2 &    (0, 17.82, 34) & (0, 35.47, 2703)
& (0, 5.50, 87) & (0, 5.04, 58) & (0, 14.74, 351)\\
I-TR1 &    (8, 8.64, 10) & (4, 4.00, 4) & (5, 5.00, 5) & (8, 9.60, 12) & (8, 10.03, 13)\\
I-AP4 &    (0, 141.71, 2301) & (0, 34.88, 301)
& (0, 21.56, 209) & (0, 253.04, 5989) & (0, 328.60, 7345)\\
I-Comet &    (0, 80.01, 131) & (0, 117.40, 278)
& (0, 85.20, 386) & (0, 294.80, 602) & (0, 425.00, 903)\\
	\end{longtable}
	
	\vspace{1cm}
	
	\begin{longtable}{@{} l *{5}{l} @{}}
		\caption{Performance of Algorithm \ref{Algorithm} in terms of CPU Time (s)\label{performance table on CPU}} \\
		
		\toprule
		\multirow{2}{*}{\text{Problem}} &
		\text{SD} & \text{FR} & \text{CD} & \text{DY} & \text{mDY} \\
		& {\footnotesize (min, mean, max)} & {\footnotesize (min, mean, max)} & {\footnotesize (min, mean, max)} & {\footnotesize (min, mean, max)} & {\footnotesize (min, mean, max)} \\
		\midrule
		\endfirsthead
		
		\toprule
		\multirow{2}{*}{\text{Problem}} &
		\text{SD} & \text{FR} & \text{CD} & \text{DY} & \text{mDY} \\
		& {\footnotesize (min, mean, max)} & {\footnotesize (min, mean, max)} & {\footnotesize (min, mean, max)} & {\footnotesize (min, mean, max)} & {\footnotesize (min, mean, max)} \\
		\midrule
		\endhead
		
		\midrule \multicolumn{6}{r}{\footnotesize Continued on next page} \\
		\endfoot
		
		\bottomrule
		\endlastfoot
		
		I-BK1 &  (0.02, 0.07, 0.12) & (0.02, 0.09, 0.30) & (0.02, 0.09, 0.21) & (0.02, 0.08, 0.17) & (0.02, 0.09, 0.24) \\
		IVU2  & (0.02, 0.56, 1.36) & (0.02, 5.07, 15.93) & (0.02, 5.27, 16.14) & (0.02, 3.03, 15.59) & (0.02, 8.80, 35.29) \\
		I-CH &  (0.04, 0.13, 0.28) & (0.04, 0.61, 2.25) & (0.04, 0.30, 0.71) & (0.04, 0.25, 0.50) & (0.04, 0.47, 1.58) \\
		I-FON &   (0.02, 0.23, 0.89) & (0.02, 2.46, 10.62) & (0.02, 1.00, 6.95) & (0.02, 1.07, 4.27) & (0.02, 2.80, 16.35) \\
		I-KW2 &   (0.02, 0.39, 1.31) & (0.02, 1.87, 14.49) & (0.02, 3.25, 36.36) & (0.02, 1.93, 13.97) & (0.02, 3.21, 18.18) \\
		I-Far1 &    (0.02, 0.10, 1.10) & (0.02, 0.43, 4.07) & (0.02, 0.50, 14.19) & (0.02, 0.74, 7.65) & (0.02, 0.49, 5.98) \\
		I-Hil1 &    (0.02, 0.23, 2.15) & (0.02, 1.02, 7.32) & (0.02, 1.05, 10.43) & (0.02, 0.58, 3.50) & (0.02, 1.61, 14.81) \\
		I-PNR &   (0.02, 0.14, 0.44) & (0.02, 0.97, 2.76) & (0.02, 0.53, 3.58) & (0.02, 0.40, 1.91) & (0.02, 1.00, 3.10) \\
		I-Deb &    (0.02, 4.63, 413.67) & (0.02, 0.30, 1.86) & (0.02, 0.20, 1.69) & (0.02, 0.60, 4.95) & (0.02, 0.24, 2.01) \\
		I-SD &    (0.02, 0.09, 0.17) & (0.02, 0.21, 0.82)
		& (0.02, 0.12, 0.36) & (0.02, 0.11, 0.25) & (0.02, 0.13, 0.38)\\
		I-IKK1 &    (0.02, 0.14, 6.86) & (0.02, 0.48, 18.73)
		& (0.02, 0.78, 26.26) & (0.02, 0.36, 10.39) & (0.02, 6.44, 193.81)\\
		 I-VFM1 &    (0.02, 0.05, 0.13) & (0.02, 0.14, 6.60)
		& (0.02, 0.07, 0.19) & (0.02, 0.05, 0.18) & (0.02, 0.07, 0.21)\\
		I-MHHM2 &    (0.02, 0.09, 0.16) & (0.04, 0.19, 0.49)
		& (0.04, 0.19, 0.52) & (0.02, 0.15, 0.28) & (0.02, 0.18, 0.43)\\
		I-Viennet &    (0.02, 0.06, 1.96) & (0.02, 0.09, 2.11)
		& (0.02, 0.05, 0.73) & (0.02, 0.06, 0.86) & (0.02, 0.12, 2.10)\\
		I-AP1 &    (0.02, 1.81, 30.15) & (0.02, 7.16, 70.86)
		& (0.02, 3.98, 36.89) & (0.04, 2.23, 21.73) & (0.02, 0.58, 4.71)\\
		I-MOP7 &    (0.48, 1.30, 2.43) & (0.06, 0.17, 0.31)
		& (0.06, 0.18, 0.30) & (0.06, 0.18, 0.31) & (0.06, 0.18, 0.31)\\
	I-VFM2 &    (0.02, 0.45, 2.19) & (0.02, 0.73, 53.82)
		& (0.02, 0.13, 1.77) & (0.02, 0.12, 1.20) & (0.02, 0.31, 6.95)\\
		I-TR1 &    (0.02, 0.20, 0.39) & (0.10, 0.10, 0.13)
		&(0.12, 0.13, 0.15) & (0.18, 0.22, 0.27) & (0.18, 0.23, 0.29)\\
		I-AP4 &    (0.02, 2.92, 55.34) & (0.02, 0.72, 6.06)
		& (0.02, 0.46, 4.22) & (0.02, 5.10, 120.28) & (0.02, 6.64, 147.83)\\
		I-Comet &    (0.02, 1.60, 2.94) & (0.02, 2.41, 5.85)
		& (0.02, 1.74, 7.79) & (0.02, 5.97, 12.12) & (0.02, 8.54, 18.11)\\
		
	\end{longtable}
}
	
		\begin{figure}[htbp]
		\begin{subfigure}[t]{0.45\textwidth}
			\includegraphics[width=\linewidth]{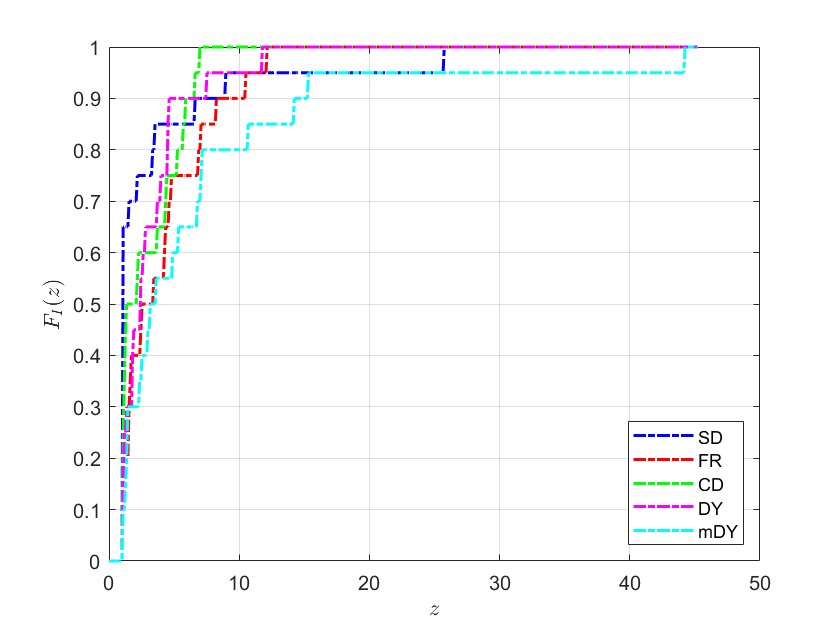}
			\caption{Performance profile of the methods: SD, FR, CD, DY, and mDY measured by average iteration numbers}
			\label{iteration}
		\end{subfigure}\hfill
		\begin{subfigure}[t]{0.45\textwidth}
			\includegraphics[width=\linewidth]{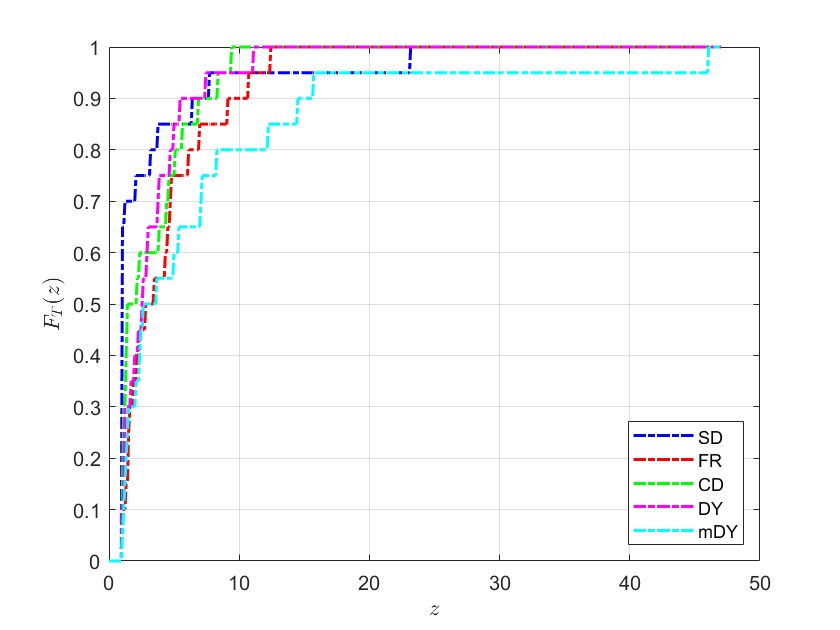}
			\caption{Performance profile of the methods: SD, FR, CD, DY, and mDY measured by average CPU time}
			\label{CPU time}
		\end{subfigure}
		\caption{Performance profile.}
		\label{figure:performance profile}
	\end{figure}
	From Table \ref{performance table on iteration} and Table \ref{performance table on CPU}, we have the following observations:
	\begin{itemize}
		\item Among FR, CD, DY, and mDY methods, the FR method gives the best performance for the test problems: I-KW2, I-Far1, I-MOP7, and I-TR1.
		\item Among FR, CD, DY, and mDY methods, the CD method gives the best performance for the test problems: I-FON, I-Deb, I-Viennet, I-AP4, and I-Commet.
		\item The DY method gives the best performance for the test problems: I-BK1, I-VU2, I-CH, I-Hil1, I-PNR, I-SD, I-IKK1, I-VFM1, I-MHHM2, and I-VFM2.
		\item The mDY method gives the best performance for the test problem I-AP1.
	\end{itemize}
	For a particular test problem, the method that gives the best performance among the methods FR, CD, DY, and mDY is used. For example, we use the DY method for the test problem I-VU2.

	\bigskip
	\noindent
	For each biobjective benchmark problem described in \cite{mondal2025steepest}, the corresponding attainable objective region is depicted in Figure \ref{figure:biobjective}. Additionally, using five randomly selected starting points, the images of the optimal solutions $G\left(x^\star\right)$ are also shown within these regions. The light blue shaded portion indicates the collection of all achievable objective rectangles, while each magenta colored rectangle highlights the images of the optimal solutions $G\left(x^\star\right)$. To approximate the feasible objective domain, $5000$ random points are sampled from the variable bounds of each test instance. Each sample $x$ produces a rectangle $G(x)$, and the union of all such rectangles represents the overall objective region, i.e.,
	\[\text{objective feasible region}:=\underset{lb\leq x \leq ub}{\bigcup}G(x).\]
	The black dot marks the center of $G\left(x^0\right)$, the blue dot marks the center of $G\left(x^\star\right)$, and the two are connected with a magenta segment.
	
	\bigskip
	\noindent
	For triobjective cases, direct visualization of the full feasible objective region is impractical. Therefore, for a single randomly selected initial point $x^0$, we only display the image of the starting point $G\left(x^0\right)$
	and the corresponding image of the optimal point $G\left(x^\star\right)$ in Figure \ref{figure:triobjective}. Here, the cyan colored cube corresponds to $G\left(x^0\right)$, and the magenta colored cube represents $G\left(x^\star\right)$. The magenta colored path joining the black and blue dots illustrates the progression of the centers of the generated sequence $\left\{G\left(x^k\right)\right\}$ produced by Algorithm \ref{Algorithm}.
	
	\begin{figure}[htbp]
		\begin{subfigure}[t]{0.33\textwidth}
			\includegraphics[width=\linewidth]{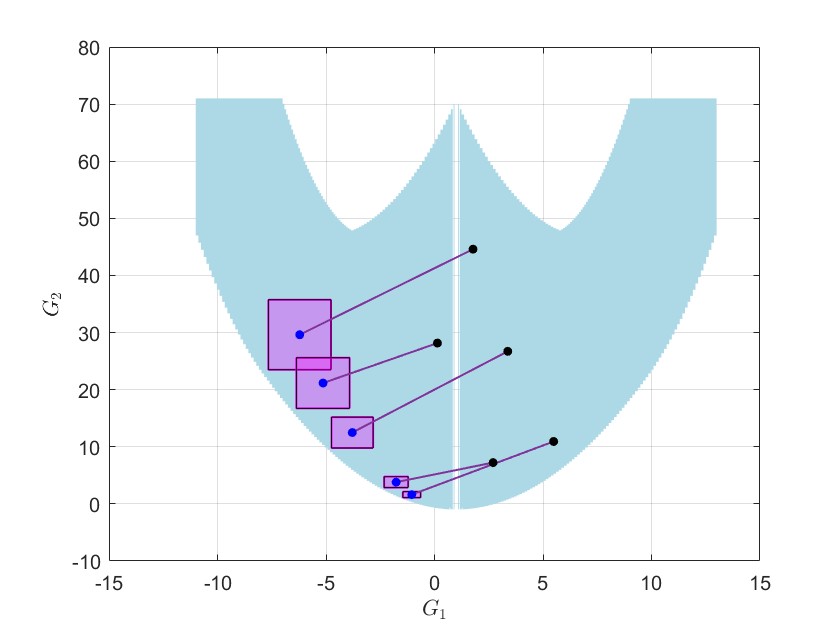}
			\caption{I-VU2}
			\label{fig:I-VU2}
		\end{subfigure}\hfill
		\begin{subfigure}[t]{0.33\textwidth}
			\includegraphics[width=\linewidth]{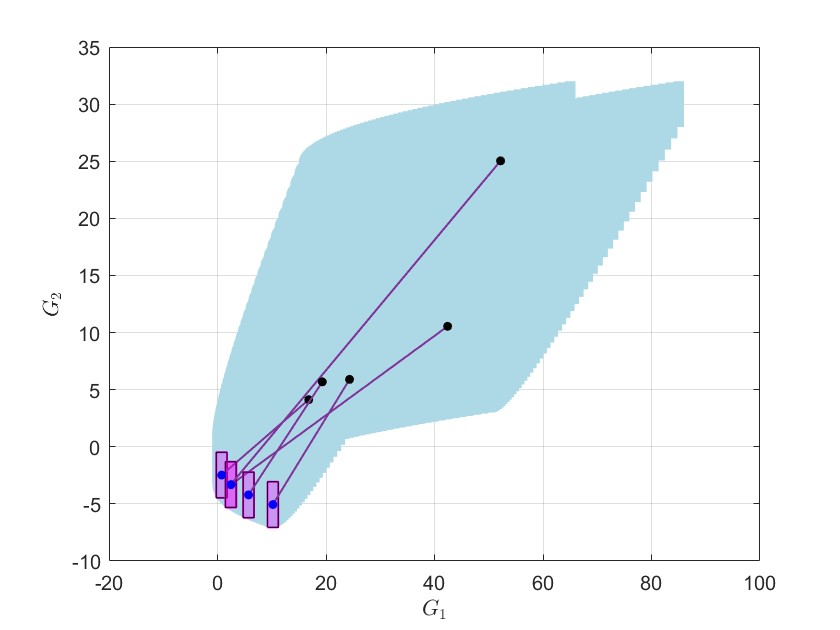}
			\caption{I-CH}
			\label{fig:I-CH}
		\end{subfigure}\hfill
		\begin{subfigure}[t]{0.33\textwidth}
			\includegraphics[width=\linewidth]{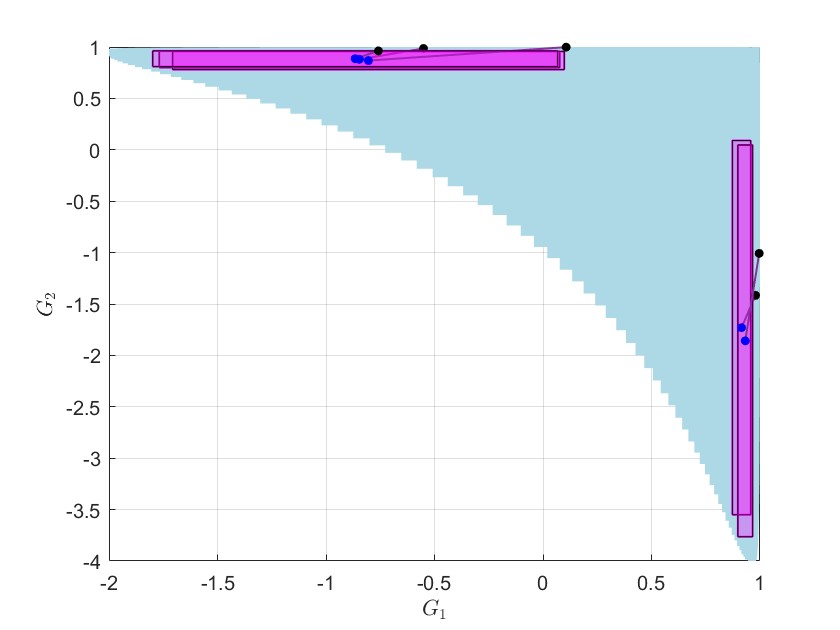}
			\caption{I-FON}
			\label{fig:I-FON}
		\end{subfigure}
		
		\begin{subfigure}[t]{0.33\textwidth}
			\includegraphics[width=\linewidth]{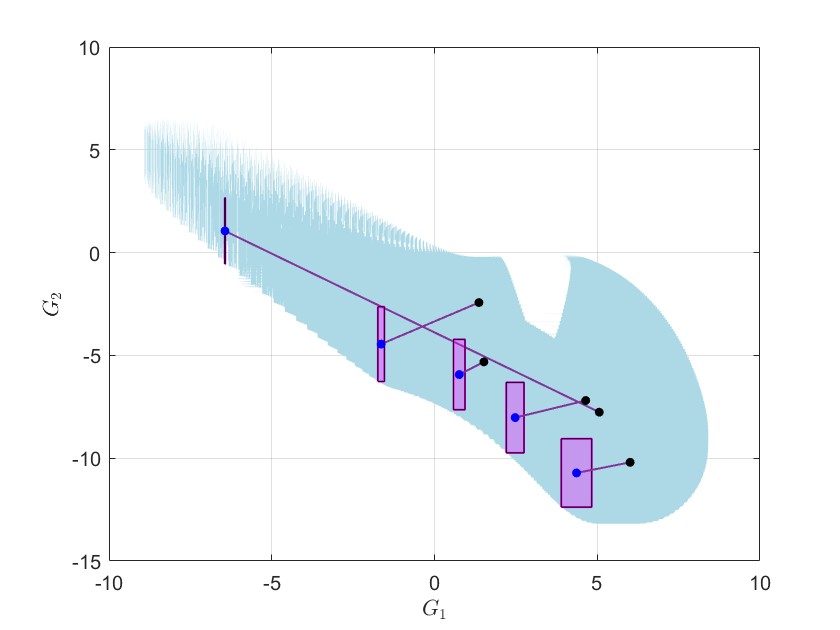}
			\caption{I-KW2}
			\label{fig:I-KW2}
		\end{subfigure}\hfill
		\begin{subfigure}[t]{0.33\textwidth}
			\includegraphics[width=\linewidth]{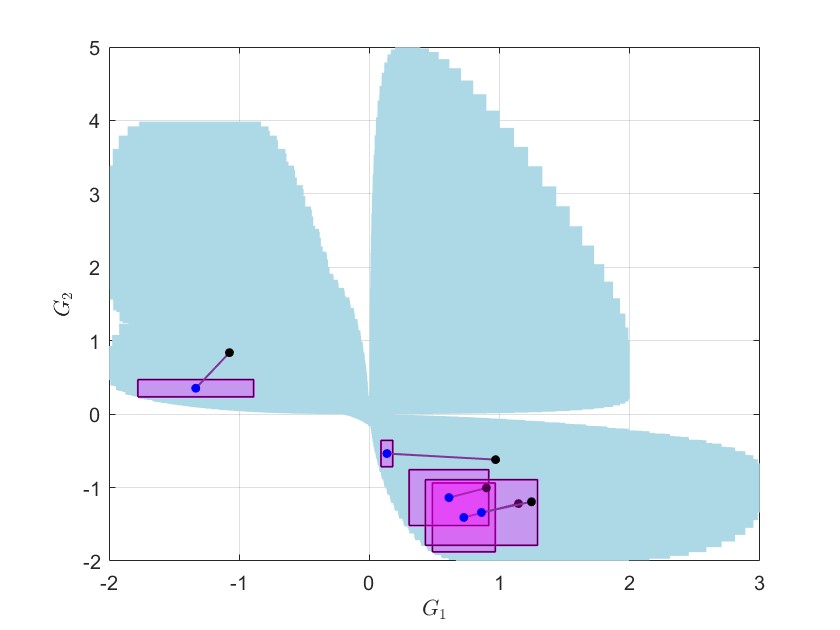}
			\caption{I-Far1}
			\label{fig:I-Far1}
		\end{subfigure}\hfill
		\begin{subfigure}[t]{0.33\textwidth}
			\includegraphics[width=\textwidth]{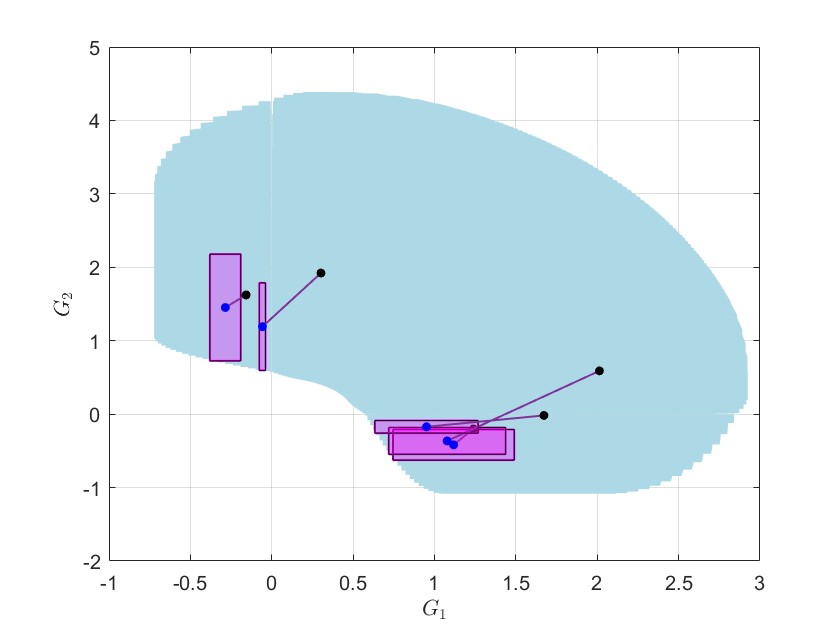}
			\caption{I-Hil1}
			\label{fig:I-Hil1}
		\end{subfigure}
		
		\begin{subfigure}[t]{0.33\textwidth}
			\includegraphics[width=\linewidth]{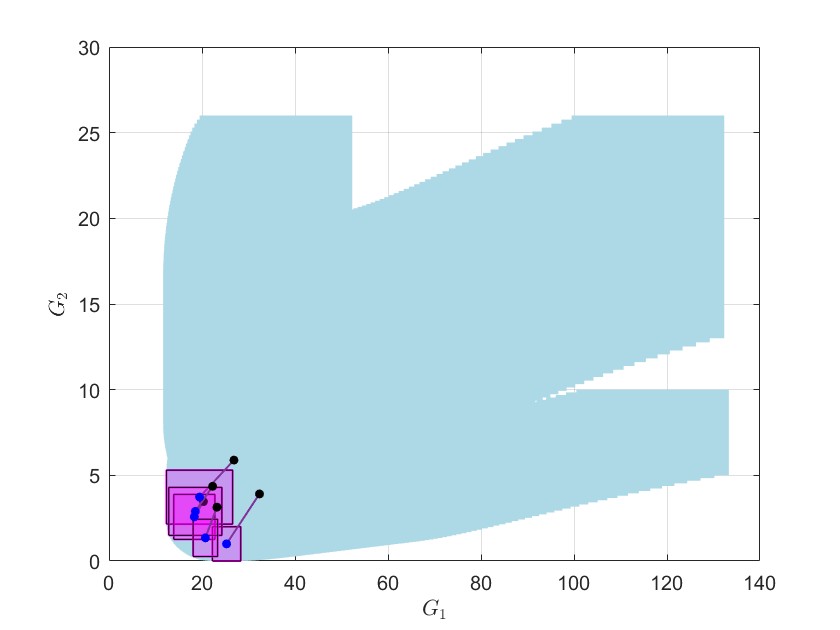}
			\caption{I-PNR}
			\label{fig:I-PNR}
		\end{subfigure}\hfill
		\begin{subfigure}[t]{0.33\textwidth}
			\includegraphics[width=\linewidth]{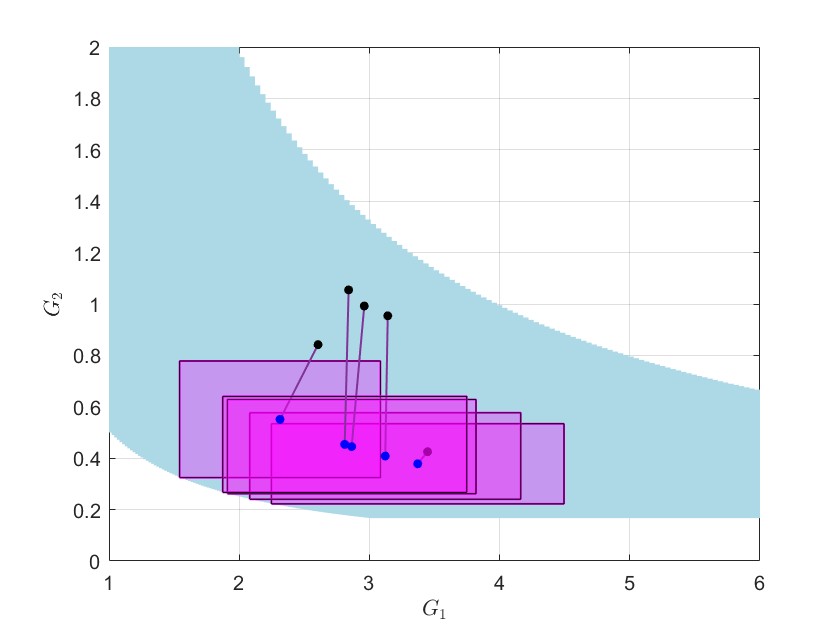}
			\caption{I-Deb}
			\label{fig:I-Deb}
		\end{subfigure}\hfill
		\begin{subfigure}[t]{0.33\textwidth}
			\includegraphics[width=\linewidth]{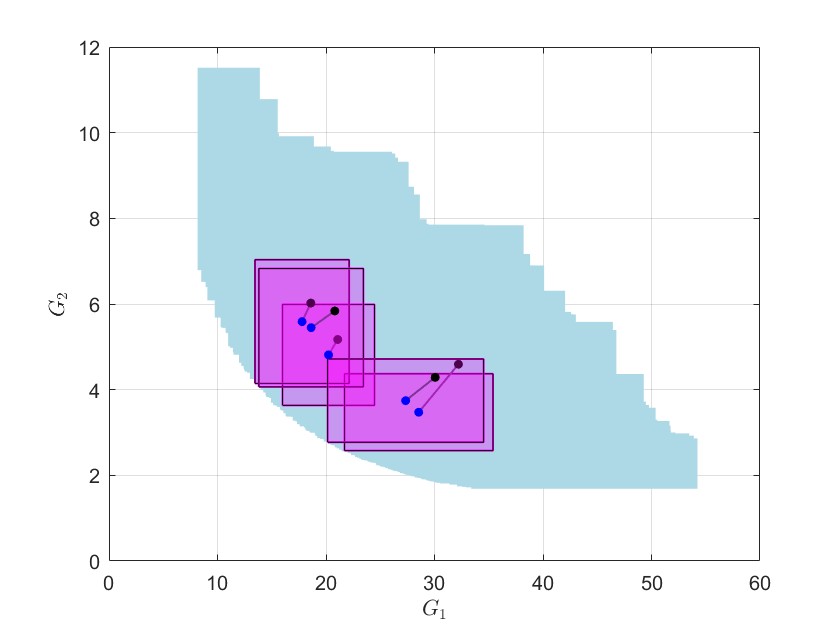}
			\caption{I-SD}
			\label{fig:I-SD}
		\end{subfigure}
		
		\caption{For five randomly selected starting points, the locations of $G\left(x^\star\right)$ within the attainable objective region for each biobjective test problem.}
		\label{figure:biobjective}
	\end{figure}
	
	\begin{figure}[htbp]
		\begin{subfigure}[t]{0.33\textwidth}
			\includegraphics[width=\linewidth]{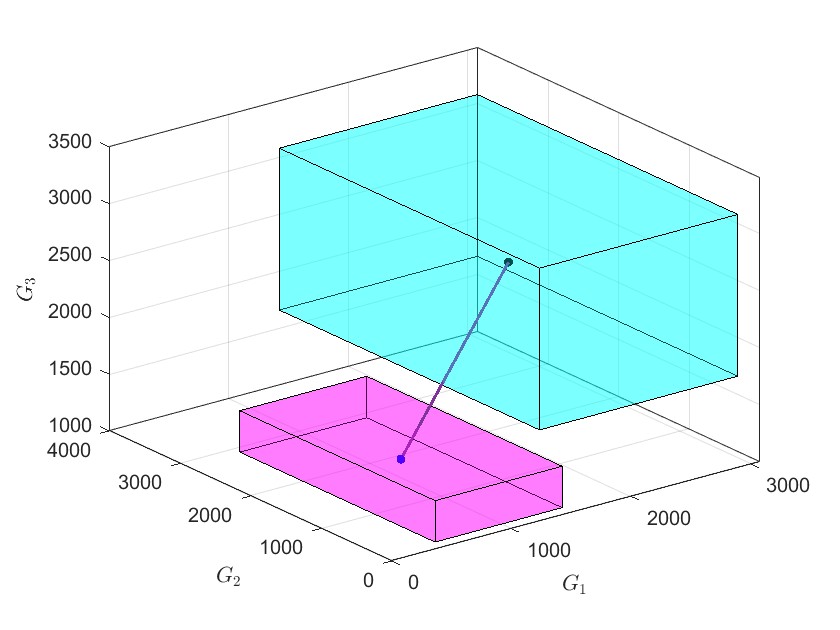}
			\caption{I-IKK1}
			\label{fig:I-IKK1}
		\end{subfigure}\hfill
		\begin{subfigure}[t]{0.33\textwidth}
			\includegraphics[width=\linewidth]{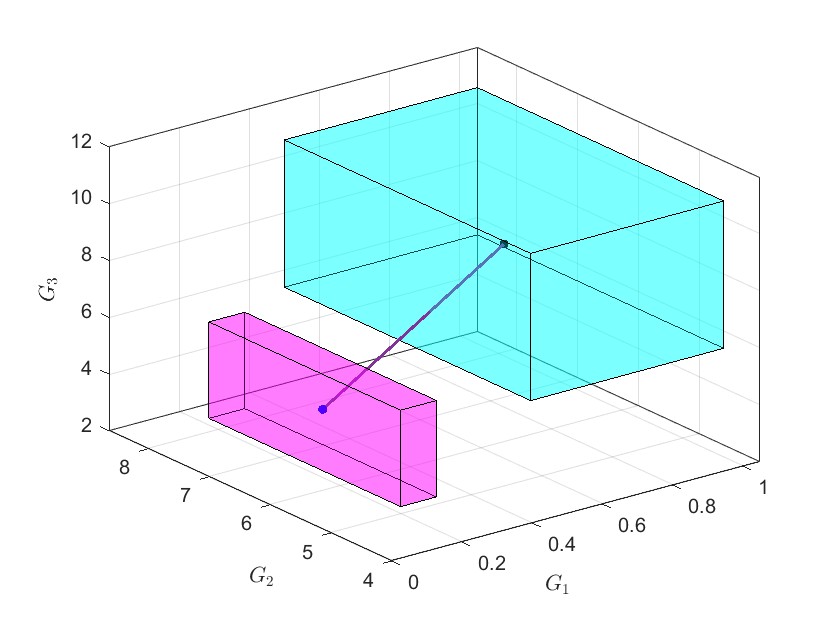}
			\caption{I-VFM1}
			\label{fig:I-VFM1}
		\end{subfigure}\hfill
		\begin{subfigure}[t]{0.33\textwidth}
			\includegraphics[width=\linewidth]{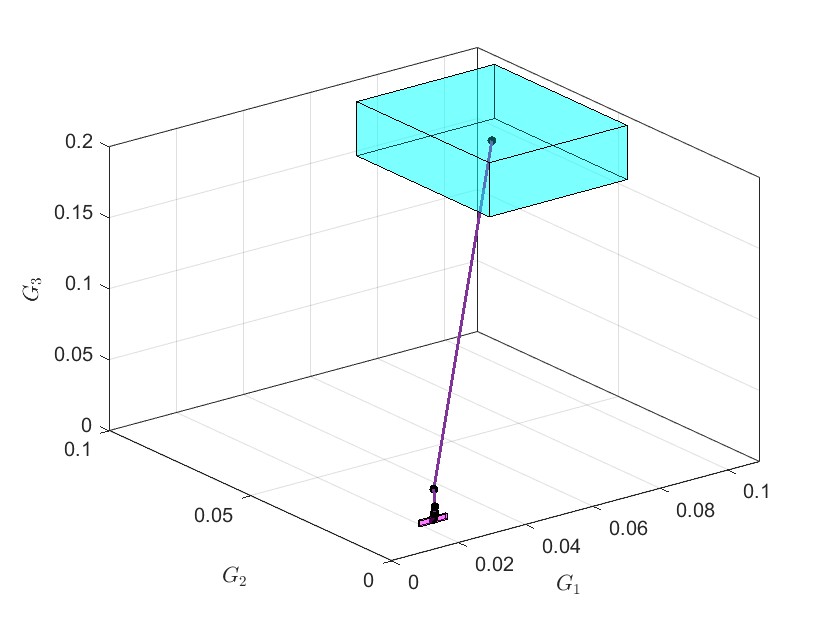}
			\caption{I-MHHM2}
			\label{fig:I-MHHM2}
		\end{subfigure}
		
		\begin{subfigure}[t]{0.33\textwidth}
			\includegraphics[width=\linewidth]{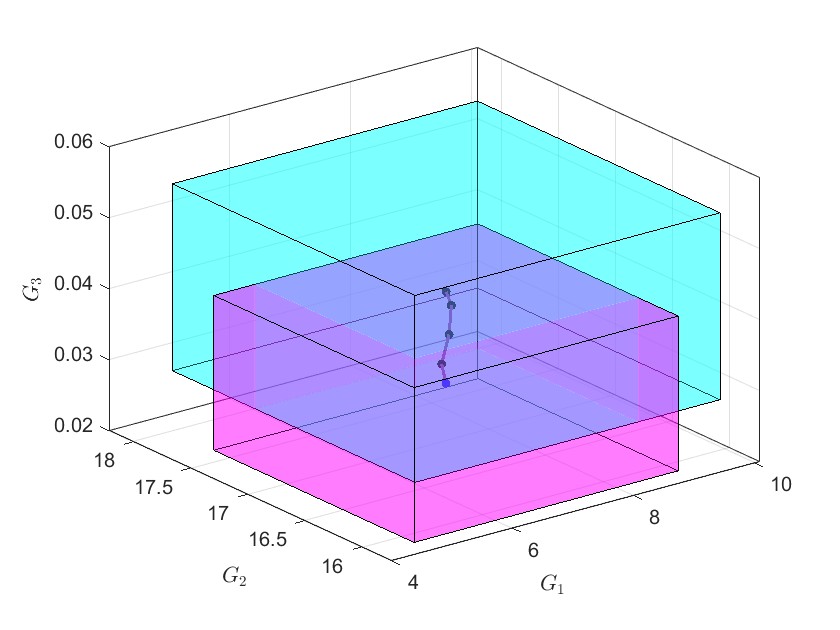}
			\caption{I-Viennet}
			\label{fig:I-Viennet}
		\end{subfigure}\hfill
		\begin{subfigure}[t]{0.33\textwidth}
			\includegraphics[width=\linewidth]{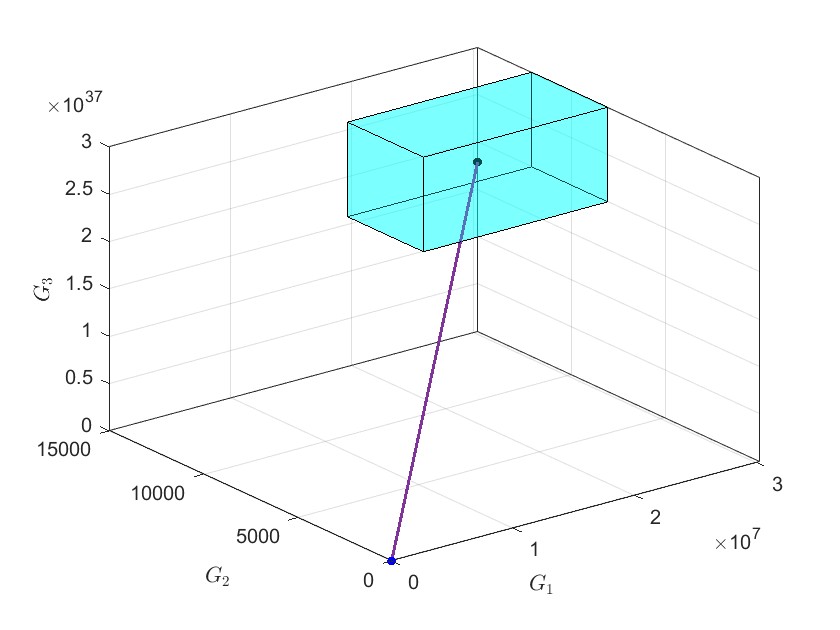}
			\caption{I-AP1}
			\label{fig:I-AP1}
		\end{subfigure}\hfill
		\begin{subfigure}[t]{0.33\textwidth}
			\includegraphics[width=\textwidth]{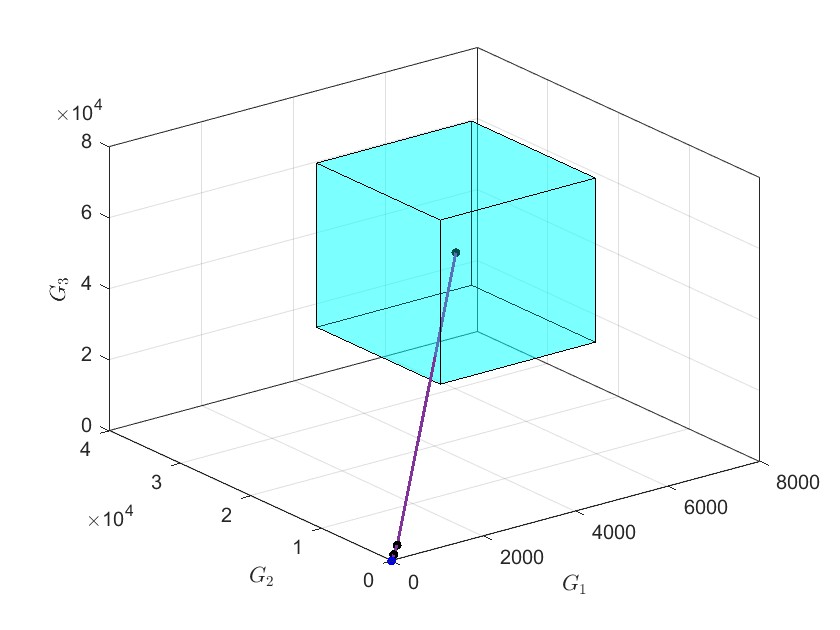}
			\caption{I-MOP7}
			\label{fig:I-MOP7}
		\end{subfigure}
		
		\begin{subfigure}[t]{0.33\textwidth}
			\includegraphics[width=\linewidth]{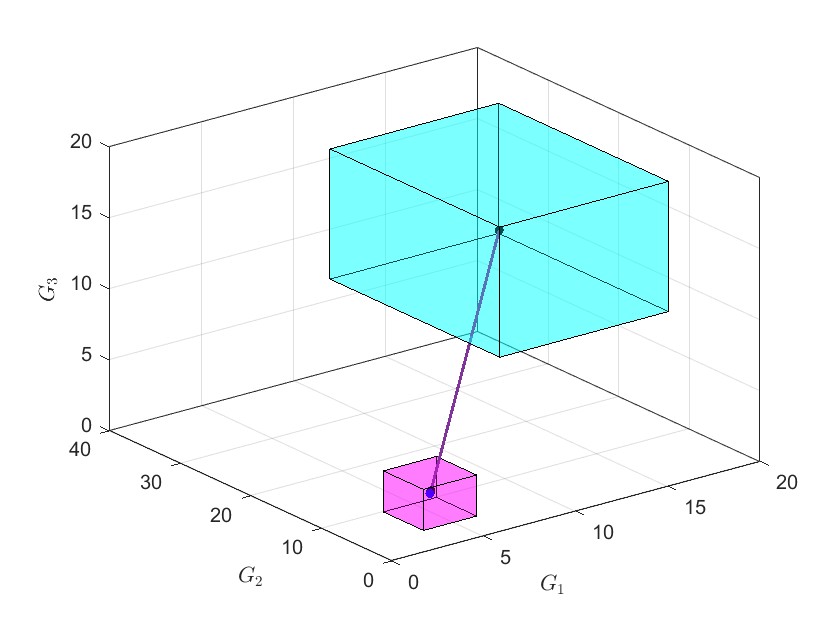}
			\caption{I-VFM2}
			\label{fig:I-VFM2}
		\end{subfigure}\hfill
		\begin{subfigure}[t]{0.33\textwidth}
			\includegraphics[width=\linewidth]{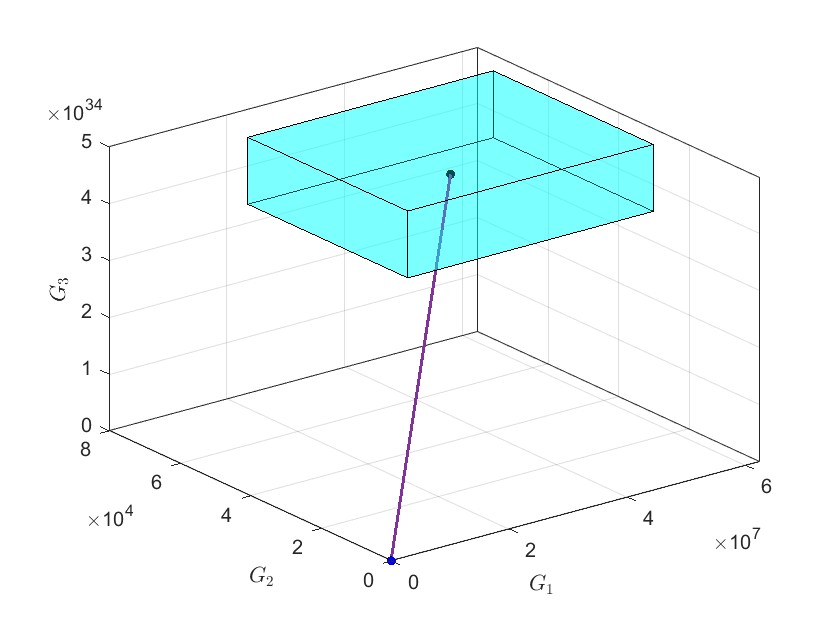}
			\caption{I-AP4}
			\label{fig:I-AP4}
		\end{subfigure}\hfill
		\begin{subfigure}[t]{0.33\textwidth}
			\includegraphics[width=\linewidth]{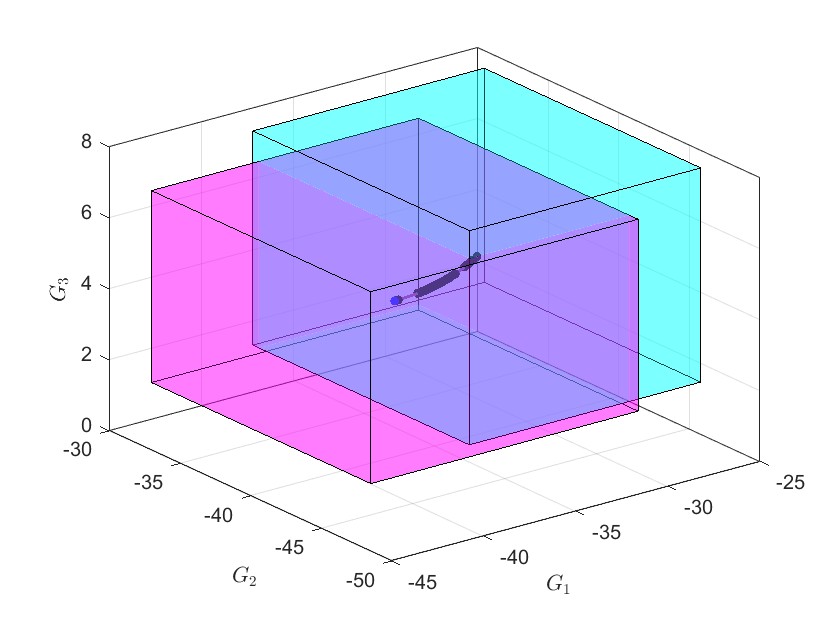}
			\caption{I-Comet}
			\label{fig:I-Comet}
		\end{subfigure}
		
		\caption{For a randomly selected starting point, the location of $G\left(x^0\right)$ and $G\left(x^\star\right)$ of the triobjective test problems.}
		\label{figure:triobjective}
	\end{figure}
\clearpage

\section{Conclusion and Future Directions}\label{Conclusion and Future Directions}
In this paper, we have studied the nonlinear conjugate gradient method for an unconstrained MIOP. First, we have defined the standard Wolfe conditions and the strong Wolfe conditions. Further, we have shown the existence of an interval of step length that satisfies the standard Wolfe conditions and the strong Wolfe conditions (Proposition \ref{existence of an interval of steplength}). Based on this line search, we have proposed an algorithm of the nonlinear conjugate method to find a Pareto critical point (Algorithm \ref{Algorithm}). To study the convergence analysis of Algorithm \ref{Algorithm}, we have derived the result related to the Zoutendijk conditions (Proposition \ref{Zountendijk condition proposition}). Based on this result, we have proved the global convergence result of Algorithm \ref{Algorithm} without imposing an explicit restriction on the algorithmic parameter $\beta_k$ (Theorem \ref{General convergence theorem}). Further, we have studied the global convergence results of Algorithm \ref{Algorithm} for different choices of the algorithmic parameter $\beta_k$. We have used four different choices of the algorithmic parameter: $\beta_k^{FR}$, $\beta_k^{CD}$, $\beta_k^{DY}$, and $\beta_k^{mDY}$. For each variant of the algorithmic parameter $\beta_k$, we have proved the global convergence results of Algorithm \ref{Algorithm} (Theorem \ref{global convergence thm for FR}, Theorem \ref{global convergence thm for CD parameter}, Theorem \ref{global convergence thm for DY parameter}, and Theorem \ref{mDY thm}). Finally, we have tested the proposed Algorithm \ref{Algorithm} through some test problems refer to \cite{mondal2025steepest}. In addition, we have shown the performance of Algorithm \ref{Algorithm} and comparison with the existing SD method \cite{mondal2025steepest} for an MIOP from the perspective of Dolan-Mor{\'e} \cite{dolan2002benchmarking} performance profile. It has been observed that the DY method is the best for most of the test problems among the FR, CD, DY, and mDY methods.

In this paper, we have used the Wolfe line search procedure to find the step length. Further, we will focus on the Armijo-like line search to find the step length. Consequently, we will extend the method and the convergence results. In this study, we have not considered the other variants of the algorithmic parameter $\beta_k$, such as PRP and HS variants. For future research, we will aim to study the nonlinear conjugate gradient method for PRP and HS variants.

\bigskip

\noindent {\bf Acknowledgments} 
The authors acknowledge the Science and Engineering Research Board, India, for supporting this study through the Core Research Grant (CRG/2022/001347). \\

\noindent {\bf Data Availability}\\ 
No data was used for the research described in the article. \\ 

\noindent {\bf Disclosure Statement} \\ 
The authors do not have any conflicts of interest.

\end{document}